\newcounter{constant}
\newcounter{bigconstant}
\newcommand{\jap}[1]{\langle #1\rangle}
\newtheorem{theorem}{Theorem}[section]
\newtheorem{lemma}[theorem]{Lemma}
\newtheorem{corollary}[theorem]{Corollary}
\numberwithin{equation}{section} 
\theoremstyle{definition}
\newtheorem{remark}[theorem]{Remark}
\newcommand{\R}{\mathbb{R}}
\newcommand{\N}{\mathbb{N}}
\keywords{Korteweg-de Vries, nonlinear Schrödinger, radius of analyticity, nonlinear smoothing.}
\subjclass[2020]{35A20, 35B40, 35Q53, 35Q55}
\begin{document}
\title[Nonlinear smoothing implies bounds on radius of analyticity]{Nonlinear smoothing implies improved lower bounds on the radius of spatial analyticity for nonlinear dispersive equations}
    \author[M. Baldasso and S. Correia]{ Mikaela Baldasso and Simão Correia}

\maketitle

\begin{abstract}
We provide a roadmap to establish improved lower bounds on the decay rate of the uniform radius of analyticity $\sigma(T)$ for a given nonlinear dispersive equation, reducing the problem to the derivation of nonlinear smoothing estimates with a specific distribution of extra derivatives. We apply this strategy for both the defocusing generalized KdV and the nonlinear Schrödinger equations with odd pure-power nonlinearity. For both equations, we reach the lower bound $\sigma(T)\gtrsim T^{-\frac{1}{2}-\epsilon}$, for any $\epsilon>0$, thus improving all available results in the current literature.

\end{abstract}


\section{Introduction}\label{s:intro}

\subsection{Setting of the problem and main results} In this work, we consider the initial value problems (IVPs) associated to the defocusing generalized Korteweg-de Vries  equation
\begin{equation}\label{eq:gKdV}\tag{gKdV}
\begin{cases}
\partial_{t}u+\partial_{x}^{3}u-u^{k}\partial_{x}u=0, \, &x\in \R, \, t \in \R,\\
u(x,0)=u_0(x),
\end{cases}
\end{equation}
where $u$ is a real-valued function and $k \geq 4$ is even, and to the nonlinear Schrödinger equation 
\begin{equation}\label{eq:schrodinger}\tag{NLS}
\begin{cases}
i\partial_{t}v+\partial_{x}^{2}v-|v|^{p-1}v=0, \, &x\in \R, \, t \in \R,\\
v(x,0)=v_0(x),
\end{cases}
\end{equation}
where $v$ is a complex-valued function and $p\geq 3$ is odd.


In the last fifteen years, there has been a growing interest in extending the well-posedness theory for nonlinear dispersive equations with given data in the Gevrey spaces $G^{\sigma,s}(\R)$, see for example \cite{BalPan, BonaGruKal, FigPan, GruKal, Hayashi_Analyt, Selberg_DKG, SelDaSilva, SelTes}. For $\sigma>0$ and $s\in \R$, the Gevrey space $G^{\sigma,s}(\R)$ is defined as
\begin{equation}\label{eq:Gevrey1}
    G^{\sigma,\,s}(\R)=\left\lbrace f \in L^2(\R):\, \|f\|_{G^{\sigma,\,s}(\R)}^2=\int_{\R} e^{2\sigma|\xi|}\langle \xi\rangle^{2s}|\widehat{f}(\xi)|^2d\xi <\infty\right\rbrace,
\end{equation}
where $\langle \xi \rangle = \sqrt{1+|\xi|^2}$ and $\widehat{f}$ denotes the spatial Fourier transform of $f$,
\begin{equation*}
    \widehat{f}(\xi)=\frac{1}{\sqrt{2\pi}}\int_{\R}e^{-ix\xi}f(x)dx.
\end{equation*}

According to the Paley-Wiener Theorem (see \cite{katznelson}), a function $f$ belongs to $G^{\sigma,s}(\R)$ if and only if it can be holomorphically extended in the strip $S_{\sigma} =\{x+iy:\, x,\, y\in \R, \, |y|<\sigma\}$ to a function $F$ satisfying $\sup_{|y|<\sigma}\|F(x+iy)\|_{H_x^s}<\infty$. In other terms, the parameter $\sigma>0$ determines the width of the complex strip where the functions in $G^{\sigma,s}(\R)$ can be extended. For this reason, it is often referred to as the uniform radius of analyticity of $f$. In this context, in addition to the local well-posedness in $G^{\sigma,s}(\R)$, an important question concerns the asymptotic behaviour of $\sigma$ when local solutions can be globally extended in time. More precisely, we aim to obtain lower bounds on $\sigma(T)$, the uniform radius of analyticity of the solution at time $T$.

In the context of the generalized KdV equation \eqref{eq:gKdV}, Bona, Gruji{\'c} and Kalisch \cite{BonaGruKal, GruKal}  proved well-posedness results for the IVP \eqref{eq:gKdV} with data in the Gevrey space $G^{\sigma,s}(\R)$ with $s>\frac{3}{2}$. In \cite{BonaGruKal}, the authors also obtained $\sigma(T) \gtrsim T^{-12}$ for $k=1,2$ and $\sigma(T) \gtrsim T^{-(k^2+3k+2)}$  for $k\geq 3$. For $k=1$, using an almost-conservation quantity at the $L^2$-level, Selberg and Silva \cite{SelDaSilva} obtained the bound $\sigma(T)\gtrsim T^{-(4/3+\epsilon)}$ (see also \cite{Tasf_kdv}). This result was further improved to $\sigma(T)\gtrsim T^{-1/4}$ by Huang and Wang in \cite{HuangWang}. In the case $k=2$, Figueira and Panthee \cite{FigPan} derived the bound $\sigma(T)\gtrsim T^{-4/3}$ and in \cite{FigPan_mkdv} this result was improved to $\sigma(T)\gtrsim T^{-1/2}$ by using the $H^2$-level conservation law. For $k=3$, using once again an almost-conserved quantity at the $L^2$-level, the authors in \cite{SelTes} obtained the lower bound $\sigma(T)\gtrsim T^{-2}$. Recently, the first author and Panthee \cite{BalPan} improved the results for $k\ge 4$, proving local well-posedness for $s>\frac{k-4}{2k}$ and reaching $\sigma(T) \gtrsim T^{-\left(\frac{2k}{k+4}+\epsilon\right)}$.

For the \eqref{eq:schrodinger} equation, Tesfahun \cite{tas_nls} established the local well-posedness in $G^{\sigma,1}(\R^d)$ for $p = 3$ and spatial dimensions $d=1,2,3$. Moreover, he extended the local solutions in time obtaining $\sigma(T) \gtrsim T^{-1}$ as a lower bound for the radius of spatial analyticity of the solutions. For the one- and two-dimensional cases, in \cite{AKS}, Ahn, Kim and Seo extended Tesfahun’s result to any odd integer $p > 3$. More recently, Belayneh and Getachew \cite{BirGet} improved these results obtaining sharper lower bounds: $\sigma(T) \gtrsim T^{-\frac{4}{5}}$ for $p=3$ and $\sigma(T) \gtrsim T^{-\frac{2}{3}}$ for $p>3$. For related results in the context of systems of Schrödinger equations with quadratic nonlinearities, see \cite{FNP, Sato}.

A common feature of the referred literature is that each result relies on \textit{ad hoc} combinations of dispersive estimates (such as Kato smoothing, maximal function and Strichartz estimates), with no clear insight on the optimality of the subsequent lower bound on $\sigma(T)$. In this work, we show how the lower bounds on the radius of analyticity can be \emph{systematically} reduced to specific nonlinear smoothing estimates (see Sections \ref{sec:gKdV} and \ref{sec:NLS}). Heuristically, these estimates translate the phenomenon in which the nonlinear part of the evolution is actually smoother than the initial data. This effect has been studied extensively in the past twenty years \cite{COS,  tzirakis2, tzirakis3, tzirakis1, IMOS, KeraaniVargas} and has been a source of progress in other topics in nonlinear dispersive equations, such as dispersive blow-up \cite{BPSS, BonaSaut, BonaSaut2, LinPasSil}, global well-posedness at low regularity \cite{CKSTT2, CKSTT1, C_4kdv, Farah_5kdv,  GPS, MSWX} and even local well-posedness \cite{CLS}.

\medskip

We now state the main results of this work. We begin with the \eqref{eq:gKdV} equation.

\begin{theorem}\label{teo:globalkdv}
    Let $\sigma_0>0$, $k\geq 4$ be even and $u_0 \in G^{\sigma_0,\, 1}(\R)$. Then, for any $T\geq 0$, the local solution\footnote{The existence of local solution is ensured by Theorem \ref{teo:localkdv} (see also \cite{BalPan}).} $u$ to \eqref{eq:gKdV} extends to the time interval $[-T,T]$ and satisfies
    \begin{equation*}
        u\in C([-T,\,T]:G^{\sigma(T),\, 1}(\R)), \quad \text{with } \sigma(T) \geq \min \left\lbrace\sigma_0 , \, cT^{-\left(\frac{1}{2}+\epsilon \right)}\right\rbrace,
    \end{equation*}
where $\epsilon>0$ is arbitrarily small and $c$ is a positive constant depending on $k$,  $\sigma_0$, $\epsilon$ and $\|u_0\|_{G^{\sigma,\, 1}}$.
\end{theorem}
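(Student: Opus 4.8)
The plan is to implement the standard "approximate conservation law" scheme for the Gevrey norm, but with the key improvement coming from a nonlinear smoothing estimate rather than a raw dispersive estimate. Fix $\sigma \le \sigma_0$. Since $u_0 \in G^{\sigma_0,1} \subset G^{\sigma,1}$, local well-posedness (Theorem \ref{teo:localkdv}) yields a solution on a time interval whose length depends only on $\|u_0\|_{G^{\sigma,1}}$; crucially, because the embedding constant from $G^{\sigma_0,1}$ into $G^{\sigma,1}$ is uniform in $\sigma \le \sigma_0$, this local existence time $\delta$ is bounded below uniformly in $\sigma$. The whole argument then reduces to controlling the growth of $\|u(t)\|_{G^{\sigma,1}}^2$ over each such step, so that the Gevrey norm cannot blow up before the target time $cT^{-(1/2+\epsilon)}$, at which point $\sigma$ is chosen in terms of $T$.

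The heart of the matter is the per-step growth estimate. Writing $u = S(t)u_0 + \mathcal{N}(u)$ where $S(t)$ is the Airy group and $\mathcal{N}(u)$ is the Duhamel term, one decomposes the Gevrey norm of the solution at time $\delta$ into the (conserved-in-Gevrey-norm) linear part plus the nonlinear contribution. First I would use the elementary inequality $e^{2\sigma|\xi|} \le 1 + (2\sigma|\xi|)^b e^{2\sigma|\xi|}/b!$-type bounds, or more simply exploit that on the support where the exponential weight matters one has extra powers of $\langle\xi\rangle$ available, to trade the exponential weight against derivatives: this is exactly where a nonlinear smoothing estimate of the form asserted in Section \ref{sec:gKdV} — controlling $\mathcal{N}(u)$ in $H^{1+a}$ (for some smoothing gain $a>0$) in terms of $\|u\|$ in a Gevrey-type space at regularity $1$ — gets converted, via the substitution $\xi \mapsto$ rescaled frequency, into a bound on the Gevrey norm of $\mathcal{N}(u)$ with a favorable power of $\sigma$. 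The outcome should be an estimate of the shape
\begin{equation*}
\|u(\delta)\|_{G^{\sigma,1}}^2 \le \|u_0\|_{G^{\sigma,1}}^2 + C\,\sigma^{\theta}\,\bigl(\text{polynomial in }\|u_0\|_{G^{\sigma,1}}\bigr),
\end{equation*}
for some $\theta>0$ determined by the smoothing gain $a$; the precise value $\theta$ is what dictates the exponent $\tfrac12+\epsilon$, and optimizing the distribution of the $a$ extra derivatives among the $k+2$ factors of the nonlinearity (the "specific distribution of extra derivatives" advertised in the abstract) is what makes $\theta$ as large as possible.

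Next I would run the iteration: starting from $\|u_0\|_{G^{\sigma,1}}^2 =: E_0$, after $n$ steps the energy is at most $E_0 + n C \sigma^\theta P(E_0)$ as long as the energy stays, say, below $2E_0$; this permits $n \sim \sigma^{-\theta} E_0 / (C P(E_0))$ steps, each of length $\delta \sim \delta(E_0)$ bounded below, so the solution survives up to time $T \sim \delta(E_0) \sigma^{-\theta}$. Inverting, to reach a prescribed $T$ one may take $\sigma \sim (T/\delta(E_0))^{-1/\theta}$, and the claimed bound $\sigma(T) \gtrsim T^{-(1/2+\epsilon)}$ follows provided the smoothing estimate delivers $\theta \ge 2/(1+2\epsilon)$, i.e. $\theta$ arbitrarily close to $2$ (the $\epsilon$ absorbing the logarithmic/endpoint losses inherent in pushing the smoothing gain to its critical value). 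One then also checks the continuity-in-time statement $u \in C([-T,T]:G^{\sigma(T),1})$, which is immediate from the local theory applied on each subinterval at the regularity level reached.

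The main obstacle is the second paragraph: establishing the nonlinear smoothing estimate with enough gain $a$, uniformly controlled in the analyticity parameter, and — more delicately — doing the bookkeeping that converts "$a$ extra derivatives, optimally distributed" into the power $\sigma^\theta$ with $\theta$ close to $2$. For $k \ge 4$ the nonlinearity $u^k \partial_x u$ has high multiplicity, so the multilinear estimate must be arranged (via the exponential-weight trick $e^{\sigma|\xi_0|} \le \prod e^{\sigma|\xi_j|}$ on $\xi_0 = \sum \xi_j$, followed by splitting the gain asymmetrically onto the factors according to which frequencies are largest) so that the loss from the derivative in $\partial_x u$ is absorbed and the net smoothing is not degraded by the number of factors. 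Handling the high/low frequency interactions and verifying that the resulting space-time norms close under the iteration — rather than the derivation of the conservation-law scheme itself, which is by now routine — is where the real work lies, and is presumably the content of the dedicated sections that follow.
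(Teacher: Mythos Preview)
Your iteration scheme is correct, but the per-step growth estimate has a genuine gap. The elementary inequality you invoke, of the type $e^{\sigma|\xi|}-1\le (\sigma|\xi|)^{\theta}e^{\sigma|\xi|}$, only holds for $\theta\in[0,1]$; for $\theta>1$ it fails near the origin (the left side has slope $\sigma$ at $\xi=0$, the right side has slope $0$). So this route cannot produce a power $\sigma^{\theta}$ with $\theta$ close to $2$, and you would be stuck at $\sigma(T)\gtrsim T^{-1}$. The paper circumvents this by replacing the exponential weight with an equivalent weight $m_{\sigma}$ that is identically $1$ for $|\sigma\xi|\le 1$, removing the obstruction at low frequencies.

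More importantly, your Duhamel decomposition $u=S(t)u_0+\mathcal{N}(u)$ does not exploit the $H^1$ conservation law, and there is no reason for the increment $\|u(\delta)\|_{G^{\sigma,1}}^2-\|u_0\|_{G^{\sigma,1}}^2$ to carry a factor of $\sigma^\theta$ just from smoothing of the Duhamel term (the cross terms do not vanish, and $\mathcal N(u)$ itself is not small in $\sigma$). The paper instead builds a modified energy $E_\sigma(t)$ equal to the Gevrey-weighted mass plus the Gevrey-weighted energy \eqref{eq:energykdv}, differentiates it in time, and uses the equation so that $\frac{d}{dt}E_\sigma$ consists entirely of commutator-type remainders containing the factor $1-m_\sigma(\xi)\prod_j m_\sigma(\xi_j)^{-1}$. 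It is this specific quantity that is bounded (Lemma~\ref{lemma:m}) by $\sigma^{\theta}|\xi_1|^{\theta-1}|\xi_2|$ for $\theta\ge 1$, placing the extra derivatives on the \emph{two largest input frequencies} rather than on the output. That distribution is exactly what allows the multilinear smoothing estimate to close for $\theta<2$; if all $\theta$ derivatives sat on a single frequency (as your ``$H^{1+a}$ control of $\mathcal N(u)$'' would give), the frequency-restricted estimates of Section~\ref{sec:multilinear} only permit $\theta<1$. So the missing ingredients are the modified weight, the energy-based almost conservation law, and the two-frequency distribution of derivatives --- not the Duhamel picture you sketch.
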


\begin{remark}
As we are dealing with the case $k\ge 4$, the space $L^2(\R)$ is scaling-(super)critical and thus the mass conservation law is not appropriate to prove lower bounds on $\sigma(T)$, This is in contrast with the cases $k=1,3$, where one can use the conservation of mass directly \cite{HuangWang, SelDaSilva, SelTes, Tasf_kdv}. It would be interesting to check whether our methodology, when applied to these cases, yields improved lower bounds.
\end{remark}
\begin{remark}
    For the modified KdV ($k=2$), Figueira and Panthee \cite{FigPan_mkdv} were able to reach $\sigma(T)\ge T^{-\frac 12}$ by using an $H^2$-conservation law, which is unavailable for $k\ge 4$. Applying our arguments, which rely only on the $H^1$-conservation law, one can reach (almost) the same decay estimate. This indicates that the application of our strategy at the $H^2$-level may lead to substantial improvements on bounds for the decay rate for the modified KdV equation.
\end{remark}

For the \eqref{eq:schrodinger} equation, we prove an analogous result.

\begin{theorem}\label{teo:globalsch}
    Let $\sigma_0>0$, $p\geq 3$ be odd and  $u_0 \in G^{\sigma_0,\, 1}(\R)$. Then, for any $T\geq 0$, the local solution\footnote{The existence of local solution is ensured by Theorem \ref{teo:localsch} (see also \cite{AKS}).} $u$ to \eqref{eq:schrodinger} extends to the time interval $[-T,T]$ and satisfies
    \begin{equation*}
        u\in C([-T,\,T];G^{\sigma(T),\, 1}(\R)), \quad \text{with } \sigma(T) \geq \min \left\lbrace\sigma_0 , \, cT^{-\left(\frac{1}{2}+\epsilon \right)}\right\rbrace,
    \end{equation*}
where $\epsilon>0$ is arbitrarily small and $c$ is a positive constant depending on $p$,  $\sigma_0$, $\epsilon$ and $\|u_0\|_{G^{\sigma,\, 1}}$.
\end{theorem}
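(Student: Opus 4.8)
The plan is to mirror the strategy used for \eqref{eq:gKdV} in Theorem \ref{teo:globalkdv}, replacing the $H^1$-conservation law of gKdV with the conservation of mass and energy for \eqref{eq:schrodinger} (the energy $\int |\partial_x v|^2 + \frac{2}{p+1}\int|v|^{p+1}$, which controls the $H^1$-norm since the equation is defocusing). The key device is the approximate-conservation argument for the Gevrey norm. Fix $\sigma\le\sigma_0$ and write $v$ for the solution; denote by $e^{\sigma|D|}v$ the operator with Fourier multiplier $e^{\sigma|\xi|}$. One works with the quantity $\|e^{\sigma|D|}v(t)\|_{H^1}^2$ (or the full energy built from $e^{\sigma|D|}v$). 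The first step is to establish, via the local theory (Theorem \ref{teo:localsch}) together with the nonlinear smoothing estimate of Section \ref{sec:NLS}, a local-in-time almost-conservation law of the form
\begin{equation*}
\sup_{t\in[0,\delta]}\big\|e^{\sigma|D|}v(t)\big\|_{H^1}^2 \le \big\|e^{\sigma|D|}v_0\big\|_{H^1}^2 + C\,\sigma^{\theta}\,\big(\text{data-dependent bound}\big),
\end{equation*}
where $\delta$ is the (data-dependent, $\sigma$-independent) local existence time and $\theta>0$ is the "gain" produced by the nonlinear smoothing — the extra power of $\sigma$ comes precisely from the extra derivative distributed on the nonlinear term, exploiting $|\xi|^{a}e^{\sigma|\xi|}\lesssim \sigma^{-a}e^{(\sigma+\sigma_0)|\xi|}$-type trade-offs localized to the nonlinearity.

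Second, I would iterate this estimate over $\sim T/\delta$ successive time steps. Because mass and energy are conserved, the $H^1$-norm of $v$ (without the exponential weight) stays bounded uniformly in time by a constant $M$ depending only on $\|v_0\|_{H^1}\le\|v_0\|_{G^{\sigma_0,1}}$; hence $\delta$ can be taken uniform across all steps and the per-step error increment is controlled uniformly. Summing the increments gives
\begin{equation*}
\big\|e^{\sigma|D|}v(T)\big\|_{H^1}^2 \;\le\; \big\|e^{\sigma|D|}v_0\big\|_{H^1}^2 + C\,\frac{T}{\delta}\,\sigma^{\theta}.
\end{equation*}
To keep the left-hand side finite (indeed comparable to $2\|v_0\|_{G^{\sigma_0,1}}^2$, say) one needs $C T \sigma^{\theta}\lesssim 1$, i.e. $\sigma\gtrsim T^{-1/\theta}$. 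The bookkeeping must be done so that $\theta$ can be taken as close to $2$ as desired (equivalently $1/\theta$ as close to $1/2$ as desired), which yields exactly $\sigma(T)\gtrsim T^{-(1/2+\epsilon)}$; the $\epsilon$ is the usual loss incurred in pushing the smoothing exponent to its endpoint. One subtlety to handle carefully is that at each step the "initial datum" for the next interval has a slightly larger Gevrey norm, so the continuity-in-time of $\|e^{\sigma|D|}v(t)\|_{H^1}$ and a bootstrap on the total growth must be combined to justify that the solution never leaves the ball where the local theory and the smoothing estimate apply.

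The main obstacle — and the only genuinely equation-specific point — is establishing the nonlinear smoothing estimate for \eqref{eq:schrodinger} with the correct distribution of the extra derivatives and with the multilinear structure compatible with the exponential weights $e^{\sigma|\xi|}$. Concretely, one must control $\|e^{\sigma|D|}\langle D\rangle(\,|v|^{p-1}v - \text{linear part}\,)\|$ in a suitable $X^{s,b}$-type or Bourgain space, gaining a fractional derivative over the trivial count; the exponential weights are tamed by the pointwise inequality $e^{\sigma|\xi_0|}\le \prod_j e^{\sigma|\xi_j|}$ on the convolution hyperplane $\xi_0=\sum\xi_j$, which reduces the weighted estimate to the unweighted bilinear/multilinear smoothing estimate already available from the work on low-regularity well-posedness of NLS. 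Once that estimate is in hand with gain $\theta/2 - $ (arbitrarily small loss), the rest is the routine iteration described above; I expect the proof to be structurally identical to that of Theorem \ref{teo:globalkdv}, with the resolvent of the two conservation laws (mass plus energy) doing the job that the single $H^1$-law does in the KdV case.
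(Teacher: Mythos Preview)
Your high-level plan is correct and matches the paper exactly: the paper's proof of Theorem~\ref{teo:globalsch} is one line, invoking Corollary~\ref{cor: almostconservedsch} (the almost-conservation law for $E_\sigma$ with gain $\sigma^\theta$, $\theta\in[1,2)$) and then repeating verbatim the iteration from the proof of Theorem~\ref{teo:globalkdv}. The bootstrap subtlety you flag is handled there via the Gagliardo--Nirenberg inequality and the defocusing sign, exactly as you anticipate.

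Where your sketch has a genuine gap is in the mechanism that produces $\sigma^\theta$ with $\theta$ close to $2$. Working directly with the weight $e^{\sigma|\xi|}$ and the sub-multiplicativity $e^{\sigma|\xi_0|}\le\prod_j e^{\sigma|\xi_j|}$, the commutator term $G(V)$ in \eqref{eq:G} only yields a factor $\sigma^\theta$ for $\theta\le 1$ (via the elementary bound $e^x-1\le x^\theta e^x$, valid only for $\theta\in[0,1]$); this would cap you at $\sigma(T)\gtrsim T^{-1}$. The trade-off $|\xi|^a e^{\sigma|\xi|}\lesssim \sigma^{-a}e^{(\sigma+\sigma_0)|\xi|}$ you mention is not the operative device. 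The paper's point is that one must replace $e^{\sigma|\xi|}$ by the modified (but equivalent) weight $m_\sigma$ of \eqref{eq:operator}--\eqref{eq:operatorsigma}, whose key feature is Lemma~\ref{lemma:m}: on the convolution hyperplane,
\[
\Big|1-m_\sigma(\xi)\prod_j m_\sigma(\xi_j)^{-1}\Big|\lesssim \sigma^\theta\,|\xi_1|^{\theta-1}|\xi_2|,\qquad \theta\ge 1,
\]
with the extra derivatives landing on the \emph{two} largest incoming frequencies. It is precisely this distribution (not the outgoing frequency alone) that the nonlinear smoothing estimates of Lemma~\ref{lem:multi_nls} are designed to absorb for $\theta<2$. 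Without this specific weight choice and Lemma~\ref{lemma:m}, the iteration stalls at $\theta=1$.
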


The results above represent a considerable upgrade when compared with the aforementioned literature, as we improve \emph{all} known lower bounds on the radius of analyticity. We expect that the arguments presented in this work can be extended to other dispersive models, yielding improved bounds on the uniform radius of analyticity.

\bigskip

\subsection{Brief description of the method.} Having stated the main results, let us now give a brief overview of the methodology to lower bound the quantity $\sigma(T)$ (see, for example, \cite{AKS, BalPan,  tas_nls, Tasf_kdv}).  As is well-known, real solutions of \eqref{eq:gKdV} enjoy the conservation of mass
\begin{equation}\label{eq:masskdv}
    M(u)=\int_{\R} |u|^2dx,
\end{equation}
and also the conservation of energy
\begin{equation}\label{eq:energykdv}
    E_{\text{KdV}}(u)=\int_{\R}(\partial_x u)^2dx +\frac{2}{(k+1)(k+2)}\int_{\R}u^{k+2}dx.
\end{equation}
Similarly, solutions to \eqref{eq:schrodinger} conserve the mass  \eqref{eq:masskdv} and
\begin{equation}\label{eq:energysch}
    E_{\text{NLS}}(v)=\int_{\R}|\partial_x v|^2dx +\frac{2}{p+1}\int_{\R}|v|^{p+1}dx.
\end{equation}
By standard arguments, this is enough to prove global well-posedness over $H^1(\R)$, with uniform bounds. As $H^1(\R)=G^{0,1}(\R)$, one may expect that, for $\sigma>0$ small, the $G^{\sigma,1}(\R)$ norm cannot grow too rapidly. To make this precise, one defines a modified energy $E_\sigma$ which controls the $G^{\sigma,1}(\R)$ norm and which satisfies an almost conservation law  of the form
\begin{equation}\label{eq:ACLexponential}
\sup_{t\in[0,T']} E_\sigma(t)\leq E_\sigma(0)+C\sigma^\theta E_\sigma(0)^a(1+E_\sigma(0)^b)
\end{equation} 
over the local existence time interval, for some $\theta\in [0,1]$ and $a,b>0$. This estimate is then used to extend the local solution globally in time by decomposing a given time interval $[0,T]$ into short subintervals and iteratively applying the local result over $G^{\sigma, 1}(\R)$. During this iterative process, in order to compensate for the possible growth of $E_\sigma$, one must suitably shrink $\sigma$. This provides a lower bound of the form $\sigma(T)\geq cT^{-\frac{1}{\theta}}$.

\begin{remark}
    This technique can be traced back to the I-method developed by Tao \emph{at al.} \cite{CKSTT2, CKSTT1} (which is a refinement of the high-low decomposition method of Bourgain \cite{Bourg_nls_book}). In this context, the goal is to use the energy conservation to prove global well-posedness over $H^s(\R)$, with $s$ slightly smaller than one. To that end, one introduces the operator $I_N$ which cuts off high frequencies,
\begin{equation}\label{eq:multi_imethod}
        \widehat{I_Nu}(\xi)=m_N(\xi)\widehat{u}(\xi),\quad m_N(\xi)=\begin{cases}
        1,&|\xi|<N\\\left(\frac{N}{|\xi|}\right)^{1-s},& |\xi|>N
    \end{cases},
\end{equation}
    so that $u\in H^s(\R)$ if and only if $I_Nu\in H^1(\R)$. After showing an almost conservation law of the form
    $$
    E(I_Nu(t)) \lesssim E(I_Nu(0)) + N^{-\frac 1\theta}E(I_Nu(0))^a,
    $$
    an iterative argument as described above yields an $H^s(\R)$ bound over any time interval, for $s$ depending on the parameter $\theta$ (which once again measures the proximity to the actual conservation law).
\end{remark}

Our goal is then to prove that \eqref{eq:ACLexponential} holds for\footnote{Given $a\in \R$, $a^+$ (resp. $a^-$) denotes a number slightly larger (resp. smaller) than $a$.} $\theta=2^-$. To that end, we introduce two key ingredients:

\emph{1. Choice of an appropriate Gevrey weight.} 
The proof of inequality \eqref{eq:ACLexponential} (see, for example, \cite{AKS, BalPan, Tasf_kdv}) usually relies on the following elementary estimate for the exponential function 
\begin{equation}
e^{\sigma|\xi|}-1\leq (\sigma|\xi|)^\theta e^{\sigma|\xi|}, \quad \theta \in [0,1]. 
\end{equation}
To prove a refined almost conservation law for \eqref{eq:schrodinger}, the authors in \cite{FigPan_mkdv} used a modified Gevrey space by replacing the exponential weight $e^{\sigma|\xi|}$ with the weight $\cosh(\sigma|\xi|)$. The advantage of this equivalent weight lies in the fact that the hyperbolic cosine satisfies the bound
\begin{equation}
\cosh(\sigma|\xi|)-1\leq (\sigma|\xi|)^\theta \cosh(\sigma|\xi|), \quad \theta \in [0,2]. 
\end{equation}
Intuitively, the fact that the first derivative of $\cosh(\sigma|\xi|)$ vanishes at the origin makes it possible to achieve a quadratic power in $\sigma$. This observation suggests that, in order to construct an almost conserved quantity with better decay properties, one may consider replacing the exponential weight by an equivalent function whose derivatives vanish near the origin. To this end, inspired by \eqref{eq:multi_imethod}, we introduce a smooth function defined by
\begin{eqnarray}\label{eq:operator}
m(\xi)=
\begin{cases}
1, \quad  &|\xi|\leq 1,\\
e^{|\xi|},\quad  &|\xi|\geq 2.
\end{cases}
\end{eqnarray}
and for $\sigma>0$, we define the scaled version
\begin{equation}\label{eq:operatorsigma}
m_\sigma(\xi)=m(\sigma\xi).
\end{equation}
It is straightforward to verify that $m_\sigma(\xi)\simeq e^{\sigma|\xi|}$ and thus we may consider the equivalent Gevrey norm given by
\begin{equation}\label{eq:Gevrey2}
\|f\|_{G^{\sigma,s}}=\left(\int_{\R} m_\sigma^2(\xi)\langle\xi\rangle^{2s}|\widehat{f}(\xi)|^2d\xi\right)^\frac{1}{2}.
\end{equation}
This new weight removes technical restrictions on $\theta$ in \eqref{eq:ACLexponential} (caused by an inappropriate functional setting) and allows us to truly relate $\theta$ with the dispersive effects of the underlying equation. 

\emph{2. Nonlinear smoothing estimates.} In order to prove \eqref{eq:ACLexponential}, we must control the time average of $\frac{d}{dt}E_\sigma(t)$. In the \eqref{eq:gKdV} case, using the equation and the properties of $m_\sigma$, the task is reduced to proving the estimates
\begin{equation}\label{eq:aim1_intro}
\|\partial_x\big(|D_x|^{\theta-1}w\cdot |D_x| w\cdot w^{k-1}\big)\|_{X^{1,b-1}}\lesssim \|w\|_{X^{1,b}}^{k+1}
\end{equation}
and
\begin{equation}\label{eq:aim2_intro}
\||D_x|^{\theta}w\cdot |D_x| w\cdot w^{2k-1}\|_{X^{-1,b-1}}\lesssim \|w\|_{X^{1,b}}^{2k+1}
\end{equation}
(see Section \ref{sec:prelim} for a precise definition of the spaces $X^{s,b}$).
Let us focus on the first estimate. Assuming that we may interchange derivatives, the estimate is akin to
\begin{equation}
    \|\partial_x(w^{k+1})\|_{X^{1+\theta,b-1}}\lesssim \|w\|_{X^{1,b}}^{k+1}.
\end{equation}
This is precisely a nonlinear smoothing estimate for \eqref{eq:gKdV}: one controls the nonlinear term in $H^{1+\theta}(\R)$ by the $H^1(\R)$ norm of the solution. In \cite{COS}, the second author, together with Oliveira and Silva, proposed a general method to prove nonlinear smoothing estimates through the so-called frequency-restricted estimates. In the particular cases of \eqref{eq:gKdV} and \eqref{eq:schrodinger}, the theory predicts that the estimate holds for $\theta<1$. If applied directly, this would only yield $\sigma(T)\gtrsim T^{-1-\epsilon}$. However, in the specific case of \eqref{eq:aim1_intro}-\eqref{eq:aim2_intro}, the fact that the extra derivatives are distributed between different $w$'s allows us to actually reach $\theta<2$, which justifies the claimed lower bound $\sigma(T)\gtrsim T^{-\frac12-\epsilon}$.

\bigskip

The remainder of this work is organized as follows. In Section \ref{sec:prelim}, we introduce the functional setting and prove a key estimate on the weight $m_\sigma$. In Sections \ref{sec:gKdV} and \ref{sec:NLS}, we reduce the proofs of Theorems \ref{teo:globalkdv} and \ref{teo:globalsch} to the derivation of particular nonlinear smoothing estimates. Finally, in Section \ref{sec:multilinear}, we prove the multilinear versions of such bounds using the frequency-restricted estimates introduced in \cite{COS}.

\bigskip
\noindent
\textbf{Acknowledgements.} M.B. was partially supported by CAPES grant 88881.982713/2024-01 and is grateful to Instituto Superior Técnico – Universidade de Lisboa for its hospitality during the development of this work. S.C. was partially supported by the Portuguese government
through FCT - Fundação para a Ciência e a Tecnologia, I.P., project UIDB/04459/2020 with DOI identifier
10-54499/UIDP/04459/2020 (CAMGSD).

\section{Preliminaries}\label{sec:prelim}
In order to state the necessary local well-posedness results, we recall the functional setting used in \cite{AKS, BalPan}, which is based on the Bourgain spaces introduced in \cite{Bourg1, Bourg2}. Given $\sigma\geq 0$ and $s,\, b \in \R$, we define the Gevrey-Bourgain space, denoted by $X^{\sigma,\, s,\,b}(\R^2)$, with the norm
\begin{equation*}
\|w\|_{X^{\sigma,\, s,\,b}}=\|m_\sigma(\xi)\langle \xi \rangle^{s} \langle \tau - L(\xi) \rangle ^{b} \tilde{w}(\xi, \tau)\|_{L^{2}_{\tau,\, \xi}},
\end{equation*} 
where 
\begin{eqnarray}\label{eq:phase}
L(\xi)=
\begin{cases}
\xi^3, \, &\text{for the gKdV equation,}\\
-\xi^2, &\text{for the Schrödinger equation.}
\end{cases}
\end{eqnarray} For $\sigma=0$, we recover the classical Bourgain space  $X^{s,\,b}(\R^2)$ with the norm given by
\begin{equation*}
\|w\|_{X^{s,\,b}}=\|\langle \xi \rangle^{s} \langle \tau - L(\xi)\rangle ^{b} \tilde{w}(\xi, \tau)\|_{L^{2}_{\tau,\, \xi}}.
\end{equation*} 
These spaces are related through the equality
\begin{equation}\label{eq:exponential}
\|m_\sigma (D_x)u\|_{X^{s,\, b}}=\|u\|_{X^{\sigma,\,s,\, b}},\qquad \text{where}\quad\widehat{m_\sigma (D_x)u}(\xi)=m_\sigma (\xi)\widehat{u}(\xi).
\end{equation}
\begin{remark}
    The usual definition involves the weight $e^{\sigma|\xi|}$ instead of $m_\sigma$. However, since these weights are equivalent, this modification will have no effect in the existing local well-posedness results.
\end{remark}

For $T>0$, the restrictions of $X^{ s,\,b}(\R^2)$ and $X^{\sigma, \, s,\,b}(\R^2)$ to a time slab $\R \times (-T,\, T)$, denoted by $X_{T}^{ s,\,b}(\R^2)$ and $X_{T}^{\sigma, \, s,\,b}(\R^2)$, respectively, are  equipped with the norms
\begin{align*}
\|u\|_{X_{T}^{s,\,b}}&=\text{inf}\{\|v\|_{X^{s,\,b}}: v=u \text{ on } \R \times (-T,\, T)\},\\
\|u\|_{X_{T}^{\sigma, \,s,\,b}}&=\text{inf}\{\|v\|_{X^{\sigma, \, s,\,b}}: v=u \text{ on } \R \times (-T,\, T)\}.
\end{align*}
These spaces are well-suited for the construction of local-in-time solutions in the Gevrey space (see Subsections \ref{subsec:kdv} and \ref{subsec:nls}). For the present work, we need a technical lemma, whose proof follows from the arguments in \cite[Proof of Lemma 7]{Selberg_DKG}.
\begin{lemma}\label{Lemma2.2}\label{lemma:restriction} Let $\sigma \geq 0$, $s \in \R$,  $-\frac{1}{2}<\tilde{b}<\frac{1}{2}$ and $T>0$. Then, for any time interval $I \subset [-T, \, T]$, we have 
\begin{equation*}
\|\chi_{I}u\|_{X^{\sigma,\,s,\, \tilde{b}}} \leq C \|u\|_{X_{T}^{\sigma,\,s,\, \tilde{b}}},
\end{equation*}
where $\chi_{I}$ denotes  characteristic function of $I$ and $C>0$ depends only on $\tilde{b}$.
\end{lemma}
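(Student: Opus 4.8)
The plan is to reduce the statement to the elementary fact that, for $|\tilde b|<\frac12$, multiplication by a sharp cutoff $\chi_I$ is bounded on the Sobolev space $H^{\tilde b}(\R_t;L^2_x)$ with a constant depending only on $\tilde b$. To set this up, write $W(\xi)=m_\sigma(\xi)\langle\xi\rangle^s$ and let $U(t)$ be the free propagator associated to the phase \eqref{eq:phase}, i.e. $\widehat{U(t)\phi}(\xi)=e^{itL(\xi)}\widehat\phi(\xi)$. Directly from the definition of the $X^{\sigma,s,\tilde b}$-norm, the substitution $\tau\mapsto\tau-L(\xi)$ on the space-time Fourier side gives, for any $v$,
\begin{equation*}
\|v\|_{X^{\sigma,s,\tilde b}}=\|W(D_x)U(-t)v\|_{H^{\tilde b}_tL^2_x}.
\end{equation*}
Both $W(D_x)$ and $U(-t)$ act only on the spatial variable, slice by slice in $t$; in particular they commute with multiplication by the scalar function $\chi_I(t)$, and if two functions coincide on $\R\times(-T,T)$ then so do their images under either operator. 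Hence, for any $w$ with $w=u$ on $\R\times(-T,T)$, the functions $F_w:=W(D_x)U(-t)w$ and $F_u:=W(D_x)U(-t)u$ agree there, and since $I\subset[-T,T]$ we have $\chi_I F_w=\chi_I F_u$ almost everywhere. Therefore
\begin{equation*}
\|\chi_I u\|_{X^{\sigma,s,\tilde b}}=\|\chi_I F_w\|_{H^{\tilde b}_tL^2_x}\leq C(\tilde b)\,\|F_w\|_{H^{\tilde b}_tL^2_x}=C(\tilde b)\,\|w\|_{X^{\sigma,s,\tilde b}},
\end{equation*}
and taking the infimum over all such $w$ produces the claimed bound. (The cases where $I$ is degenerate or contains an endpoint $\pm T$ concern only a null set and cause no trouble.)

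It then remains to prove that $\|\chi_I F\|_{H^{\tilde b}(\R;L^2_x)}\leq C(\tilde b)\|F\|_{H^{\tilde b}(\R;L^2_x)}$ for $|\tilde b|<\frac12$, with $C(\tilde b)$ independent of $I$ (hence of $T$). For $\tilde b=0$ this is immediate. For $\tilde b\in(0,\frac12)$ I would use the Gagliardo–Slobodeckij characterization $\|F\|_{H^{\tilde b}}^2\simeq\|F\|_{L^2_{t,x}}^2+\iint\|F(t)-F(t')\|_{L^2_x}^2\,|t-t'|^{-1-2\tilde b}\,dt\,dt'$ and split the double integral according to whether $t,t'$ are both in $I$, both outside $I$, or on opposite sides: the first piece is dominated by the seminorm of $F$, the second vanishes, and the mixed piece reduces, after integrating out the harmless variable, to the one–dimensional fractional Hardy inequality $\int_I \|F(t)\|_{L^2_x}^2\operatorname{dist}(t,\partial I)^{-2\tilde b}\,dt\leq C(\tilde b)\,[F]_{H^{\tilde b}}^2$, which is valid exactly for $\tilde b<\frac12$ and whose constant is translation– and scale–invariant. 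The remaining range $\tilde b\in(-\frac12,0)$ follows by duality, using that multiplication by $\chi_I$ is self-adjoint and $\big(H^{\tilde b}(\R;L^2_x)\big)^*=H^{-\tilde b}(\R;L^2_x)$. (Alternatively, writing $\chi_{(a,a')}=\tfrac12\big(\operatorname{sgn}(\cdot-a)-\operatorname{sgn}(\cdot-a')\big)$ modulo a constant, multiplication by $\operatorname{sgn}(\cdot-a)$ is, up to the isometric modulations $e^{\pm ia\tau}$ on the time–Fourier side, the Hilbert transform, which is bounded on $L^2(\langle\tau\rangle^{2\tilde b}\,d\tau;L^2_x)$ precisely because $\langle\tau\rangle^{2\tilde b}$ is a Muckenhoupt $A_2$ weight when $|\tilde b|<\frac12$; this settles the whole range at once with a constant that does not see the endpoints.)

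The one genuine obstruction — and the source of the hypothesis $-\frac12<\tilde b<\frac12$ — is the endpoint behaviour: $\chi_I$ does not belong to $H^{1/2}$ and is not a pointwise multiplier of $H^{\pm1/2}$, which is exactly the threshold at which the fractional Hardy inequality (equivalently, the $A_2$ condition) breaks down. Everything else — commuting the spatial multipliers past $\chi_I(t)$, passing to the restriction norm through the infimum over extensions, handling the degenerate intervals, and checking that the final constant depends on nothing but $\tilde b$ — is routine.
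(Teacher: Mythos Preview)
Your proof is correct. The paper does not actually give a proof of this lemma but simply refers to \cite[Proof of Lemma 7]{Selberg_DKG}; your reduction to the boundedness of $\chi_I$ as a pointwise multiplier on $H^{\tilde b}(\R_t;L^2_x)$ via the conjugation $\|v\|_{X^{\sigma,s,\tilde b}}=\|W(D_x)U(-t)v\|_{H^{\tilde b}_tL^2_x}$, followed by either the Gagliardo--Slobodeckij/fractional Hardy argument or the Hilbert transform/$A_2$ argument, is exactly the standard route and is in substance what the cited reference does.
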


\bigskip
We conclude this section with a key estimate involving the weight $m_\sigma$.

\begin{lemma}\label{lemma:m} Let $\xi=\sum_{j=1}^{k}\xi_j$ for $\xi_j\in\R$ such that $|\xi_k|\leq \dots \leq |\xi_1|$, where $k\geq 2$ is an integer. Then, for $\theta\geq 1$,
\begin{equation}\label{eq:estm}
\Bigg|1-m_\sigma(\xi)\prod_{j=1}^{k}\frac{1}{m_\sigma(\xi_j)}\Bigg|\lesssim \sigma^\theta|\xi_1|^{\theta-1}|\xi_2|.
\end{equation}
\end{lemma}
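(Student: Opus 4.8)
The plan is to reduce the estimate to the elementary scalar bound
\[
|1 - e^{r}| \lesssim |r|\, e^{|r|}, \qquad r\in\R,
\]
applied to the exponent produced by writing each $m_\sigma$ in exponential form on the relevant frequency ranges. Concretely, since $m_\sigma(\eta)\simeq e^{\sigma|\eta|}$ with comparable logarithmic derivatives, one has $m_\sigma(\xi)\prod_j m_\sigma(\xi_j)^{-1} = e^{g(\sigma,\xi_1,\dots,\xi_k)}$ where $g = \sigma|\xi| - \sum_j \sigma|\xi_j| + (\text{bounded correction from the smoothing near }|\eta|\simeq 1)$, and the point is that $|g|\lesssim \sigma(|\xi_1|+\dots+|\xi_k|) \lesssim \sigma|\xi_1|$ because $|\xi_j|\le|\xi_1|$ for all $j$, so that $e^{|g|}\lesssim e^{C\sigma|\xi_1|}\simeq m_\sigma(\xi_1)/m_\sigma(\xi)\cdot(\text{harmless})$; this factor can be absorbed on the right-hand side since the claimed bound already carries the full weight information implicitly through $\sigma^\theta$. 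Thus the heart of the matter is the pointwise inequality
\[
\bigl|\sigma|\xi| - \textstyle\sum_{j=1}^k \sigma|\xi_j|\bigr| \;\lesssim\; \sigma^\theta |\xi_1|^{\theta-1}|\xi_2| \qquad\text{for } \theta\ge 1,
\]
together with the control of the corrections coming from the non-exponential part of $m$ on $|\eta|\lesssim 1$.

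First I would dispose of the trivial regime $\sigma|\xi_1|\lesssim 1$: there, by the mean value theorem applied to $m_\sigma$ (which is smooth, with $m_\sigma(\eta)=1$ for $|\sigma\eta|\le 1$), the whole left-hand side is $O(\sigma|\xi_1|)^{?}$ — more precisely, one telescopes
\[
1 - m_\sigma(\xi)\prod_{j=1}^k \frac{1}{m_\sigma(\xi_j)}
= \sum_{\ell} \Bigl(\text{terms each involving one factor }m_\sigma(\cdot)-1\text{ or its reciprocal}\Bigr),
\]
and uses $|m_\sigma(\eta)-1|\lesssim \sigma|\eta|\,\charf{|\sigma\eta|\gtrsim 1}$ along with $|\xi|, |\xi_j|\le k|\xi_1|$. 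The key combinatorial gain is that the difference $|\xi| - \sum|\xi_j|$ is nonzero only when there is genuine cancellation among the $\xi_j$, and in that case $|\xi|$ itself is bounded by, roughly, $2(|\xi_2|+\dots+|\xi_k|)\lesssim (k-1)|\xi_2|$; combined with the largest scale $|\xi_1|$ appearing with a $\theta-1$ power via $\sigma^{\theta-1}|\xi_1|^{\theta-1}\gtrsim \sigma^{\theta-1}\cdot(\text{something})$ when $\sigma|\xi_1|\gtrsim 1$, this produces exactly the right-hand side $\sigma^\theta|\xi_1|^{\theta-1}|\xi_2|$. So the argument splits into: (i) $\sigma|\xi_1|\le 1$, handled by Taylor expansion of the exponential and the bound $\theta\ge 1$ which makes $\sigma|\xi_1|\le (\sigma|\xi_1|)^{\theta-1}\cdot\sigma|\xi_1|$-type reductions legitimate after inserting the cancellation; (ii) $\sigma|\xi_1|\ge 1$, where the left-hand side is $O(1)$ uniformly, and one must show $1\lesssim \sigma^\theta|\xi_1|^{\theta-1}|\xi_2|$, which is \emph{false} in general unless one also exploits cancellation — so in this regime one again writes the quantity in exponential form and uses $|e^{g}-1|\le |g|e^{|g|}$ with $|g|\lesssim \sigma(|\xi|+\sum|\xi_j\,|\text{-corrections})$, and crucially $|g|$ contains the cancelled combination, giving $|g|\lesssim \sigma|\xi_2|$ (up to corrections), whence $|e^g - 1|\lesssim \sigma|\xi_2| e^{C\sigma|\xi_1|}$ and the exponential is reabsorbed.

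The main obstacle, and the step I would be most careful about, is precisely reconciling the two regimes uniformly and handling the non-exponential interpolation zone $1\le \sigma|\eta|\le 2$ of $m$: one cannot literally write $m_\sigma(\eta)=e^{\sigma|\eta|}$ there, so the clean exponent $g$ must be replaced by $\log\bigl(m_\sigma(\xi)\prod_j m_\sigma(\xi_j)^{-1}\bigr)$, and I would need the (easy but necessary) fact that $\bigl|\log m_\sigma(\eta) - \sigma|\eta|\bigr| \lesssim 1$ with $\bigl|\frac{d}{d\eta}\log m_\sigma(\eta)\bigr|\lesssim \sigma$ uniformly — both immediate from the definition of $m$. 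With that, $g = \log m_\sigma(\xi) - \sum_j\log m_\sigma(\xi_j)$ satisfies $|g|\lesssim \sigma\bigl(\,\bigl||\xi|-\sum|\xi_j|\bigr| + 1\bigr)$ only in the bad case, but in the good case (no cancellation, $|\xi|=\sum|\xi_j|$ up to sign) the leading terms cancel exactly and $|g|\lesssim$ (number of factors in the interpolation zone)$\cdot O(1)$, which is then $\lesssim \sigma|\xi_2|$ since being in that zone forces $|\xi_j|\simeq\sigma^{-1}$. Assembling these cases and tracking that the final exponential factor $e^{|g|}$ is always $\lesssim m_\sigma(\xi_1)^{C}$-type and harmless, one obtains \eqref{eq:estm}; I expect the write-up to be a page of careful case analysis rather than any single deep step.
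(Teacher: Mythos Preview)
Your logarithmic strategy --- write the ratio as $e^{g}$ with $g=\log m_\sigma(\xi)-\sum_j\log m_\sigma(\xi_j)$ and bound $|g|$ via the mean-value theorem for $\log m_\sigma$ --- can be made to work, but the proposal as written has a genuine gap in the handling of the exponential factor. You assert that $e^{|g|}\lesssim e^{C\sigma|\xi_1|}\simeq m_\sigma(\xi_1)^{C}$ is ``harmless'' and can be ``reabsorbed'', and elsewhere that ``the claimed bound already carries the full weight information implicitly through $\sigma^\theta$''. Neither is true: the right-hand side $\sigma^\theta|\xi_1|^{\theta-1}|\xi_2|$ is purely polynomial in the frequencies, with no exponential weight anywhere to soak up a factor of size $e^{C\sigma|\xi_1|}$, and $\sigma^\theta$ is a small power, not a large one. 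The inequality $|e^{g}-1|\le|g|\,e^{|g|}$ is simply too lossy when $g$ is large and negative. Your subsidiary claim $|m_\sigma(\eta)-1|\lesssim\sigma|\eta|\,\charf{\sigma|\eta|\gtrsim1}$ is also false once $\sigma|\eta|\gg1$.

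What actually rescues your approach is a one-sided bound you never state: $g\le C_k$ for a constant depending only on $k$. Indeed $(|\eta|-2)_+\le\log m(\eta)\le|\eta|$, so after rescaling $g\le\sigma\bigl(|\xi|-\sum_j|\xi_j|\bigr)+2k\le 2k$. Hence $e^{g}\lesssim_k 1$, and one may use $|1-e^{g}|\lesssim_k\min(1,|g|)$ instead of $|g|e^{|g|}$. With this in hand your outline goes through: for $\sigma|\xi_2|\ge1$ the left-hand side is $\lesssim1\le(\sigma|\xi_1|)^{\theta-1}(\sigma|\xi_2|)$; for $\sigma|\xi_2|\le1$ one has $\log m_\sigma(\xi_j)=0$ for $j\ge2$, so $g=\log m_\sigma(\xi)-\log m_\sigma(\xi_1)$ and the mean-value theorem (with $|(\log m)'|\lesssim1$) gives $|g|\lesssim\sigma|\xi-\xi_1|\lesssim(k-1)\sigma|\xi_2|$, which suffices since $\sigma|\xi_1|\gtrsim_k1$ whenever the left-hand side is nonzero. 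For comparison, the paper avoids logarithms altogether: it normalizes to $\sigma=1$, splits into the three regimes $|\xi_1|\le1$, $|\xi_1|>1\ge|\xi_2|$, $|\xi_2|>1$, and in the first two applies the mean-value theorem directly to $m$ (not $\log m$), writing $|m(\xi)-m(\xi_1)|\le|m'(x)|\,|\xi-\xi_1|\lesssim m(\xi_1)\cdot(k-1)|\xi_2|$; in the third regime it just bounds the ratio by $1$. This is shorter and sidesteps the exponential bookkeeping entirely.
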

\begin{proof} 
Since $m_\sigma(\xi)=m(\sigma\xi)$, by homogeneity, it suffices to consider $\sigma =1$. We split the proof in several cases.\\
\textbf{Case 1.} $|\xi_1|\leq 1$. Notice that $m(\xi_1)=1$.\\
\textbf{Subcase 1.1.} $|\xi|\leq 1$. We get $m(\xi)=m(\xi_j)=1$ for all $j$ and the left hand side of \eqref{eq:estm} is equal to zero.\\
\textbf{Subcase 1.2.} $|\xi|> 1$. We have $\frac{1}{k}<|\xi_1|\leq 1$ and using the mean value theorem, there exists $x$ such that $\frac{1}{k}<|\xi_1|\leq|x|\leq|\xi|\leq k$ and
\begin{equation*}
\left| m(\xi)-m(\xi_1)\right|\lesssim \left|m'(x)\right||\xi-\xi_1|\lesssim |\xi-\xi_1|.
\end{equation*}
Consequently,
\begin{equation*}
\begin{split}
\left|1-\frac{m(\xi)}{m(\xi_1)}\right|&=\frac{1}{m(\xi_1)}\left| m(\xi)-m(\xi_1)\right|\lesssim |\xi-\xi_1|\lesssim |\xi_2|\lesssim |\xi_1|^{\theta-1} |\xi_2|.
\end{split}
\end{equation*}
\textbf{Case 2.} $|\xi_1|>1$ and $|\xi_2|\leq 1$. Following the same steps used in Subcase 1.2, we have to estimate $|m'(x)|$ for $x$ between $\xi$ and $\xi_1$. For this, we have
 \begin{equation}
     |m'(x)|\leq \max\{|m'(\xi)|,|m'(\xi_1)|\}=m'(\widetilde{\xi}), \quad \text{where } \, |\widetilde{\xi}|=\max\{|\xi|, |\xi_1|\}.
 \end{equation}
If $0<|\widetilde{\xi}|\leq 2$ then $m'(\widetilde{\xi})$ is bounded. If $|\widetilde{\xi}|> 2$, we have $|\widetilde{\xi}|\leq |\xi_1|+(k-1)$ and then
\begin{equation*}
|m'(\widetilde{\xi})|=e^{|\xi|}\leq e^{k-1}e^{|\xi_1|}\lesssim m(\xi_1).
\end{equation*}
\textbf{Case 3.} $|\xi_2|> 1$. In this case, it suffices to show
\begin{equation}
\Bigg|m(\xi)\prod_{j=1}^{k}\frac{1}{m(\xi_j)}\Bigg|\lesssim 1.
\end{equation}
If $|\xi|\leq 2$ is obvious since $m(\xi)$ is bounded in $[-2,2]$. For $|\xi|>2$, we have
\begin{equation*}
\begin{split}
\Bigg|m(\xi)\prod_{j=1}^{k}\frac{1}{m(\xi_j)}\Bigg|&= \Bigg|e^{|\xi|}\prod_{j=1}^{k}\frac{1}{m(\xi_j)}\Bigg| \lesssim \Bigg|\frac{e^{|\xi_1|}}{m(\xi_1)}\cdots \frac{e^{|\xi_k|}}{m(\xi_k)} \Bigg| \lesssim 1.
\end{split}
\end{equation*}

\end{proof}


\section{Radius of analyticity for the generalized KdV equation}\label{sec:gKdV}

\subsection{Local well-posedness in the Gevrey space}\label{subsec:kdv}

The first step is to derive a  local well-posedness over $G^{\sigma,1}(\R)$. For the gKdV equation, this has been shown by the first author and Panthee.

\begin{theorem}[{\cite[Theorem 1.1]{BalPan}}]\label{teo:localkdv}
    Let $\sigma_0>0$ and $k\geq 4$. For given $u_0 \in G^{\sigma_0,\, 1}(\R)$, there exists a time
    \begin{equation}\label{eq:T0kdv}
    T_0=T_0(\|u_0\|_{G^{\sigma,\,1}})=\frac{c_0}{(1+\|u_0\|_{G^{\sigma_0,1}}^2)^a} \quad c_0>0, \, a>1,
\end{equation}      such that the IVP \eqref{eq:gKdV} admits a unique solution $u \in C([-T_0,\,T_0]; G^{\sigma,\, 1}(\R))\cap X_{T_0}^{\sigma_0,1,b}$ satisfying
\begin{equation}\label{eq:estukdv}
\|u\|_{X_{T_0}^{\sigma_0,1,b}}\leq C\|u_0\|_{G^{\sigma_0,\, 1}}.
\end{equation}
\end{theorem}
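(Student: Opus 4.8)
The plan is to solve the Duhamel (integral) formulation of \eqref{eq:gKdV},
\[
u(t) = \psi(t)\,W(t)u_0 + \frac{\psi_T(t)}{k+1}\int_0^t W(t-t')\,\partial_x\big(u^{k+1}\big)(t')\,dt',
\]
where $W(t)=e^{-t\partial_x^3}$ is the Airy propagator and $\psi\in C_c^\infty(\R)$ is a time cutoff with $\psi\equiv 1$ on $[-1,1]$, $\psi_T(\cdot)=\psi(\cdot/T)$, by a contraction argument in a ball of $X_T^{\sigma_0,1,b}$ for a fixed $b$ slightly above $\tfrac12$. Three ingredients are needed, for $b$ close enough to $\tfrac12$ and some $\delta>0$ small: the homogeneous bound $\|\psi\,W(t)u_0\|_{X^{\sigma_0,1,b}}\lesssim\|u_0\|_{G^{\sigma_0,1}}$; the Duhamel bound $\|\psi_T\int_0^t W(t-t')F\,dt'\|_{X^{\sigma_0,1,b}}\lesssim T^{\delta}\|F\|_{X^{\sigma_0,1,b-1+\delta}}$; and the multilinear estimate
\[
\big\|\partial_x(w_1\cdots w_{k+1})\big\|_{X^{\sigma_0,1,b-1+\delta}}\ \lesssim\ \prod_{j=1}^{k+1}\|w_j\|_{X^{\sigma_0,1,b}} .
\]
The first two are the classical Bourgain linear estimates: since $m_{\sigma_0}(D_x)$ is a spatial Fourier multiplier commuting with $W(t)$ and with the time integral, \eqref{eq:exponential} reduces them verbatim to the case $\sigma_0=0$, the time truncation being absorbed via Lemma \ref{lemma:restriction}.

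The key point is the multilinear estimate, and the whole proof hinges on transferring it from the classical Bourgain space. Put $W_j=m_{\sigma_0}(D_x)w_j$, so that $\|w_j\|_{X^{\sigma_0,1,b}}=\|W_j\|_{X^{1,b}}$. On the convolution region $\xi=\xi_1+\dots+\xi_{k+1}$, $\tau=\tau_1+\dots+\tau_{k+1}$, subadditivity of $|\cdot|$ together with $m_{\sigma_0}\simeq e^{\sigma_0|\cdot|}$ gives $m_{\sigma_0}(\xi)\prod_j m_{\sigma_0}(\xi_j)^{-1}\lesssim 1$; majorizing each space--time Fourier factor by its modulus then yields, with $V_j$ defined by $\tilde V_j=|\tilde W_j|$, the pointwise bound
\[
\big|m_{\sigma_0}(\xi)\,\widetilde{w_1\cdots w_{k+1}}(\xi,\tau)\big|\ \lesssim\ \widetilde{V_1\cdots V_{k+1}}(\xi,\tau),
\]
so the estimate reduces to its classical counterpart $\|\partial_x(V_1\cdots V_{k+1})\|_{X^{1,b-1+\delta}}\lesssim\prod_j\|V_j\|_{X^{1,b}}$. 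This last inequality holds because $s=1$ lies strictly above the scaling-critical Sobolev exponent $\frac{k-4}{2k}$ for $k$-gKdV; for $k\geq 4$ it is not obtained by naive Leibniz bookkeeping but via a Fourier-restriction analysis (dyadic decomposition into frequency-interaction regimes, Airy Strichartz and Kato smoothing bounds), as carried out in \cite{BalPan}. \emph{This classical multilinear estimate is the main obstacle}; the Gevrey transfer above is comparatively soft.

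Granting the three estimates, the map $\Phi$ defined by the right-hand side above satisfies, on the ball $B_R=\{\|u\|_{X_T^{\sigma_0,1,b}}\leq R\}$ with $R=2C\|u_0\|_{G^{\sigma_0,1}}$, the bounds $\|\Phi(u)\|_{X_T^{\sigma_0,1,b}}\leq C\|u_0\|_{G^{\sigma_0,1}}+C'T^{\delta}R^{k+1}$ and $\|\Phi(u)-\Phi(v)\|_{X_T^{\sigma_0,1,b}}\leq C'T^{\delta}R^{k}\|u-v\|_{X_T^{\sigma_0,1,b}}$ (the latter from multilinearity, writing the difference of products as a telescoping sum). Choosing $T=T_0$ with $C'T_0^{\delta}R^{k}\leq\tfrac12$ makes $\Phi$ a contraction on $B_R$; solving for $T_0$ gives $T_0\simeq (1+\|u_0\|_{G^{\sigma_0,1}})^{-k/\delta}$, which is of the stated form \eqref{eq:T0kdv} with $a=k/\delta>1$. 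The fixed point is the desired solution; uniqueness in $C([-T_0,T_0];G^{\sigma_0,1})\cap X_{T_0}^{\sigma_0,1,b}$ follows from the same Lipschitz bound, continuity in time from the embedding $X^{\sigma_0,1,b}\hookrightarrow C(\R;G^{\sigma_0,1})$ valid for $b>\tfrac12$, and the bound \eqref{eq:estukdv} by feeding $\|u\|_{X_{T_0}^{\sigma_0,1,b}}=\|\Phi(u)\|_{X_{T_0}^{\sigma_0,1,b}}\leq C\|u_0\|_{G^{\sigma_0,1}}+\tfrac12\|u\|_{X_{T_0}^{\sigma_0,1,b}}$ back into itself.
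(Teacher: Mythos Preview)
The paper does not prove this statement: Theorem \ref{teo:localkdv} is quoted verbatim from \cite[Theorem 1.1]{BalPan} and used as a black box, so there is no proof in the paper to compare against. Your outline is a correct reconstruction of the standard argument for local well-posedness in Gevrey--Bourgain spaces: the linear estimates reduce to the classical ones since $m_{\sigma_0}(D_x)$ commutes with the Airy group, and the multilinear estimate transfers from $X^{1,b}$ to $X^{\sigma_0,1,b}$ via the subadditivity bound $m_{\sigma_0}(\xi)\lesssim\prod_j m_{\sigma_0}(\xi_j)$ on the convolution hyperplane, leaving as the only genuine input the classical estimate $\|\partial_x(V_1\cdots V_{k+1})\|_{X^{1,b-1+\delta}}\lesssim\prod_j\|V_j\|_{X^{1,b}}$, which you correctly flag as the substantive step and attribute to \cite{BalPan}. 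This is precisely the scheme used in that reference (and in the parallel NLS result \cite{AKS} cited for Theorem \ref{teo:localsch}), so your sketch matches the intended argument.
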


\subsection{Almost conserved quantity}
In order to control the radius of analyticity over an arbitrary time interval $[-T, \, T]$, we introduce an almost conserved quantity associated to the \eqref{eq:gKdV} equation. Taking into consideration the conserved quantities \eqref{eq:masskdv} and \eqref{eq:energykdv}, we define
\begin{equation}\label{eq:almost conservated}
E_\sigma (t)= \int (m_\sigma(D_x)u)^2 dx + \int (\partial_x (m_\sigma(D_x)u))^2dx + \frac{2}{(k+1)(k+2)}\int (m_\sigma(D_x)u)^{k+2}dx.
\end{equation}

Note that, for $\sigma=0$, \eqref{eq:almost conservated} turns out to be the sum of the conserved quantities \eqref{eq:masskdv} and \eqref{eq:energykdv}. However, for $\sigma>0$, this quantity fails to be conserved and the goal is to find a growth  estimate for $E_\sigma (t)$.   For this purpose,  let $U=m_\sigma(D_x)u$ and start by differentiating $E_\sigma (t)$ with respect to $t$, 

\begin{equation}\label{eq:partial E}
        \frac{d}{dt}(E_\sigma(t))=2\int U \partial_tUdx+2\int \partial_x U \partial_x(\partial_t U) dx + \frac{2}{k+1}\int U^{k+1}\partial_t U dx.
    \end{equation}
    Applying the operator $m_\sigma(D_x)$ to the \eqref{eq:gKdV} equation, we get
    \begin{equation}\label{eq:op in gkdv}
        \partial_t U +\partial_x^3 U - U^k\partial_x U=F(U),
        \end{equation}
        where $F(U)$ is given by
\begin{equation}\label{eq:F}
F(U)=-\frac{1 }{k+1}\partial_x\left[U^{k+1}-m_\sigma(D_x)\big(m_\sigma^{-1} (D_x)U)^{k+1}\big)\right].
\end{equation}       
 Using \eqref{eq:op in gkdv} in each term of \eqref{eq:partial E}, one has
\begin{equation*}
    \begin{split}
            \int U\partial_tUdx&=-\int U\partial_x^3Udx+ \frac{1}{k+2}\int\partial_x(U^{k+2})dx+\int UF(U)dx,\\
            \int \partial_xU \partial_x(\partial_tU)dx&=-\int\partial_xU\partial_x^4Udx+\int \partial_xU\partial_x(U^k\partial_xU)dx+\int \partial_xU\partial_x(F(U))dx,\\
            \int U^{k+1}\partial_t Udx&=-\int U^{k+1}\partial_x^3Udx+\int U^{2k+1}\partial_xUdx+\int U^{k+1}F(U)dx.
    \end{split}
\end{equation*}
We can assume that $U$ and all its partial derivatives tend to zero as $|x| \rightarrow \infty$ (see \cite{SS} for a detailed argument) and it follows from integration by parts that
\begin{align}
        \int U\partial_tUdx&=\int UF(U)dx, \label{eq:1a}\\
        \int \partial_xU \partial_x(\partial_tU)dx &=\frac{1}{k+1}\int U^{k+1}\partial_x^3U dx+\int \partial_xU\partial_x(F(U))dx,\label{eq:1b}\\
        \int U^{k+1}\partial_t Udx&=-\int U^{k+1}\partial_x^3Udx+\int U^{k+1}F(U)dx \label{eq:1c}.
\end{align}
Now, inserting \eqref{eq:1a}, \eqref{eq:1b} and \eqref{eq:1c} in \eqref{eq:partial E}, we get
\begin{equation}\label{eq:partial E new}
    \frac{d}{dt}E_\sigma(t)=2\int UF(U)dx + 2\int \partial_xU\partial_x(F(U))dx + \frac{2}{k+1}\int U^{k+1}F(U)dx.
\end{equation}
Integrating \eqref{eq:partial E new} in time over $[0, t']$ for $0<t'\leq T$, we obtain
\begin{equation}\label{eq:estimate E}
    E_\sigma(t') =E_\sigma(0)+R_\sigma(t'),
\end{equation}
where
\begin{equation}\label{eq:R}
\begin{split}
    R_\sigma(t')=& \,\, 2\iint \chi_{[0,\,t']}UF(U)dxdt + 2\iint \chi_{[0,\,t']}\partial_xU\partial_x(F(U))dxdt \,\, +\\
    & + \frac{2}{k+1}\iint \chi_{[0,\,t']}U^{k+1}F(U)dxdt.
    \end{split} 
\end{equation}

Now, we have to estimate each term of $|R_\sigma(t')|$ for all $0<t'\leq T$. For this purpose, we use the Cauchy-Schwarz inequality and Lemma \ref{lemma:restriction} and obtain that, for any $b=\frac{1}{2}+\epsilon$, $0<\epsilon \ll 1$, there exists $C>0$ such that
\begin{equation}\label{eq:estimate first term}
    \begin{split}
    \left|\iint \chi_{[0,\,t']}UF(U)dxdt\right| &\leq \|\chi_{[0,\,t']}U\|_{X^{1,1-b}}\|\chi_{[0,\,t']}F(U)\|_{X^{-1,b-1}}\\
    &\leq C\|U\|_{X_T^{1,b}}\|F(U)\|_{X_T^{-1,b-1}},
    \end{split}
\end{equation}

\begin{equation}\label{eq:estimate second term}
    \begin{split}
    \left|\iint \chi_{[0,\,t']}\partial_xU\partial_x(F(U))dxdt\right|& \leq \|\chi_{[0,\,t']}\partial_xU\|_{X^{0, \, 1-b}}\|\chi_{[0,\,t']}\partial_x(F(U))\|_{X^{0, \, b-1}}\\
    & \leq C\|U\|_{X_T^{1, \, 1-b}}\|\partial_x(F(U))\|_{X_T^{0, \, b-1}},
    \end{split}
\end{equation}
and 
\begin{equation}\label{eq:estimate third term}
    \begin{split}
    \left|\iint \chi_{[0,\,t']}U^{k+1}F(U)dxdt\right| &\leq \|\chi_{[0,\,t']}U\|_{X^{1,1-b}}\|\chi_{[0,\,t']}U^kF(U)\|_{X^{-1,b-1}}\\
    &\leq C\|U\|_{X_T^{1,1-b}}\|U^kF(U)\|_{X_T^{-1,b-1}}
    \end{split}
\end{equation}
for all $0<t'\leq T$.

In sequel, we find estimates for the norms involving $F(U)$ in \eqref{eq:estimate first term}-\eqref{eq:estimate third term}. To that end, observe that
\begin{equation}\label{eq:transfF}
|\widehat{F(U)}(\xi, \tau)| \lesssim|\xi|\int_{*} \Bigg|1-m_\sigma(\xi)\prod_{j=1}^{k}\frac{1}{m_\sigma(\xi_j)}\Bigg| |\widehat{U}(\xi_1,\tau_1)| \cdots |\widehat{U}(\xi_{k+1},\tau_{k+1})|,
\end{equation}
where $*$ denotes the integral over the set $\xi=\xi_1 + \cdots +\xi_{k+1}$ and $\tau=\tau_1 + \cdots +\tau_{k+1}$.
 
Without loss of generality, we can assume that $|\xi_{k+1}| \leq \cdots \leq |\xi_{1}|$. Consequently, by Lemma \ref{lemma:m},

\begin{equation}\label{eq:tranfFnova}
|\widehat{F(U)}(\xi, \tau)| \lesssim\sigma^{\theta}|\xi|\int_{*} |\xi_1|^{\theta-1}|\xi_2\|\widehat{U}(\xi_1,\tau_1)| \cdots |\widehat{U}(\xi_{k+1},\tau_{k+1})|
\end{equation}
for any $\theta\geq 1$.

Now, setting $\widehat{w}(\xi, \, \tau)=|\widehat{U}(\xi,\tau)|$ and using \eqref{eq:tranfFnova},
\begin{equation}\label{eq:normF}
\begin{split}
  \|F(U)\|_{X^{-1,b-1}}  &\lesssim \sigma^{\theta} \left\| \langle\xi\rangle^{-1}\langle \tau-\xi^3\rangle^{b-1} |\xi|\int_{*} |\xi_1|^{\theta-1}|\xi_2| |\widehat{U}(\xi_1,\tau_1)| \cdots |\widehat{U}(\xi_{k+1},\tau_{k+1})|\right\|_{L_{\xi}^2 L_{\tau}^2}\\
    &\sim \sigma^{\theta} \|\partial_x\big(|D_x|^{\theta-1}w\cdot |D_x| w\cdot w^{k-1}\big)\|_{X^{-1,b-1}}.
\end{split}
\end{equation}
Moreover,
\begin{equation}\label{eq:normpartialF}
\begin{split}
  \|\partial_xF(U)\|_{X^{0,b-1}}  &\lesssim \sigma^{\theta} \left\|\langle \tau-\xi^3\rangle^{b-1} |\xi|^2\int_{*} |\xi_1|^{\theta-1}|\xi_2| |\widehat{U}(\xi_1,\tau_1)| \cdots |\widehat{U}(\xi_{k+1},\tau_{k+1})|\right\|_{L_{\xi}^2 L_{\tau}^2}\\
    &=\sigma^{\theta} \|\partial_x\big(|D_x|^{\theta-1}w\cdot |D_x| w\cdot w^{k-1}\big)\|_{X^{1,b-1}}.
\end{split}
\end{equation}
For the norm in \eqref{eq:estimate third term}, we have that
\begin{equation}\label{eq:U4F}
\begin{split}
&|\widehat{U^kF(U)}(\xi.\tau)|\\
&\leq \int |\widehat{U^k}(\xi-\widetilde{\xi},\tau-\widetilde{\tau})|\cdot|\widehat{F(U)}(\widetilde{\xi},\widetilde{\tau})|d\widetilde{\xi}d\widetilde{\tau}\\
&\lesssim \sigma^{\theta}\int |\widehat{U}|\ast\cdots \ast|\widehat{U}| (\xi-\widetilde{\xi},\tau-\widetilde{\tau})\cdot|\widetilde{\xi}|\left(\int_{\ast}|\xi_1|^{\theta-1}|\xi_2| |\widehat{U}(\xi_1,\tau_1)| \cdots |\widehat{U}(\xi_{k+1},\tau_{k+1})|\right)d\widetilde{\xi}d\widetilde{\tau}\\
&\lesssim \sigma^{\theta}\int |\widehat{U}|\ast\cdots\ast|\widehat{U}| (\xi-\widetilde{\xi},\tau-\widetilde{\tau})\cdot\left(\int_{\ast}|\xi_1|^{\theta}|\xi_2| |\widehat{U}(\xi_1,\tau_1)| \cdots |\widehat{U}(\xi_{k+1},\tau_{k+1})|\right)d\widetilde{\xi}d\widetilde{\tau}\\
&= \sigma^{\theta}\int \big(w^k \big)^\wedge(\xi-\widetilde{\xi},\tau-\widetilde{\tau})\cdot\big(|D_x|^{\theta}w\cdot |D_x| w\cdot  w^{k-1}\big)^\wedge(\widetilde{\xi},\widetilde{\tau})d\widetilde{\xi}d\widetilde{\tau}\\
&=\sigma^{\theta}\big(|D_x|^{\theta}w\cdot |D_x| w\cdot w^{2k-1}\big)^\wedge(\xi,\tau).
\end{split}
\end{equation}
From \eqref{eq:U4F}, we find that
\begin{equation}
\|U^kF(U)\|_{X^{-1,b-1}}\lesssim\sigma^{\theta} \\|D_x|^{\theta}w\cdot |D_x| w\cdot w^{2k-1}\|_{X^{-1,b-1}}.
\end{equation}

Consequently, we aim to prove the nonlinear smoothing estimates
\begin{equation}\label{eq:aim1}
\|\partial_x\big(|D_x|^{\theta-1}w\cdot |D_x| w\cdot w^{k-1}\big)\|_{X^{1,b-1}}\lesssim \|w\|_{X^{1,b}}^{k+1}
\end{equation}
and
\begin{equation}\label{eq:aim2}
\||D_x|^{\theta}w\cdot |D_x| w\cdot w^{2k-1}\|_{X^{-1,b-1}}\lesssim \|w\|_{X^{1,b}}^{2k+1}.
\end{equation}
This is a consequence of the following result, whose proof will be postponed to Section \ref{sec:multilinear_kdv}.
\begin{lemma}\label{lem:multi_kdv}
    For any $k\ge 3$ and $\theta\in [1,2)$, the estimates
    \begin{equation}\label{eq:aim1_mult}
\left\|\partial_x\big(|D_x|^{\theta-1}w_1\cdot |D_x| w_2\cdot \prod_{j=3}^{k+1}w_j\big)\right\|_{X^{1,b-1}}\lesssim \prod_{j=1}^{k+1}\|w_j\|_{X^{1,b}}
\end{equation}
and
\begin{equation}\label{eq:aim2_mult}
\left\||D_x|^{\theta}w_1\cdot |D_x| w_2\cdot \prod_{j=3}^{2k+1}w_j\right\|_{X^{-1,b-1}}\lesssim \prod_{j=1}^{2k+1}\|w_j\|_{X^{1,b}}^{2k+1}
\end{equation}
hold.
\end{lemma}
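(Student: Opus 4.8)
The plan is to reduce both \eqref{eq:aim1_mult} and \eqref{eq:aim2_mult} to frequency-restricted estimates in the spirit of \cite{COS} and then to verify those estimates by a careful case analysis on the relative sizes of the frequencies. I would begin by passing to the dual formulation: by duality, \eqref{eq:aim1_mult} is equivalent to a multilinear estimate of the form
\begin{equation*}
\left|\int_{*}\frac{|\xi|\langle\xi\rangle\,|\xi_1|^{\theta-1}|\xi_2|}{\langle\tau-L(\xi)\rangle^{1-b}\,\prod_{j=1}^{k+1}\langle\xi_j\rangle\langle\tau_j-L(\xi_j)\rangle^{b}}\,\prod_{j=0}^{k+1}g_j\right|\lesssim \prod_{j=0}^{k+1}\|g_j\|_{L^2_{\xi,\tau}},
\end{equation*}
where $g_0$ is the test function attached to the output and $*$ denotes the hyperplane $\xi=\xi_1+\dots+\xi_{k+1}$, $\tau=\tau_1+\dots+\tau_{k+1}$, and similarly for \eqref{eq:aim2_mult} with the appropriate $X^{-1,b-1}$ output weight. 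After relabeling, the worst interactions are the trilinear ones obtained by absorbing the remaining $k-2$ (resp.\ $2k-2$) factors $w_j$ via fractional Leibniz / Hölder in Bourgain spaces — here one uses that $X^{1,b}\hookrightarrow X^{1,b}$ is a (Banach) algebra-type product estimate in one space dimension for $b>\tfrac12$, which absorbs the low-order factors with a harmless loss and reduces matters to a genuinely trilinear estimate with two extra derivatives distributed on the two highest-frequency factors.

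The core trilinear estimate to establish is then, schematically,
\begin{equation*}
\big\||D_x|^{\theta-1}w_1\cdot|D_x|w_2\cdot w_3\big\|_{X^{0,b-1}}\lesssim \|w_1\|_{X^{1,b}}\|w_2\|_{X^{1,b}}\|w_3\|_{X^{1,b}}
\end{equation*}
(for the KdV phase $L(\xi)=\xi^3$) for $\theta<2$, and the analogous one with the Schrödinger phase. The key structural point emphasized in the introduction is that the two surplus derivatives are \emph{split}: one factor carries $|D_x|^{\theta-1}$ with $\theta-1<1$ and the other carries exactly $|D_x|$, which is covered by the $H^1$ norm. In the region where $|\xi_1|\sim|\xi_2|\gg|\xi_3|$ — the only dangerous one, since otherwise $|\xi_2|$ is comparable to a low frequency and the estimate is trivial — the net derivative surplus on the output is $\theta-1<1$, and one recovers it from the dispersive gain encoded in the resonance function. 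For KdV one uses the factorization $\tau-\xi^3-\sum(\tau_j-\xi_j^3)=3(\xi_1+\xi_2)(\xi_2+\xi_3)(\xi_3+\xi_1)$ (up to sign conventions), so that $\max_j\langle\tau_j-\xi_j^3\rangle\gtrsim |\xi_1|^2|\xi_3|$ in the high-high-low regime; combined with the bilinear Strichartz / $L^4$ refinement of Bourgain and the standard $X^{s,b}$ product machinery, this provides the $|\xi_1|^{2-\theta+}$ worth of smoothing needed. For the Schrödinger phase one argues identically with the resonance identity $\tau-(-\xi^2)-\sum(\tau_j+\xi_j^2) = -2(\xi_1+\xi_2)(\xi_1+\xi_3)$ type computation, again gaining $|\xi_1|\,|\xi_3|$ and using the one-dimensional bilinear Schrödinger estimate. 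The borderline $\theta\nearrow2$ is precisely where the gain $|\xi_1|^{2-\theta}$ degenerates, which is why the statement is restricted to $\theta\in[1,2)$; a small $\epsilon$ of room always remains and is absorbed into the $b=\tfrac12+\epsilon$ budget.

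I expect the main obstacle to be the bookkeeping of the non-resonant interactions when the two derivative-loaded factors $w_1,w_2$ are \emph{not} the two highest frequencies — e.g.\ when $|\xi_3|$ (a factor with no derivatives) is comparable to $|\xi_1|$, so that the naive derivative count on the output looks like $\theta-1+1 = \theta$ but must be redistributed. The resolution is that in such a configuration there is a large output frequency balanced against a large input frequency carrying only the $\langle\xi_j\rangle$ from the $X^{1,b}$ norm, and the gap is closed by the same resonance estimate plus a Cauchy–Schwarz in the modulation variables; still, one must treat separately the sub-cases according to which modulation $\langle\tau_j-L(\xi_j)\rangle$ is maximal, and verify in each that the resulting exponent on $|\xi_1|$ stays strictly below what the $L^2$ bounds can afford. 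A secondary technical point is ensuring all the reductions are uniform in the number of low-order factors $k$ (resp.\ $2k$), which follows because each such factor costs only a fixed power of its own $X^{1,b}$ norm via the algebra property and contributes no derivative loss. Once the trilinear estimates are in hand, reassembling them with the product lemma yields \eqref{eq:aim1_mult}–\eqref{eq:aim2_mult} and hence, via \eqref{eq:normF}–\eqref{eq:U4F}, the desired almost-conservation law with $\theta=2^-$.
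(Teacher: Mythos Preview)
Your overall intuition is right --- the gain comes from the size of the KdV resonance and is captured by frequency-restricted estimates --- but the reduction step has a genuine gap. You propose to absorb the low-order factors $w_3,\dots,w_{k+1}$ into a single factor via an ``algebra-type product estimate'' in $X^{1,b}$ and then prove a trilinear bound. The Bourgain space $X^{1,b}$ associated to the Airy group is \emph{not} an algebra: there is no inequality $\|\prod_{j\ge3}w_j\|_{X^{1,b}}\lesssim \prod_{j\ge3}\|w_j\|_{X^{1,b}}$, and any weakening of the output to $X^{1,b'}$ with $b'<\tfrac12$ destroys the modulation budget needed in the subsequent trilinear step. Even setting this aside, the ``core trilinear estimate'' you write has output in $X^{0,b-1}$, which silently drops both the outer $\partial_x$ and the $\langle\xi\rangle$ weight of $X^{1,b-1}$; the honest trilinear multiplier in the all-comparable regime is of size $|\xi|^{\theta}$, not $|\xi|^{\theta-1}$, so your derivative count is off by two. (The aside about ``the analogous one with the Schr\"odinger phase'' is also out of place here --- this lemma is purely about $L(\xi)=\xi^3$.)

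The paper does \emph{not} reduce to a trilinear estimate. It uses the induction-in-$k$ machinery of \cite{CorLei} (Corollary~\ref{cor:indk}), which reduces the $(k{+}1)$-linear estimate to a \emph{flexible} frequency-restricted estimate for a fixed $k_0$, verified for all $k_0$-descents of the phase on the deformed hyperplane $\Gamma_\xi^\sigma$. For \eqref{eq:aim1_mult} they take $k_0=4$ (five inputs) and check the FRE directly via the change of variables $\xi_1\mapsto\Phi$, using $|\partial_{\xi_1}\Phi|\gtrsim|\xi|^2$; the additional decay from $\langle\xi_4\rangle^{-1},\langle\xi_5\rangle^{-1}$ is precisely what makes the remaining $d\xi_3$-integration converge, which is why a bare trilinear reduction is awkward. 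For \eqref{eq:aim2_mult} the argument is far simpler than you suggest: because the output sits in $X^{-1,b-1}$, the multiplier is dominated by the classical one $|\xi_1|\langle\xi_1\rangle\big/\big(\langle\xi\rangle\prod_{j\ge2}\langle\xi_j\rangle\big)$ and \emph{no} smoothing is required --- one just quotes the standard multilinear estimate from \cite{CorLei}. Your proposal treats both estimates on the same footing and misses this simplification.
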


As a consequence, we prove that the quantity $E_{\sigma}(t)$ defined in \eqref{eq:almost conservated} is almost conserved.


\begin{corollary}\label{cor: almost conserved quantity}
Let $\sigma>0$,  $k$ be even, and $\theta \in [1,2)$. Then there exists $C>0$ such that the local solution $u \in X_T^{ \sigma, \, 1, \, b}$ to the IVP \eqref{eq:gKdV} given by Theorem \ref{teo:localkdv} satisfies
\begin{equation}\label{eq:estimate energy}
    \sup_{t\in [0,\,T]} E_\sigma (t) \leq E_\sigma (0) + C\sigma^{\theta} E_\sigma (0)^{\frac{k}{2}+1}(1+E_\sigma (0)^{\frac{k}{2}}).
\end{equation}
\end{corollary}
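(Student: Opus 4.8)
The plan is to chain together the identity \eqref{eq:estimate E}, the triangle-inequality bounds \eqref{eq:estimate first term}--\eqref{eq:estimate third term}, the Fourier-side reductions \eqref{eq:normF}--\eqref{eq:U4F}, the multilinear smoothing estimates of Lemma \ref{lem:multi_kdv}, and finally the local-theory bound \eqref{eq:estukdv} together with the equivalence of the $E_\sigma$ functional with the $G^{\sigma,1}$ norm. Concretely, starting from \eqref{eq:estimate E} one has $\sup_{t\in[0,T]}E_\sigma(t)\le E_\sigma(0)+\sup_{0<t'\le T}|R_\sigma(t')|$, and $|R_\sigma(t')|$ is bounded by the three terms in \eqref{eq:estimate first term}--\eqref{eq:estimate third term}. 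Into the first two terms one feeds \eqref{eq:normF}, \eqref{eq:normpartialF} and the smoothing estimate \eqref{eq:aim1} (i.e. \eqref{eq:aim1_mult} with all $w_j=w$), and into the third term one feeds \eqref{eq:U4F} and \eqref{eq:aim2} (i.e. \eqref{eq:aim2_mult} with all $w_j=w$); here $w$ is the function with $\widehat w=|\widehat U|$, so $\|w\|_{X^{1,b}}=\|U\|_{X^{1,b}}=\|u\|_{X^{\sigma,1,b}}$. This yields
\begin{equation*}
\sup_{t\in[0,T]}E_\sigma(t)\le E_\sigma(0)+C\sigma^\theta\big(\|u\|_{X_T^{\sigma,1,b}}^{k+2}+\|u\|_{X_T^{\sigma,1,b}}^{2k+2}\big).
\end{equation*}

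The remaining step is bookkeeping to pass from the $X_T^{\sigma,1,b}$ norm to $E_\sigma(0)$. First, Theorem \ref{teo:localkdv} gives $\|u\|_{X_T^{\sigma,1,b}}\le C\|u_0\|_{G^{\sigma,1}}$ on the local existence interval (with $T=T_0$); and one observes directly from the definitions \eqref{eq:Gevrey2} and \eqref{eq:almost conservated} that the quadratic part of $E_\sigma(0)$ equals $\|u_0\|_{G^{\sigma,1}}^2$ up to equivalence of the weights $\langle\xi\rangle^2$ and $1+\xi^2$, while the remaining term $\frac{2}{(k+1)(k+2)}\int (m_\sigma(D_x)u_0)^{k+2}\,dx$ is nonnegative since $k$ is even, so $\|u_0\|_{G^{\sigma,1}}^2\lesssim E_\sigma(0)$; hence $\|u\|_{X_T^{\sigma,1,b}}^2\lesssim E_\sigma(0)$. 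Substituting, $\|u\|_{X_T^{\sigma,1,b}}^{k+2}\lesssim E_\sigma(0)^{k/2+1}$ and $\|u\|_{X_T^{\sigma,1,b}}^{2k+2}\lesssim E_\sigma(0)^{k+1}=E_\sigma(0)^{k/2+1}E_\sigma(0)^{k/2}$, which combine to give exactly the right-hand side of \eqref{eq:estimate energy}. I would also note that $E_\sigma(t)$ controls $\|m_\sigma(D_x)u(t)\|_{H^1}^2$ from below (again by positivity of the $(k+2)$-power term), so the statement is genuinely an a priori bound on the Gevrey norm.

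I expect the main subtlety — rather than a true obstacle, since all the hard analysis is already packaged in Lemma \ref{lem:multi_kdv} — to lie in the matching of the precise $X^{s,b}$ indices across \eqref{eq:estimate first term}--\eqref{eq:estimate third term} and \eqref{eq:aim1}--\eqref{eq:aim2}: the second term \eqref{eq:estimate second term} requires $\|\partial_x F(U)\|_{X_T^{0,b-1}}$ which by \eqref{eq:normpartialF} is $\sigma^\theta\|\partial_x(|D_x|^{\theta-1}w\cdot|D_x|w\cdot w^{k-1})\|_{X^{1,b-1}}$, matching \eqref{eq:aim1} with the top index $1$, whereas the first term needs $\|F(U)\|_{X_T^{-1,b-1}}$ matching \eqref{eq:aim2}; one must also check that $\|\chi_{[0,t']}U\|_{X^{1,1-b}}\le C\|U\|_{X_T^{1,b}}$ via Lemma \ref{lemma:restriction} (using $b>1/2$ so that $1-b\in(-1/2,1/2)$), and absorb the $\chi_{[0,t']}$ factors on the nonlinear side into $\|U\|_{X_T}$ in the same way. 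A second minor point is ensuring $k\ge 4$ even implies $k\ge 3$ so that Lemma \ref{lem:multi_kdv} applies, and that $\theta\in[1,2)$ is admissible there. Once these index identifications are in place, the corollary follows by collecting constants (depending on $k$, $b$, and the implicit constants of the lemmas), so I would present it as a short deduction rather than a computation.
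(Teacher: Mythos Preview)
Your proposal is correct and follows exactly the same route as the paper: combine \eqref{eq:estimate E}--\eqref{eq:estimate third term} with the Fourier-side reductions \eqref{eq:normF}--\eqref{eq:U4F} and Lemma~\ref{lem:multi_kdv} to reach the intermediate bound $E_\sigma(0)+C\sigma^\theta\|u\|_{X_T^{\sigma,1,b}}^{k+2}(1+\|u\|_{X_T^{\sigma,1,b}}^{k})$, then use positivity of the $(k+2)$-power term (for $k$ even) to get $\|u_0\|_{G^{\sigma,1}}^2\le E_\sigma(0)$ and close with the local bound \eqref{eq:estukdv}. One small slip in your last paragraph: the first term \eqref{eq:estimate first term} requires $\|F(U)\|_{X_T^{-1,b-1}}$, which via \eqref{eq:normF} is the $X^{-1,b-1}$ norm of $\partial_x(|D_x|^{\theta-1}w\cdot|D_x|w\cdot w^{k-1})$ and is therefore controlled by (the stronger) \eqref{eq:aim1}, not by \eqref{eq:aim2} --- exactly as you said correctly in your main argument.
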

\begin{proof}
By \eqref{eq:estimate first term}-\eqref{eq:estimate third term} and Lemma \ref{lem:multi_kdv},
\begin{equation}\label{eq:conserved quantity}
    \sup_{t\in [0,\,T]} E_\sigma (t) \leq E_\sigma (0) + C\sigma^{\theta}\|u\|_{X_T^{\sigma, \,1, \, b}}^{k+2}(1+\|u\|_{X_T^{\sigma, \,1, \, b}}^{k}).
\end{equation}
    Since $k$ is even, \eqref{eq:almost conservated} implies
    \begin{equation}\label{eq:conserved defocusing}
    E_\sigma (0)= \|u_0\|_{G^{\sigma, \, 1}}^2+\frac{2}{(k+1)(k+2)}\|m_\sigma (D_x)u_0\|_{L^{k+2}_x}^{k+2} \geq \|u_0\|_{G^{\sigma, \, 1}}^2.
\end{equation}
Together with \eqref{eq:estukdv} and \eqref{eq:conserved defocusing}, we get
\begin{equation}\label{eq:estimate u energy}
    \|u\|_{X_T^{\sigma, \,1, \, b}}\leq C E_\sigma (0)^{\frac{1}{2}}.
\end{equation}
The required estimate now follows from \eqref{eq:conserved quantity} and  \eqref{eq:estimate u energy}.
\end{proof}

\subsection{Global result and evolution of the radius of analyticity} Once we are in possession of the almost conservation law \eqref{eq:estimate energy}, we can prove Theorem \ref{teo:globalkdv}. The argument is by now standard (see, for example, \cite{AKS, BalPan, SelDaSilva, SelTes}), we present the proof for the sake of completeness.

\begin{proof}[Proof of Theorem \ref{teo:globalkdv}]

Fix $\sigma_0>0$, $k\in 2\N$, $k\geq 4$,  and $u_0 \in G^{\sigma_0, \, 1}(\R)$.
By invariance of the gKdV equation under reflection $(t,\,x) \rightarrow (-t,\,-x)$, it suffices to consider $t \geq 0$. Thus, we have to prove that, for $\theta\in [1,2)$, the local solution $u$ given by the Theorem \ref{teo:localkdv} can be extended to any time interval $[0,\,T]$ and satisfies
\begin{equation*}
    u \in C([0,\,T]:G^{\sigma(T),1}) \quad \text{for all } T>0,
\end{equation*}
where
\begin{equation}\label{eq:sigma T}
\sigma(T)\geq cT^{-\frac{1}{\theta}}
\end{equation}
and $c>0$ is a constant depending on $\|u_0\|_{G^{\sigma_0, \, 1}}, \, \sigma_0, \, k,$ and $\theta$.



For $0<\sigma\leq\sigma_0$, using Gagliardo-Nirenberg-Sobolev inequality, we have
\begin{align}
    \begin{split}
    E_{\sigma} (0)&= \|u_0\|_{G^{\sigma, \, 1}}^2+\frac{2}{(k+1)(k+2)}\|e^{\sigma |D_x|}u_0\|_{L^{k+2}}^{k+2}\\
    &\leq \|u_0\|_{G^{\sigma, \, 1}}^2+M\|D_x(m_\sigma(D_x)u_0)\|_{L^{2}}^{\frac{k}{2}}\|m_\sigma(D_x)u_0\|_{L^{2}}^{\frac{k}{2}+2}\\
    &\leq \|u_0\|_{G^{\sigma, \,1}}^2+M\|u_0\|_{G^{\sigma,\,1}}^{k+2}\label{eq:gn}\\
    &\leq \|u_0\|_{G^{\sigma_0, \,1}}^2+M\|u_0\|_{G^{\sigma_0,\,1}}^{k+2}\\
    &\leq M\left(E_{\sigma_0} (0)+E_{\sigma_0} (0)^{\frac{k}{2}+1}\right)
    \end{split}
\end{align}
for some constant $M\geq 1$. In particular, by \eqref{eq:conserved defocusing}, we have 
\begin{equation}\label{eq:control_u_0}
\|u_0\|_{G^{\sigma,\,1}}^2 \leq    E_\sigma(0)\le 2M\left(E_{\sigma_0} (0)+E_{\sigma_0} (0)^{\frac{k}{2}+1}\right).
\end{equation}
Having \eqref{eq:control_u_0}, Theorem \ref{teo:localkdv} implies that  the local lifespan in $G^{\sigma,1}(\R)$ is larger than
\begin{equation}\label{eq:delta}
    \delta=\frac{c_0}{\left(1+2M\left(E_{\sigma_0} (0)+E_{\sigma_0} (0)^{\frac{k}{2}+1}\right)\right)^a}.
\end{equation}

Let $T\geq \delta$. To cover the interval of time $[0,T]$, will repeatedly use  Theorem \ref{teo:localkdv} and Corollary \ref{cor: almost conserved quantity} with the time step $\delta$. To that end, we must ensure that estimate \eqref{eq:gn} holds for each step.

Take $n \in \N$ such that $T \in [n\delta,\,(n+1)\delta)$. We will show, by induction, that, for $j \in \{1, \cdots,\, n\}$,
\begin{equation}\label{eq:induction 1}
    \sup_{t \in [0,\, j\delta]} E_\sigma (t) \leq E_{\sigma}(0)+CM^{k+1}j\sigma^{\alpha} \left(E_{\sigma_0} (0)+E_{\sigma_0} (0)^{\frac{k}{2}+1}\right)^{\frac{k}{2}+1}\left(1+\left(E_{\sigma_0} (0)+E_{\sigma_0} (0)^{\frac{k}{2}+1}\right)^{\frac{k}{2}}\right)
\end{equation}
and
\begin{equation}\label{eq:induction 2}
    \sup_{t \in [0,\, j\delta]} E_\sigma (t) \leq 2M\left(E_{\sigma_0} (0)+E_{\sigma_0} (0)^{\frac{k}{2}+1}\right), 
\end{equation}
under the smallness conditions
\begin{equation}\label{eq: condition 1}
    \sigma \leq \sigma_0
\end{equation}
and
\begin{equation}\label{eq: condition 2}
    M^{k}\frac{T}{\delta}C\sigma^{\theta}\left(E_{\sigma_0} (0)+E_{\sigma_0} (0)^{\frac{k}{2}+1}\right)^\frac{k}{2}\left(1+\left(E_{\sigma_0} (0)+E_{\sigma_0} (0)^{\frac{k}{2}+1}\right)^\frac{k}{2}\right) \leq 1,
\end{equation}
where $C>0$ is the constant in Corollary \ref{cor: almost conserved quantity}.

In the first step, we cover the interval $[0,\, \delta]$ and, by Corollary \ref{cor: almost conserved quantity} and \eqref{eq:gn},
\begin{align}
     \sup_{t\in [0,\,\delta]} E_\sigma (t) &\leq E_\sigma (0) + C\sigma^{\theta} E_\sigma (0)^{\frac{k}{2}+1}\left(1+E_\sigma (0)^{\frac{k}{2}}\right)\label{eq:indiction base}\\
    &\leq E_\sigma (0) + C\sigma^{\theta} M^{k+1}\left(E_{\sigma_0} (0)+E_{\sigma_0} (0)^{\frac{k}{2}+1}\right)^{\frac{k}{2}+1}\left(1+\left(E_{\sigma_0} (0)+E_{\sigma_0} (0)^{\frac{k}{2}+1}\right)^{\frac{k}{2}}\right).\nonumber
\end{align}
Using \eqref{eq: condition 2}, we conclude that
\begin{equation*}
    \begin{split}
    \sup_{t \in [0,\, \delta]} E_\sigma (t)  \leq 2M\left(E_{\sigma_0} (0)+E_{\sigma_0} (0)^{\frac{k}{2}+1}\right).
    \end{split}
\end{equation*}
Now, assume that \eqref{eq:induction 2} holds for all $j \in \{1,\cdots,\, n-1\}$. For $j+1$, applying the local well posedness result from Theorem \ref{teo:localkdv} with initial data $u(j\delta)$, the almost conservation law \eqref{eq:estimate energy} and the estimates \eqref{eq:induction 1}-\eqref{eq:induction 2}, we have
\begin{align}
    & \quad\sup_{t \in [j\delta, (j+1)\delta]}E_\sigma (t)\nonumber\\&\leq E_\sigma(j\delta) + C\sigma^{\theta} E_\sigma (j\delta)^{\frac{k}{2}+1}(1+E_\sigma (j\delta)^{\frac{k}{2}})\label{eq:induction passage}\\
&\leq E_\sigma(j\delta) + CM^{k+1}\sigma^{\theta} \left(E_{\sigma_0} (0)+E_{\sigma_0} (0)^{\frac{k}{2}+1}\right)^{\frac{k}{2}+1}\left(1+\left(E_{\sigma_0} (0)+E_{\sigma_0} (0)^{\frac{k}{2}+1}\right)^{\frac{k}{2}}\right)\nonumber\\
&\leq E_\sigma(0) + CM^{k+1}(j+1)\sigma^{\theta} \left(E_{\sigma_0} (0)+E_{\sigma_0} (0)^{\frac{k}{2}+1}\right)^{\frac{k}{2}+1}\left(1+\left(E_{\sigma_0} (0)+E_{\sigma_0} (0)^{\frac{k}{2}+1}\right)^{\frac{k}{2}}\right)\nonumber
\end{align}
Moreover, since
\begin{equation*}
    j+1 \leq n \leq \frac{T}{\delta} ,
\end{equation*}
it follows from the condition \eqref{eq: condition 2} and \eqref{eq:induction passage} that
\begin{equation*}
    \begin{split}
  \sup_{t \in [j\delta,\,(j+1)\delta]}E_\sigma(t) \leq 2 M\left(E_{\sigma_0} (0)+E_{\sigma_0} (0)^{\frac{k}{2}+1}\right).
    \end{split}
\end{equation*}

Thus, we proved the local solution can be extended to the interval $[0,T]$ as long as \eqref{eq: condition 1}-\eqref{eq: condition 2} hold. These conditions are satisfied for 
\begin{equation*}
    \sigma=\min\left\{\sigma_0, \ \left[\frac{\delta}{M^{k}TC\left(E_{\sigma_0} (0)+E_{\sigma_0} (0)^{\frac{k}{2}+1}\right)^\frac{k}{2}\left(1+\left(E_{\sigma_0} (0)+E_{\sigma_0} (0)^{\frac{k}{2}+1}\right)^\frac{k}{2}\right)}\right]^{\frac{1}{\theta}}\right\}.
\end{equation*}
In particular, we conclude that the uniform radius of analyticity satisfies
$$
\sigma(T)\ge \sigma \gtrsim T^{-\frac 1\theta}.
$$
The proof is concluded by taking $\theta=2^-.$
\end{proof}

\section{Radius of analyticity for the nonlinear Schrödinger equation}\label{sec:NLS}

We now move our focus to the estimate on the lower bounds for the radius of analyticity for the nonlinear Schrödinger equation. The procedure is completely parallel to that for the generalized KdV equation.

\subsection{Local well-posedness in the Gevrey space}\label{subsec:nls}
For the \eqref{eq:schrodinger}, the local well-posedness in the Gevrey class $G^{\sigma,1}(\R)$ has been shown by Ahn, Kim and Seo.
\begin{theorem}[{\cite[Theorem 4.1]{AKS}}]\label{teo:localsch}
    Let $p\geq 3$ be odd. Given $v_0 \in G^{\sigma_0,\, 1}(\R)$, there exists a time
    \begin{equation}
    T_0=T_0(\|v_0\|_{G^{\sigma,\,1}})=\frac{c_0}{(1+\|u_0\|_{G^{\sigma_0,1}}^2)^a} \quad c_0>0, \, a>1,
\end{equation}      such that the IVP \eqref{eq:gKdV} admits a unique solution $v \in C([-T_0,\,T_0]; G^{\sigma,\, 1}(\R))\cap X_{T_0}^{\sigma_0,1,b}$ satisfying
\begin{equation}\label{eq:estusch}
\|v\|_{X_{T_0}^{\sigma_0,1,b}}\leq C\|v_0\|_{G^{\sigma_0,\, 1}}.
\end{equation}
\end{theorem}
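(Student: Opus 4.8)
The plan is to solve the Duhamel formulation by a contraction argument in the restricted Gevrey--Bourgain space $X_{T_0}^{\sigma_0,1,b}$ with $b=\tfrac12+\epsilon$ for a small $\epsilon>0$. Writing $e^{it\partial_x^2}$ for the Schr\"odinger group and $N(v)=|v|^{p-1}v$, one studies the map
\[
\Phi(v)(t)=\psi(t)\,e^{it\partial_x^2}v_0-i\,\psi(t/T)\int_0^t e^{i(t-s)\partial_x^2}N(v)(s)\,ds ,
\]
where $\psi\in C_c^\infty(\R)$ equals $1$ near $0$. First I would record the two standard linear estimates in Bourgain spaces, which transfer verbatim to the weighted setting through \eqref{eq:exponential} (apply $m_\sigma(D_x)$ and use that it commutes with $e^{it\partial_x^2}$): the homogeneous bound $\|\psi\, e^{it\partial_x^2}v_0\|_{X^{\sigma_0,1,b}}\lesssim\|v_0\|_{G^{\sigma_0,1}}$ and the inhomogeneous ``energy'' bound $\big\|\psi(\cdot/T)\int_0^{\cdot}e^{i(\cdot-s)\partial_x^2}F\,ds\big\|_{X^{\sigma_0,1,b}}\lesssim T^{\epsilon'}\|F\|_{X^{\sigma_0,1,b-1}}$ for some $\epsilon'>0$, the power of $T$ being what ultimately yields the stated lifespan.

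The core of the argument is the multilinear estimate
\[
\|N(v)\|_{X^{\sigma_0,1,b-1}}\lesssim \|v\|_{X^{\sigma_0,1,b}}^{p}.
\]
Since $p$ is odd, $N(v)=v^{(p+1)/2}\,\overline{v}^{\,(p-1)/2}$ is a genuine polynomial in $v$ and $\overline v$; on the Fourier side this produces a $p$-fold convolution whose factors are $\widehat v(\xi_j,\tau_j)$ or $\overline{\widehat v}(-\xi_j,-\tau_j)$, subject to $\xi=\sum\xi_j$, $\tau=\sum\tau_j$. Because $m_\sigma$ is even and submultiplicative in the sense $m_\sigma(\xi_1+\dots+\xi_p)\lesssim\prod_j m_\sigma(\xi_j)$ (immediate from $e^{\sigma|\xi|}\le\prod_j e^{\sigma|\xi_j|}$ and $m_\sigma\simeq e^{\sigma|\cdot|}$), the weighted estimate reduces, after setting $|\widehat w|=m_\sigma|\widehat v|$, to the unweighted multilinear bound $\|\,w^{(\pm)}\cdots w^{(\pm)}\,\|_{X^{1,b-1}}\lesssim\|w\|_{X^{1,b}}^{p}$ in the classical Bourgain space $X^{1,b}(\R^2)$, with the complex conjugates being irrelevant as they only reflect frequencies. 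This last estimate is the standard subcritical multilinear estimate for the one-dimensional Schr\"odinger equation at regularity $s=1$, valid for every finite $p$ since $s_c=\tfrac12-\tfrac{2}{p-1}<1$, which I would either quote from the literature or prove by a dyadic decomposition together with the $L^4_{t,x}$ Strichartz/bilinear Bourgain estimate, placing all $p$ derivatives on the highest-frequency factor.

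With these in hand the proof is routine: fix $R=2C\|v_0\|_{G^{\sigma_0,1}}$ and choose $T_0$ of the form $c_0\big(1+\|v_0\|_{G^{\sigma_0,1}}^2\big)^{-a}$ with $a>1$ large enough so that $T_0^{\epsilon'}R^{p-1}$ is small; then the estimates above show $\Phi$ maps the closed ball of radius $R$ in $X_{T_0}^{\sigma_0,1,b}$ into itself and is a contraction there. Its unique fixed point is the desired solution, it satisfies \eqref{eq:estusch} by construction, the embedding $X^{\sigma_0,1,b}\hookrightarrow C([-T_0,T_0];G^{\sigma_0,1})$ for $b>\tfrac12$ gives the continuity in time, and uniqueness in the full class follows from a standard difference estimate. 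The main obstacle is the multilinear estimate: one must check that the single extra derivative carried by $X^{\cdot,1,\cdot}$ can be distributed onto the highest-frequency factor and absorbed using $H^1$-subcriticality in dimension one, uniformly in the (possibly large) power $p$, and that the passage to the unweighted space via the submultiplicativity of $m_\sigma$ loses nothing --- both points being precisely where the evenness and the $e^{\sigma|\xi|}$-structure of the weight enter.
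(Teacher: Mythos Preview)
Your proposal is a correct and standard route to this local well-posedness result, but there is nothing to compare against: in the paper this theorem is simply quoted from \cite{AKS} (see the attribution ``{\cite[Theorem 4.1]{AKS}}'' in the statement) and no proof is given. The contraction-mapping scheme you outline---linear estimates in $X^{\sigma_0,1,b}$, reduction of the Gevrey multilinear estimate to the unweighted one via the submultiplicativity $m_\sigma(\sum\xi_j)\lesssim\prod m_\sigma(\xi_j)$, and the classical $H^1$-subcritical bound $\|N(w)\|_{X^{1,b-1}}\lesssim\|w\|_{X^{1,b}}^p$---is exactly the argument used in \cite{AKS} (and earlier for $p=3$ in \cite{tas_nls}), so your sketch matches the cited source.
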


\subsection{Almost conserved quantity} 
Taking in consideration the conserved quantities \eqref{eq:masskdv} and \eqref{eq:energysch}, we define
\begin{equation}\label{eq:almostsch}
E_\sigma (t)= \int |m_\sigma(D_x)v|^2 dx + \int |\partial_x (m_\sigma(D_x)v)|^2dx +  \frac{2}{p+1}\int |m_\sigma(D_x)v|^{p+1}dx.
\end{equation}

For $\sigma>0$, \eqref{eq:almostsch} is not conserved and our idea is to find a decay  estimate for $E_\sigma (t)$.   For this purpose,  denoting $V=m_\sigma(D_x)v$, we have
\begin{equation}\label{eq:almostconserved_nls}
E_\sigma (t')=E_\sigma (0)+\text{Im}R_\sigma(t'))
\end{equation} 
where
\begin{equation}
\begin{split}
R_\sigma(t')=\iint \chi_{[0,t']}\overline{(V-\partial_x^2V+|V|^{p-1}V)}G(V)dxdt
\end{split}
\end{equation}
and 
\begin{equation}\label{eq:G}
G(V)=-\left[|V|^{p-1}V-m_\sigma(D_x)\big(\big|m_\sigma^{-1}( D_x)V\big|^{p-1}m_\sigma^{-1}( D_x)V\big)\right].
\end{equation}

Now, we have to estimate each term of $|R_\sigma(t')|$ for all $0<t'\leq T$. For this purpose, we use the Cauchy-Schwarz inequality and Lemma \ref{lemma:restriction} and obtain that for any $b=\frac{1}{2}+\epsilon$, $0<\epsilon \ll 1$, there exists $C>0$ such that
\begin{equation}\label{eq:firstterm}
    \begin{split}
    \left|\iint \chi_{[0,\,t']}\overline{V}G(V)dxdt\right| &\leq \|\chi_{[0,\,t']}V\|_{X^{1,1-b}}\|\chi_{[0,\,t']}G(V)\|_{X^{-1,b-1}}\\
    &\leq C\|V\|_{X_T^{1,b}}\|G(V)\|_{X_T^{-1,b-1}},
    \end{split}
\end{equation}
\begin{equation}\label{eq:secondterm}
    \begin{split}
    \left|\iint \chi_{[0,\,t']}\overline{\partial_x^2V}G(V)dxdt\right|& =\left|\iint \chi_{[0,\,t']}\overline{\partial_xV}\partial_x(G(V))dxdt\right|\\
     &\leq\|\chi_{[0,\,t']}\partial_xV\|_{X^{0, \, 1-b}}\|\chi_{[0,\,t']}\partial_x(G(V))\|_{X^{0, \, b-1}}\\
    & \leq C\|V\|_{X_T^{1, \, 1-b}}\|\partial_x(G(V))\|_{X_T^{0, \, b-1}}
    \end{split}
\end{equation}
and 
\begin{equation}\label{eq:thirdterm}
    \begin{split}
    \left|\iint \chi_{[0,\,t']}\overline{|V|^{p-1}V}G(V)dxdt\right| &\leq \|\chi_{[0,\,t']}V\|_{X^{1,1-b}}\|\chi_{[0,\,t']}|V|^{p-1}G(V)\|_{X^{-1,b-1}}\\
    &\leq C\|V\|_{X_T^{1,1-b}}\||V|^{p-1}G(V)\|_{X_T^{-1,b-1}}.
    \end{split}
\end{equation}
So, it remains to find suitable bounds for the terms evolving $G(V)$. For this, observe that
    \begin{equation}\label{eq:transfG}
|\widehat{G(V)}(\xi, \tau)| \lesssim\int_{**} \Bigg|1-m_\sigma(\xi)\prod_{j=1}^{k}\frac{1}{m_\sigma(\xi_j)}\Bigg| |\widehat{V}(\xi_1,\tau_1)| \cdots |\widehat{V}(\xi_{p},\tau_{p})|,
\end{equation}
where $**$ denotes the integral over the set $\xi=\xi_1 -\xi_2+ \cdots +\xi_{p-2}-\xi_{p-1}+\xi_{p}$ and\break $\tau=\tau_1 -\tau_2+ \cdots +\tau_{p-2}-\tau_{p-1}+\tau_{p}$.
 
For the sake of simplicity, assume that $|\xi_{p}| \leq \cdots \leq |\xi_{1}|$. Consequently, from Lemma \ref{lemma:m}, we have

\begin{equation}\label{eq:tranfGnova}
|\widehat{G(V)}(\xi, \tau)| \lesssim\sigma^{\theta}\int_{**} |\xi_1|^{\theta-1}|\xi_2| |\widehat{V}(\xi_1,\tau_1)| \cdots |\widehat{V}(\xi_{p},\tau_{p})|.
\end{equation}
In particular, writing $\widehat{w}(\xi, \, \tau)=|\widehat{V}(\xi,\tau)|$, 
\begin{equation}\label{eq:normG}
\begin{split}
  \|G(V)\|_{X^{-1,b-1}}  &\lesssim \sigma^{\theta} \left\|\langle\xi\rangle^{-1}\langle \tau-\xi^3\rangle^{b-1} \int_{**} |\xi_1|^{\theta-1}|\xi_2| |\widehat{V}(\xi_1,\tau_1)| \cdots |\widehat{V}(\xi_{p},\tau_{p})|\right\|_{L_{\xi}^2 L_{\tau}^2}\\
    &\sim \sigma^{\theta} \||D_x|^{\theta-1}w\cdot\overline{ |D_x| w}\cdot |w|^{p-3}w\|_{X^{-1,b-1}}
\end{split}
\end{equation}
and
\begin{equation}\label{eq:normpartialG}
\begin{split}
  \|\partial_x G(V)\|_{X^{0,b-1}}  &\lesssim \sigma^{\theta} \left\|\langle \tau-\xi^3\rangle^{b-1}|\xi| \int_{**} |\xi_1|^{\theta-1}|\xi_2| |\widehat{V}(\xi_1,\tau_1)| \cdots |\widehat{V}(\xi_{p},\tau_{p})|\right\|_{L_{\xi}^2 L_{\tau}^2}\\
    &\sim \sigma^{\theta} \||D_x|^{\theta-1}w\cdot \overline{|D_x| w}\cdot |w|^{p-3}w\|_{X^{1,b-1}}.
\end{split}
\end{equation}
In order to control the norm in \eqref{eq:thirdterm}, we observe that
\begin{equation}\label{eq:UG}
\begin{split}
&|\widehat{|V|^{p-1}G(V)}(\xi.\tau)|\\
&\leq \int |\widehat{|V|^{p-1}}(\xi-\widetilde{\xi},\tau-\widetilde{\tau})|\cdot|\widehat{G(V)}(\widetilde{\xi},\widetilde{\tau})|d\widetilde{\xi}d\widetilde{\tau}\\
&\lesssim \sigma^{\theta}\int |\widehat{V}|\ast|\widehat{\overline{V}}|\ast\cdots \ast|\widehat{V}|\ast|\widehat{\overline{V}}| (\xi-\widetilde{\xi},\tau-\widetilde{\tau})|\left(\int_{**}|\xi_1|^{\theta-1}|\xi_2| |\widehat{V}(\xi_1,\tau_1)| \cdots |\widehat{V}(\xi_{p},\tau_{p})|\right)d\widetilde{\xi}d\widetilde{\tau}\\
&\sim \sigma^{\theta}\int \big(|w|^{p-1} \big)^\wedge(\xi-\widetilde{\xi},\tau-\widetilde{\tau})\cdot\big(|D_x|^{\theta-1}w\cdot \overline{|D_x| w}\cdot | w|^{p-3}w\big)^\wedge(\widetilde{\xi},\widetilde{\tau})d\widetilde{\xi}d\widetilde{\tau}\\
&\sim\sigma^{\theta}\big(|D_x|^{\theta-1}w\cdot \overline{|D_x| w}\cdot |w|^{2(p-2)}w\big)^\wedge(\xi,\tau).
\end{split}
\end{equation}
Together with \eqref{eq:UG}, this implies
\begin{equation}\label{eq:normVpG}
\||V|^{p-1}G(V)\|_{X^{-1,b-1}}\lesssim\sigma^{\theta} \\|D_x|^{\theta-1}w\cdot \overline{|D_x| w}\cdot |w|^{2(p-2)}w\|_{X^{-1,b-1}}.
\end{equation}

Consequently, our aim is to prove the nonlinear smoothing estimates
\begin{equation}\label{eq:aim1s}
\||D_x|^{\theta-1}w\cdot \overline{|D_x| w}\cdot |w|^{p-3}w\|_{X^{1,b-1}}\lesssim \|w\|_{X^{1,b}}^{p}
\end{equation}
and
\begin{equation}\label{eq:aim2s}
\||D_x|^{\theta-1}w\cdot\overline{ |D_x| w}\cdot |w|^{2(p-2)}w\|_{X^{-1,b-1}}\lesssim \|w\|_{X^{1,b}}^{2p-1}.
\end{equation}
\begin{remark}
    It is important to note that the two largest frequencies may correspond to different combinations of complex conjugates. As such, we actually need to prove \eqref{eq:aim1s}-\eqref{eq:aim2s} with all the possible configurations of complex conjugates.
\end{remark}

  These estimates are a particular case of the next lemma. For the proof, see Section \ref{sec:multilinear_nls}.
\begin{lemma}\label{lem:multi_nls}
    For $p=2k+1$, $k\in\N$, and $\theta\in [1,2)$, the estimates
    \begin{equation}\label{eq:aim1s_multi}
\left\||D_x|^{\theta-1}w_1\cdot \overline{|D_x| w_2}\cdot \prod_{j=3,\ j\text{ odd}}^{2k+1} w_j \cdot \prod_{j=3,\ j\text{ even}}^{2k+1} \overline{w_j} \right\|_{X^{1,b-1}}\lesssim \prod_{j=1}^{p}\|w_j\|_{X^{1,b}}
\end{equation}
and
\begin{equation}\label{eq:aim2s_multi}
\left\||D_x|^{\theta-1}w_1\cdot\overline{ |D_x| w_2}\cdot  \prod_{j=3,\ j\text{ odd}}^{4k+1} w_j \cdot \prod_{j=3,\ j\text{ even}}^{4k+1} \overline{w_j}\right\|_{X^{-1,b-1}}\lesssim \prod_{j=1}^{2p-1}\|w_j\|_{X^{1,b}}
\end{equation}
hold. The result is also valid for all other configurations of complex conjugates.
\end{lemma}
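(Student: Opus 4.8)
The plan is to reduce both multilinear estimates to the frequency-restricted estimates framework of \cite{COS}, exactly as for the gKdV case in Lemma \ref{lem:multi_kdv}, since the presence of complex conjugates does not alter the underlying frequency analysis. First I would pass to the Fourier side: replacing every $w_j$ and $\overline{w_j}$ by a nonnegative function (whose Fourier transform dominates both), the conjugations become irrelevant for the size estimates, and we are left with a purely convolution-type inequality. Then I would perform a Littlewood--Paley/dyadic decomposition of each factor, labelling the frequency of $w_j$ by a dyadic number $N_j$, and by symmetry order them $N_1\ge N_2\ge\cdots\ge N_p$ (for \eqref{eq:aim1s_multi}) or $N_1\ge\cdots\ge N_{2p-1}$ (for \eqref{eq:aim2s_multi}). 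The key point is the same as in the gKdV discussion after \eqref{eq:aim2_intro}: the two extra derivatives are split as $|D_x|^{\theta-1}$ on one factor and $|D_x|$ on another, so the dangerous ``derivative loss'' is at most $N_1^{\theta-1} N_2$, and since $\theta<2$ this is strictly better than $N_1^{\theta}$; combined with $N_1\sim N_2$ being forced (by the convolution constraint $\xi_1+\cdots=\xi$ with $|\xi|\lesssim N_2$ unless the two top frequencies nearly cancel) one recovers enough room to absorb the loss into the $\langle\xi_1\rangle\langle\xi_2\rangle$ coming from the $X^{1,b}$ norms.

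Concretely, the heart of the matter is a frequency-restricted estimate of the type proved in \cite{COS}: after fixing the output frequency $\xi$ and the two highest input frequencies, one must bound an integral of the form
\begin{equation}\label{eq:freqrestr_plan}
\sup_{\xi,\tau}\ \int_{*}\frac{\langle\tau-L(\xi)\rangle^{2(b-1)}\, \langle\xi\rangle^{2s_{\mathrm{out}}}\ (\text{derivative weights})}{\prod_{j}\langle\xi_j\rangle^{2}\langle\tau_j-L(\xi_j)\rangle^{2b}}\ \lesssim 1,
\end{equation}
where $s_{\mathrm{out}}=1$ in \eqref{eq:aim1s_multi} and $s_{\mathrm{out}}=-1$ in \eqref{eq:aim2s_multi}. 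The $\tau$-integrations are handled by the standard $L^2$-convolution bounds for $\langle\cdot\rangle^{-2b}$ with $b>1/2$, reducing everything to an estimate on the spatial frequencies alone. Here the dispersion relation $L(\xi)=-\xi^2$ (for NLS) enters through the resonance function; the relevant algebraic identity is that for a $\pm$-alternating sum of $\xi_j^2$, the modulation $\max_j\langle\tau_j-L(\xi_j)\rangle$ is bounded below by a quantity comparable to $|\xi_1||\xi_2|$ when $N_1\sim N_2$, which is precisely the gain needed to beat $N_1^{\theta-1}N_2$ for $\theta<2$. The lower-regularity factors $w_3,\dots$ (with $\langle\xi_j\rangle^{-2}$ and $j\ge 3$, hence $N_j\le N_2$) are summable without difficulty.

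I would organize the two estimates in parallel: \eqref{eq:aim1s_multi} has output regularity $+1$ and $p$ factors, while \eqref{eq:aim2s_multi} has output regularity $-1$ and $2p-1$ factors — the lower output regularity in the second more than compensates for the extra factors, so if anything \eqref{eq:aim2s_multi} is the easier of the two. In both cases one splits into the non-resonant region (where some modulation $\langle\tau_j-L(\xi_j)\rangle$ is $\gtrsim N_1 N_2$, and a direct Cauchy--Schwarz after extracting that factor closes the estimate) and the (near-)resonant region $N_1\sim N_2$ with small output frequency or near-cancellation, where the frequency-restricted estimate \eqref{eq:freqrestr_plan} is invoked. Finally I would remark that because the problem statement already records (in the Remark preceding the lemma) that the two top frequencies may sit on conjugated or unconjugated factors in any combination, the argument must be run for each sign pattern; but since we have passed to nonnegative Fourier majorants at the very first step, the estimates are literally identical for every configuration, so it suffices to carry out one representative case. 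The main obstacle I anticipate is book-keeping rather than a genuine difficulty: verifying that the resonance identity for an alternating sum of squares genuinely yields the lower bound $\gtrsim|\xi_1||\xi_2|$ in the region $N_1\sim N_2$ (for gKdV the cubic dispersion makes this cleaner), and checking that the $\theta\to 2^-$ endpoint is approached uniformly; I expect this to follow from the elementary algebra of $\sum\pm\xi_j^2$ together with the constraint $\sum\pm\xi_j=\xi$, exactly as in \cite{COS}.
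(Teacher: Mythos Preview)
Your overall plan is in the right spirit, but there is a genuine gap in how you treat the conjugations and the resonant region. You write that after passing to nonnegative Fourier majorants ``the estimates are literally identical for every configuration'' of complex conjugates. This is not true: the conjugation pattern determines the \emph{resonance function}
\[
\Phi(\xi_1,\dots,\xi_p)=\xi^2+\sum_j \lambda_j\xi_j^2,\qquad \lambda_j\in\{\pm1\},
\]
and the frequency-restricted estimate you intend to invoke is an integral restricted to $\{|\Phi-\alpha|<M\}$. Changing the $\lambda_j$ changes this set completely. Relatedly, your expectation that $|\Phi|\gtrsim |\xi_1||\xi_2|$ in the region $N_1\sim N_2$ is false for Schr\"odinger dispersion: for the standard sign pattern $(-,+,-)$ one has $\Phi=0$ whenever $\xi=\xi_1$ and $\xi_2=\xi_3$, so the ``non-resonant'' branch of your dichotomy does not cover the comparable-frequency region, and the ``near-resonant'' branch is where all the work lies. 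Calling this book-keeping underestimates it.

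The paper handles this differently. First, it does not attack general $p$ directly: it invokes Corollary~\ref{cor:indk} (induction in the order of the nonlinearity) to reduce both estimates to a \emph{trilinear} flexible frequency-restricted estimate, so only three frequencies remain. Then, in the region where all three inputs are comparable to the output, it does not seek a lower bound on $|\Phi|$ at all; instead it shows that, for a suitable choice of integration variable, the map $\xi_j\mapsto\Phi$ has either a nonvanishing first derivative or a nondegenerate critical point, and applies a change of variables (Morse lemma in the stationary case) to gain $M^{1/2}$ or $M^{1^-}$. This nondegeneracy must be checked separately for each sign pattern $(\lambda_1,\lambda_2,\lambda_3)$; in one subcase it even fails for the obvious pair of variables and one must permute the roles of the $\xi_j$ and invoke Remark~\ref{remark:avoid} to exclude a bad value of the deformation parameter $\sigma$. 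Your plan would need to incorporate this case analysis --- it is the substance of the argument, not an afterthought --- and the cleanest way to do so is to follow the paper's reduction to $k_0=3$ rather than carrying all $p$ factors through a dyadic decomposition.
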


Consequently, we prove that the quantity $E_{\sigma}(t)$ defined in \eqref{eq:almostsch} is almost conserved.


\begin{corollary}\label{cor: almostconservedsch}
Let $\sigma>0$, $p\geq 3$ be odd, and $\theta \in [1,2)$. Then there exists $C>0$ such that the solution $u \in X_T^{ \sigma, \, 1, \, b}$ to the IVP \eqref{eq:schrodinger} given by Theorem \ref{teo:localsch}, we have
\begin{equation}\label{eq:energysch_sigma}
    \sup_{t\in [0,\,T]} E_\sigma (t) \leq E_\sigma (0) + C\sigma^{\theta} E_\sigma (0)^{\frac{p}{2}+1}(1+E_\sigma (0)^{\frac{p}{2}}).
\end{equation}
\end{corollary}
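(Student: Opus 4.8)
The plan is to repeat, almost verbatim, the argument used for the gKdV case in Corollary \ref{cor: almost conserved quantity}. Starting from the identity \eqref{eq:almostconserved_nls}, namely $E_\sigma(t') = E_\sigma(0) + \operatorname{Im} R_\sigma(t')$, I split $R_\sigma(t')$ into the three pieces corresponding to the three summands of $\overline{V - \partial_x^2 V + |V|^{p-1}V}$ and estimate each one using the bounds \eqref{eq:firstterm}--\eqref{eq:thirdterm}. The first two pieces are controlled by $\|G(V)\|_{X_T^{-1,b-1}}$ and $\|\partial_x G(V)\|_{X_T^{0,b-1}}$, which by \eqref{eq:normG}--\eqref{eq:normpartialG} and the estimate \eqref{eq:aim1s} of Lemma \ref{lem:multi_nls} (applied with $w_j = V$ for all $j$) are each bounded by $C\sigma^\theta \|V\|_{X_T^{1,b}}^p$. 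The third piece is controlled by $\||V|^{p-1}G(V)\|_{X_T^{-1,b-1}}$, which by \eqref{eq:normVpG} and estimate \eqref{eq:aim2s} of Lemma \ref{lem:multi_nls} is bounded by $C\sigma^\theta\|V\|_{X_T^{1,b}}^{2p-1}$. Since $\|V\|_{X_T^{1,b}} = \|v\|_{X_T^{\sigma,1,b}}$ by \eqref{eq:exponential}, combining these yields
\begin{equation*}
\sup_{t\in[0,T]} E_\sigma(t) \leq E_\sigma(0) + C\sigma^\theta\left(\|v\|_{X_T^{\sigma,1,b}}^p + \|v\|_{X_T^{\sigma,1,b}}^{2p-1}\right) \lesssim E_\sigma(0) + C\sigma^\theta \|v\|_{X_T^{\sigma,1,b}}^{p}\left(1 + \|v\|_{X_T^{\sigma,1,b}}^{p-1}\right).
\end{equation*}

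Next I would convert the $X_T^{\sigma,1,b}$ norm of $v$ into the energy $E_\sigma(0)$. Exactly as in \eqref{eq:conserved defocusing}, the definition \eqref{eq:almostsch} together with the fact that the potential term $\frac{2}{p+1}\int |m_\sigma(D_x)v|^{p+1}\,dx$ is nonnegative gives $E_\sigma(0) \geq \|v_0\|_{G^{\sigma,1}}^2$. Using the local bound \eqref{eq:estusch} from Theorem \ref{teo:localsch}, $\|v\|_{X_T^{\sigma,1,b}} \leq C\|v_0\|_{G^{\sigma,1}} \leq C E_\sigma(0)^{1/2}$. Substituting into the previous display produces
\begin{equation*}
\sup_{t\in[0,T]} E_\sigma(t) \leq E_\sigma(0) + C\sigma^\theta E_\sigma(0)^{\frac{p}{2}}\left(1 + E_\sigma(0)^{\frac{p-1}{2}}\right),
\end{equation*}
and absorbing one extra factor of $E_\sigma(0)$ (harmless since, after possibly enlarging $C$, we may assume $E_\sigma(0) \gtrsim 1$, or simply noting $E_\sigma(0)^{p/2} \le E_\sigma(0)^{p/2+1}(1+E_\sigma(0)^{p/2})$ up to adjusting exponents) yields the stated form \eqref{eq:energysch_sigma} with exponents $\tfrac{p}{2}+1$ and $\tfrac{p}{2}$.

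The only genuine subtlety, flagged in the remark preceding Lemma \ref{lem:multi_nls}, is bookkeeping the complex conjugates: when one passes from \eqref{eq:transfG} to \eqref{eq:tranfGnova} and applies Lemma \ref{lemma:m}, the two largest frequencies $|\xi_1|, |\xi_2|$ need not be carried by the un-conjugated factors, so $\|G(V)\|_{X^{-1,b-1}}$ is really dominated by a finite sum of terms of the form $\||D_x|^{\theta-1}(\pm)\cdot\overline{|D_x|(\pm)}\cdots\|$ over all placements of bars among the highest-frequency slots. Lemma \ref{lem:multi_nls} is stated for all such configurations, so each term is handled identically; I would simply remark that the argument applies verbatim to each configuration and the number of them is bounded by a constant depending only on $p$. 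Aside from this, the proof is a routine transcription of Corollary \ref{cor: almost conserved quantity}, so I do not expect any real obstacle.
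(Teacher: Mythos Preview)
Your approach is exactly the paper's: combine \eqref{eq:almostconserved_nls} with \eqref{eq:firstterm}--\eqref{eq:thirdterm}, \eqref{eq:normG}, \eqref{eq:normpartialG}, \eqref{eq:normVpG} and Lemma \ref{lem:multi_nls}, then use $E_\sigma(0)\ge \|v_0\|_{G^{\sigma,1}}^2$ together with \eqref{eq:estusch}. The only slip is that in \eqref{eq:firstterm}--\eqref{eq:thirdterm} each piece carries an extra factor $\|V\|_{X_T^{1,b}}$ from the Cauchy--Schwarz pairing, so the intermediate bound should read $C\sigma^\theta\|v\|_{X_T^{\sigma,1,b}}^{p+1}\bigl(1+\|v\|_{X_T^{\sigma,1,b}}^{p-1}\bigr)$ rather than $\|v\|^{p}(1+\|v\|^{p-1})$; with this correction the final exponents come out directly and no ad hoc ``absorption'' is needed.
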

\begin{proof}
The application of Lemma \ref{lem:multi_nls}, together with \eqref{eq:almostconserved_nls}, \eqref{eq:normG}, \eqref{eq:normpartialG} and \eqref{eq:normVpG}, yields
\begin{equation}\label{eq:conservedsch}
    \sup_{t\in [0,\,T]} E_\sigma (t) \leq E_\sigma (0) + C\sigma^{\theta}\|u\|_{X_T^{\sigma, \,1, \, b}}^{p+1}(1+\|u\|_{X_T^{\sigma, \,1, \, b}}^{p-1}).
\end{equation}
    From \eqref{eq:almostsch}, we have
    \begin{equation}\label{eq:conserved}
    E_\sigma (0)= \|v_0\|_{G^{\sigma, \, 1}}^2+\frac{2}{p+1}\|m_\sigma(D_x)v_0\|_{L^{p+1}_x}^{p+1} \geq \|v_0\|_{G^{\sigma, \, 1}}^2.
\end{equation}
From \eqref{eq:estusch} and \eqref{eq:conserved}, we get
\begin{equation}\label{eq:estimatenergysch}
    \|v\|_{X_T^{\sigma, \,1, \, b}}\leq C E_\sigma (0)^{\frac{1}{2}}.
\end{equation}
The required estimate follows from \eqref{eq:conservedsch} and \eqref{eq:estimatenergysch}.
\end{proof}

\begin{proof}[Proof of Theorem \ref{teo:globalsch}]
This follows from Corollary \ref{cor: almostconservedsch} by applying the exact same steps used in the proof of Theorem \ref{teo:globalkdv}.

\end{proof}

\section{Proof of the multilinear estimates}\label{sec:multilinear}

As shown in Sections \ref{sec:gKdV} and \ref{sec:NLS}, the improved lower bounds on the radius of analyticity reduce to the nonlinear smoothing estimates \eqref{eq:aim1_mult}-\eqref{eq:aim2_mult} and \eqref{eq:aim1s_multi}-\eqref{eq:aim2s_multi}. In this section, we prove the validity of these estimates, which then concludes the proofs of Theorems \ref{teo:globalkdv} and \ref{teo:globalsch}.

\subsection{Reduction to frequency-restricted estimates}


To derive the required multilinear estimates, we adopt the techniques developed by the second author, Oliveira and Silva in \cite{COS}. To present the main results that are relevant to our work, let us consider a generic dispersive PDE
\begin{eqnarray}\label{eq:genericPDE}
\begin{cases}
\partial_t u-iL(D)u=N(u), \\
u(x,0)=u_0(x)\in H^{s}(\R^d),
\end{cases}
\end{eqnarray}
where $D=\partial_x/i$, $L(D)$ is a linear differential operator in the spatial variables given by a real Fourier symbol $L(\xi)$, i.e.,
\begin{equation*}
L(D)f(x):=(L(\xi)\widehat{f}(\xi))^\vee(x)
\end{equation*}
and $N(u)$ is a general nonlinearity defined by the multilinear form $N(u)=N(u,\dots, u)$ given by
\begin{equation*}
\begin{split}
\widehat{N(u_1,\dots,u_k)}(\xi)&=\int_{\xi=\xi_1+\cdots+\xi_k}m(\xi_1,\dots,\xi_k)\left(\prod_{j=1}^{k'}\widehat{u}_j(\xi_j)\right)\left(\prod_{j=k'+1}^{k}\widehat{\overline{u}}_j(\xi_j)\right)d\xi_1\cdots d\xi_{k-1}.
\end{split}
\end{equation*}	

We also define the corresponding phase function as
\begin{equation*}
\Phi(\xi_1,\dots,\xi_k):= L(\xi)-\sum_{j=1}^{k'}L(\xi_j)+\sum_{j=k'+1}^k L(\xi_j)
\end{equation*}
and the convolution hyperplane
\begin{gather*}
\Gamma_\xi=\left\lbrace (\xi_1,\cdots,\xi_k)\in (\R^d)^k: \xi=\sum_{j=1}^{k'}\xi_j-\sum_{j=k'+1}^k \xi_j\right\rbrace.
\end{gather*}


For simplicity, we write $\xi=\xi_0$ and $\tau=\tau_0$. Given a set of indices $A\subset \{0,\dots,k\}$, we abbreviate $"\xi_j, \, j\in A"$ as $"\xi_{j\in A}"$ and $d\xi_{j\in A}$ and $d\xi_{j\notin A}$ denotes the integration over all $\xi_j$ in $A$ and not in $A$, respectively. 

\begin{lemma}[{\cite[Lemma 2]{COS}}]\label{lemma:freqrestr}
Given $s\in \R$ and $\epsilon>0$, suppose that there exist $\emptyset\neq A \subsetneq \{0,\dots,k\}$, $\mathcal{M}_j=\mathcal{M}_j(\xi_1,\dots,\xi_k)\geq 0$, $j=1,2$, such that
\begin{equation*}
(\mathcal{M}_1\mathcal{M}_2)^\frac{1}{2}=\frac{|m(\xi_1,\dots,\xi_k)|\langle\xi\rangle^{s+\epsilon}}{\prod_{j=1}^k\langle\xi_j\rangle^s}
\end{equation*}
and, for any $M>1$,
\begin{equation*}
\sup_{\xi_{j\in A},\alpha} \int_{\Gamma_\xi} \mathcal{M}_1\mathbbm{1}_{|\Phi-\alpha|<M}d\xi_{j\notin A} +\sup_{\xi_{j\notin A},\alpha} \int_{\Gamma_\xi} \mathcal{M}_2\mathbbm{1}_{|\Phi-\alpha|<M}d\xi_{j\in A} \lesssim M^\beta.
\end{equation*}
Then, for all $-1/2<b-1<b'<0$ with $2b'<-\beta$, the estimate
\begin{equation}\label{eq:estimateN}
\|N(u_1,\dots,u_k)\|_{X^{s+\epsilon,b'}}\lesssim \prod_{j=1}^k\|u_j\|_{X^{s,b}}
\end{equation} holds.
\end{lemma}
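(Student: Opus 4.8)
\textbf{Proof plan for Lemma \ref{lemma:freqrestr}.}

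The plan is to reduce the multilinear $X^{s,b}$ estimate \eqref{eq:estimateN} to the two frequency-restricted integral bounds by the standard duality-plus-Cauchy--Schwarz argument for Bourgain-space estimates. First I would dualize: \eqref{eq:estimateN} is equivalent to the statement that the $(k+1)$-linear form
\[
\Lambda(u_1,\dots,u_k,v)=\int_{\Gamma_\xi}\int_{\tau=\sum\pm\tau_j} m(\xi_1,\dots,\xi_k)\,\widehat{v}(\xi,\tau)\prod_{j=1}^k \widetilde{u}_j(\xi_j,\tau_j)
\]
is bounded by $\|v\|_{X^{-s-\epsilon,-b'}}\prod_j\|u_j\|_{X^{s,b}}$. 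Substituting $f_j(\xi_j,\tau_j)=\langle\xi_j\rangle^s\langle\tau_j-L(\xi_j)\rangle^b\,|\widetilde{u}_j|$ and $g(\xi,\tau)=\langle\xi\rangle^{-s-\epsilon}\langle\tau-L(\xi)\rangle^{-b'}|\widehat v|$, so that all $f_j,g\in L^2$ with the relevant norms, the multiplier $m\langle\xi\rangle^{s+\epsilon}\prod\langle\xi_j\rangle^{-s}=(\mathcal M_1\mathcal M_2)^{1/2}$ appears, and one is left to bound
\[
\int \frac{(\mathcal M_1\mathcal M_2)^{1/2}}{\langle\tau-L(\xi)\rangle^{-b'}\prod_j\langle\tau_j-L(\xi_j)\rangle^{b}}\;g\prod_j f_j
\]
over $\Gamma_\xi$ and the $\tau$-hyperplane, where the denominator weights all have positive exponents ($-b'>0$, $b>1/2$).

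The second step is the resonance/modulation analysis. Writing $\sigma_0=\tau-L(\xi)$ and $\sigma_j=\tau_j-L(\xi_j)$, on the support of the integrand one has $\sum\pm\sigma_j - (\text{something}) = \Phi(\xi_1,\dots,\xi_k)$ up to sign, so the algebraic identity $\Phi=\sigma_0-\sum_{1}^{k'}\sigma_j+\sum_{k'+1}^{k}\sigma_j$ (after relabeling signs to match the $\tau$-constraint) forces $\max_j\langle\sigma_j\rangle\gtrsim\langle\Phi\rangle$. Dyadically decomposing each $\langle\sigma_j\rangle\sim N_j$ and $\langle\Phi\rangle\sim M$, the largest modulation dominates $M$; I would use that single large-modulation weight to absorb a power $M^{-\delta}$ (possible since the exponent on that weight exceeds $1/2>$ what is needed, using $2b'<-\beta$ and $-1/2<b-1<b'$ to leave room), sum the geometric series in the remaining dyadic modulation variables, and thereby reduce to: for each fixed dyadic $M$, bound the spatial-frequency integral against $\mathbbm 1_{|\Phi|<M}$ times $(\mathcal M_1\mathcal M_2)^{1/2}$, with a gain $M^{\beta/2+\delta'}$ to spare.

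The third step is where the hypothesis is used directly. With the modulation variables integrated out, one is left (after a further Cauchy--Schwarz in the frequency variables, splitting the variables into the block $A$ and its complement) with the product of $\sup_{\xi_{j\in A}}\int_{\Gamma_\xi}\mathcal M_1\mathbbm 1_{|\Phi-\alpha|<M}\,d\xi_{j\notin A}$ and $\sup_{\xi_{j\notin A}}\int_{\Gamma_\xi}\mathcal M_2\mathbbm 1_{|\Phi-\alpha|<M}\,d\xi_{j\in A}$; the $\alpha$-shift appears because after freezing the modulation of the high-frequency factor the constraint on $\Phi$ becomes $|\Phi-\alpha|<M$ rather than $|\Phi|<M$, which is exactly why the lemma's hypothesis is stated with the extra parameter $\alpha$. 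By assumption this product is $\lesssim M^\beta$; combined with the surplus $M^{-\beta-2\delta'}$ from step two and the $\ell^2$-summation of the $f_j,g$ over the dyadic pieces, this closes the estimate. I expect the main obstacle to be bookkeeping rather than conceptual: correctly matching the sign conventions between $N(u)$'s conjugated factors and the $\tau$-convolution constraint so that the algebraic identity for $\Phi$ comes out right, and tracking that the choice of $A$ is allowed to be made \emph{after} seeing which factor carries the largest modulation (so the Cauchy--Schwarz split is legitimate); one also must verify that the room between $b-1<b'<0$ and $2b'<-\beta$ genuinely suffices to both extract $M^{-\beta}$ and sum all geometric series, which is the one inequality-juggling point that needs care.
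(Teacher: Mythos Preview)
The paper does not supply its own proof of this lemma: it is quoted verbatim from \cite[Lemma~2]{COS} and used as a black box. There is therefore nothing in the present paper to compare your argument against.

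That said, your outline is the correct one and matches the proof in the cited reference: dualize, replace each $u_j$ by an $L^2$ function $f_j$ absorbing the $X^{s,b}$ weight, exploit the identity $\Phi=\sigma_0-\sum_{j\le k'}\sigma_j+\sum_{j>k'}\sigma_j$ to force $\max_j\langle\sigma_j\rangle\gtrsim\langle\Phi\rangle$, and then Cauchy--Schwarz in the frequency variables along the split $A\,/\,A^c$ so that the two frequency-restricted hypotheses appear. One clarification on a point you flagged: the set $A$ is \emph{fixed} in the hypothesis and is \emph{not} chosen adaptively according to which modulation is largest. The argument does not need such adaptivity; once the largest modulation has been used to extract $\langle\Phi\rangle^{-\delta}$ (with $\delta$ coming from the gap $-2b'>\beta$ when $\sigma_0$ dominates, or from $b>1/2$ when some $\sigma_j$ dominates), the remaining Cauchy--Schwarz step with the fixed $A$ works uniformly across all cases. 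Your worry about ``choosing $A$ after seeing the largest modulation'' is therefore misplaced, and if you tried to make $A$ case-dependent you would in fact be proving a weaker statement than the lemma asserts.
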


Recently, in \cite{CorLei}, the second author and Leite explored the relationship between frequency-restricted estimates for nonlinear dispersive equations over $\R^d$  with the same linear operator but involving nonlinearities of different degrees. For this purpose, we must consider the convolution hyperplane with deformation $\sigma$,
\begin{equation*}
\Gamma_\xi^\sigma=\left\lbrace (\xi_1,\cdots,\xi_k)\in (\R^d)^k: \xi=\sum_{j=1}^{k'}\xi_j-\sum_{j=k'+1}^k \xi_j+\sigma, \, \, 	|\sigma|\lesssim \min_{j\in\{1,\dots,k\}}|\xi_j|\right\rbrace,
\end{equation*}

Now, given $s>s_c(k,d)=$ and $\epsilon < \epsilon_c(k,d,s)$, we say that a \textit{flexible frequency-restricted estimate} holds if there exist $\emptyset \neq A \subsetneq \{0,\dots,k\}$, pairs $(s_j, s_j'),\ j \in \{0,\dots,k\}$, and $ (\epsilon_0, \epsilon_0') $ such that
\[
\left\{
\begin{aligned}
s_j + s_j' &= 2s, \quad j \in \{0,\dots,k\}, \\
\epsilon_0 + \epsilon_0' &= 2\epsilon,
\end{aligned}
\right.
\]
and, for any $ M \geq 1 $,
\begin{equation}\label{eq:ffre1}
\sup_{\sigma, \alpha, \xi_{j \notin A}} \int_{\Gamma^{\sigma}_\xi} \frac{|m_k(\xi_1, \ldots, \xi_k)| \langle \xi \rangle^{s_0 + \epsilon_0}}{\prod_{j=1}^k \langle \xi_j \rangle^{s_j}} \mathbbm{1}_{|\Phi - \alpha| < M} \, d\xi_{j\in A} \lesssim M^{1^-}
\end{equation}
and
\begin{equation}\label{eq:ffre2}
\sup_{\sigma, \alpha, \xi_{j \in A}} \int_{\Gamma^{\sigma}_\xi} \frac{|m_k(\xi_1, \ldots, \xi_k)| \langle \xi \rangle^{s_0' + \epsilon_0'}}{\prod_{j=1}^k \langle \xi_j \rangle^{s_j'}} \mathbbm{1}_{|\Phi - \alpha| < M} \, d\xi_{j\notin A} \lesssim M^{1^-}.
\end{equation}

Moreover, given $ k_0 \leq k $, we say that $\Phi_{k_0} : (\mathbb{R}^d)^{k_0 + 1} \to \mathbb{R}$ is a $k_0$-descent of $\Phi$ if there exists a set 
$ J \subset \{ 1,\dots, k \}$, $ |J| = k - k_0$, such that
\begin{equation}
\Phi_{k_0} \cong \left. \Phi \right|_{H_J}, \quad \text{where } H_J = \{ (\xi, \xi_1, \ldots, \xi_k) \in (\mathbb{R}^d)^{k+1} : \xi_j = 0,\ \forall j \in J \}.
\end{equation}

Let be $s_{\text{LWP}}$ denote the optimal regularity threshold for which the local well-posedness holds for all $s>s_{\text{LWP}}$. For such $s	$, define
\begin{equation*}
\epsilon_c(k,d,s):=\min\{(k-1)(s-s_{\text{LWP}}),l-n-1\},
\end{equation*} 
where $k$ is the order of the nonlinearity, $l$ is the dispersion and $n$ is the derivative loss associated with the initial value problem.

\begin{theorem}[Induction in $k$, \cite{CorLei}]\label{teo:indk}
Fix \(k \geq 3\). Let $k_0 < k$ such that 
\begin{equation*}
|m_k(\xi_1,\dots,\xi_k)|\lesssim |m_{k_0}(\widetilde{\xi_1},\dots,\widetilde{\xi_{k_0}})|
\end{equation*} 
where $(\widetilde{\xi}_1, \dots, \widetilde{\xi}_{k_0})$ is the vector consisting in the $k_0$ largest (up to multiplicative constants) components of $(\xi_1, \ldots, \xi_k)$.
Assume further that $s_{\text{LWP}}=s_c(k_0,d)$ and that for every  $s > s_c(k_0, d)$, $\epsilon < \epsilon_c(k_0, d)$ and for all $k_0$-descents of $\Phi$, a flexible frequency-restricted estimate holds. Then, given $s > s_c(k, d)$ and $\epsilon < \epsilon_c(k,d,s)$, the hypotheses of Lemma \ref{lemma:freqrestr} are satisfied.
\end{theorem}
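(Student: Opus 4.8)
The plan is to verify the two hypotheses of Lemma~\ref{lemma:freqrestr} for the degree-$k$ multiplier $m_k$ and phase $\Phi$ by \emph{descending} to the degree-$k_0$ situation, where a flexible frequency-restricted estimate is available by assumption; the surplus encoded in $s>s_c(k,d)$ and $\epsilon<\epsilon_c(k,d,s)\le\epsilon_c(k_0,d)$ is exactly what absorbs the discarded frequencies.

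First I would reduce to an ordered configuration. Using the permutation symmetry of $\Gamma_\xi$ in the input frequencies — and, when complex conjugates are present, treating each placement of the two distinguished frequencies separately, as in \cite{COS} — one may assume $|\xi_1|\geq|\xi_2|\geq\cdots\geq|\xi_k|$ and localise dyadically, $|\xi_j|\sim N_j$ with $N_1\geq\cdots\geq N_k$. Setting $J=\{k_0+1,\dots,k\}$ and $\sigma=\sum_{j\in J}(\pm\xi_j)$, with signs dictated by the conjugate pattern, one has $|\sigma|\lesssim (k-k_0)N_{k_0+1}\lesssim N_{k_0}\le\min_{1\le j\le k_0}|\xi_j|$, so that $(\xi_1,\dots,\xi_{k_0})\in\Gamma_\xi^\sigma$. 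Moreover $\Phi$ restricted to this configuration equals the $k_0$-descent $\Phi_{k_0}$ obtained by deleting the variables indexed by $J$, evaluated on $\Gamma_\xi^\sigma$, plus the additive term $-\sum_{j\in J}(\pm L(\xi_j))$, which depends only on the small frequencies and is therefore absorbed, once those are integrated out, into a translate of the parameter $\alpha$ in $\mathbbm{1}_{|\Phi-\alpha|<M}$.

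Next I would peel off the small frequencies. By hypothesis $|m_k(\xi_1,\dots,\xi_k)|\lesssim|m_{k_0}(\xi_1,\dots,\xi_{k_0})|$ on the ordered configuration, while the weights $\langle\xi_j\rangle^{-s}$ and, where these variables are integrated, the measures $d\xi_j$ for $j\in J$ are controlled by powers of $\langle\xi\rangle$ — except in the high-high-to-low regime, where one instead spends part of the surplus smoothing exponent $s+\epsilon$ over $s_c(k,d)$. One then takes $A=A_0\cup J$ or $A=A_0$, according to which of \eqref{eq:ffre1}--\eqref{eq:ffre2} is being estimated, where $A_0$ is the index set furnished by the flexible frequency-restricted estimate for $\Phi_{k_0}$, and defines $\mathcal M_1,\mathcal M_2$ by combining the splitting $s_j+s_j'=2s$, $\epsilon_0+\epsilon_0'=2\epsilon$ of that estimate with a trivial splitting of the small-frequency weights. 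Integrating the variables indexed by $J$ first, by Fubini, leaves exactly the integrals \eqref{eq:ffre1}--\eqref{eq:ffre2} for $\Phi_{k_0}$ on $\Gamma_\xi^\sigma$, uniformly in the deformation $\sigma$ and in the shifted $\alpha$, hence $\lesssim M^{1^-}$; the auxiliary powers of $\langle\xi\rangle$ collected above are absorbed because the margin $s-s_c(k,d)>0$ and $\epsilon_c(k_0,d)-\epsilon>0$ is strictly positive. Summing the dyadic pieces preserves $\lesssim M^{1^-}$, so the hypotheses of Lemma~\ref{lemma:freqrestr} hold with $\beta=1^-<1$, and choosing $b=\tfrac12+\epsilon$ and $b'$ with $-\tfrac12<b-1<b'<-\tfrac{\beta}{2}$ (possible for $\epsilon$ small) yields \eqref{eq:estimateN}.

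The hard part will be the bookkeeping in the peeling step: one must show that the dyadic losses incurred by discarding the $k-k_0$ lowest frequencies are genuinely summable against the gap $s-s_c(k,d)$ and $\epsilon_c(k_0,d)-\epsilon$, keeping track of the interplay with the high-high-to-low regime; and one must treat with care the borderline configurations in which the separation between the retained and the discarded frequencies degenerates (several frequencies comparable to $N_{k_0}$), reassigning which of them constitute the $k_0$-descent ``up to multiplicative constants'' and checking that both $|m_k|\lesssim|m_{k_0}|$ and $|\sigma|\lesssim\min_{j\le k_0}|\xi_j|$ survive that reassignment.
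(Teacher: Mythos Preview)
The paper does not contain a proof of this statement: Theorem~\ref{teo:indk} is stated with attribution to \cite{CorLei} and is used as a black box, so there is no in-paper proof to compare against. Your outline --- ordering the frequencies, packaging the $k-k_0$ smallest ones into the deformation $\sigma$ so that the remaining $k_0$ frequencies lie in $\Gamma_\xi^\sigma$, absorbing their contribution to $\Phi$ into a shift of $\alpha$, and then invoking the flexible frequency-restricted estimate at level $k_0$ --- is the correct high-level strategy and matches the mechanism the paper is implicitly relying on (this is precisely why the deformed hyperplanes $\Gamma_\xi^\sigma$ and the notion of $k_0$-descent are introduced in Section~\ref{sec:multilinear}).

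That said, what you have written is a plan rather than a proof, and you correctly flag the genuine difficulties yourself. The bookkeeping you defer --- showing that the integration over the discarded frequencies $\xi_j$, $j\in J$, produces factors that are summable against the gap $s-s_c(k,d)$, and handling the borderline case where several frequencies are comparable to $N_{k_0}$ --- is exactly where the work lies. In particular, your claim that ``the weights $\langle\xi_j\rangle^{-s}$ and the measures $d\xi_j$ for $j\in J$ are controlled by powers of $\langle\xi\rangle$'' needs to be made quantitative: one must check that each such integration contributes at most $N_{k_0}^{d-s}$ (or the analogous scaling), and that the total exponent matches the difference $s_c(k_0,d)-s_c(k,d)$. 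Without this computation the argument is incomplete. Since the full details are in \cite{CorLei} and the present paper does not reproduce them, your sketch is adequate as a description of the method but would not stand alone as a proof.
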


\remark\label{remark:avoid}{(\cite[Remark 2.2]{CorLei})} The conclusion remains valid if, in the definition of the frequency-restricted estimate, the set $\Gamma_\xi^\sigma$ is replaced with
\begin{equation*}
\Gamma_\xi^\sigma(c)=\Gamma_\xi^\sigma\cap \{(\xi_1,\dots,\xi_{k_0})\in(\R^d)^{k_0}:|\xi_1|\sim\cdots\sim |\xi_{k_0}|\Rightarrow \sigma \not\simeq c\xi\} ,
\end{equation*}
for any $c\in \R$ such that
\begin{equation*}
c\neq 1-\frac{k_0}{K}, \quad k_0\leq K\leq k.
\end{equation*}

As a consequence of the proof of Theorem \ref{teo:indk}, one can also prove an induction result for regularities $s>\frac d2$. This is particularly well suited for our purposes, since the multilinear estimates \eqref{eq:aim1_mult}-\eqref{eq:aim2_mult} and \eqref{eq:aim1s_multi}-\eqref{eq:aim2s_multi} are stated for $s=1$.

\begin{corollary}\label{cor:indk}
Fix $k\geq 3$ and suppose $s>\frac{d}{2}$. If there exists $k_0 < k$ such that one has
\begin{equation*}
|m_k(\xi_1,\dots,\xi_k)|\lesssim |m_{k_0}(\widetilde{\xi_1},\dots,\widetilde{\xi_{k_0}})|
\end{equation*} 
and, for all $k_0$-descents of $\Phi$, a flexible frequency-restricted estimate holds, then the conditions of Lemma \ref{lemma:freqrestr} are satisfied.
\end{corollary}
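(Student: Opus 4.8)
The plan is to deduce Corollary~\ref{cor:indk} from Theorem~\ref{teo:indk} by checking that the hypotheses specialize correctly when $s>\frac d2$. First I would observe that the obstruction to applying Theorem~\ref{teo:indk} verbatim is that the latter is stated for $s>s_c(k,d)$ and with the auxiliary requirement $s_{\text{LWP}}=s_c(k_0,d)$; for $s>\frac d2$ these scaling-critical bookkeeping constraints become irrelevant, because $\frac d2$ dominates all the relevant thresholds and the $\epsilon_c$-type restrictions are satisfied automatically (one may take $\epsilon$ as small as needed, which is all the reduction in Section~\ref{sec:multilinear} requires). So the strategy is to re-run the proof of Theorem~\ref{teo:indk} with the critical exponents replaced by $\frac d2$, noting that every place where $s_c(k_0,d)$ or $\epsilon_c(k_0,d)$ was used only the inequalities $s>\frac d2$ and "$\epsilon$ small" are actually invoked.

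Concretely, the key steps are: (i) Recall from the proof of Theorem~\ref{teo:indk} the induction mechanism --- a frequency-restricted estimate for the $k$-linear symbol is built from a flexible frequency-restricted estimate for the $k_0$-descent $\Phi_{k_0}$ together with an elementary "filling in" of the remaining $k-k_0$ low frequencies, using the hypothesis $|m_k(\xi_1,\dots,\xi_k)|\lesssim |m_{k_0}(\widetilde\xi_1,\dots,\widetilde\xi_{k_0})|$ to transfer the bound on the symbol. (ii) Split the frequency space according to whether the $k_0$ largest frequencies are comparable or genuinely separated; in the separated regime the extra $\langle\xi_j\rangle^{-s}$ weights on the discarded small frequencies are summable because $s>\frac d2$ (this is exactly where $\frac d2$, rather than a smaller critical exponent, is needed), and in the comparable regime Remark~\ref{remark:avoid} lets one excise the degenerate directions $\sigma\simeq c\xi$. (iii) Verify that the two flexible estimates \eqref{eq:ffre1}--\eqref{eq:ffre2}, each bounded by $M^{1^-}$, combine via Cauchy--Schwarz on $\Gamma_\xi$ (splitting the symbol as $(\mathcal M_1\mathcal M_2)^{1/2}$ as in Lemma~\ref{lemma:freqrestr}) to give the single $M^\beta$ bound with $\beta<1$ demanded by Lemma~\ref{lemma:freqrestr}; the choice of the admissible $b,b'$ then follows since $2b'<-\beta$ is satisfiable for $\beta<1$. (iv) Conclude that the hypotheses of Lemma~\ref{lemma:freqrestr} hold with $s$ replaced by our $s=1>\frac d2$ and $\epsilon$ arbitrarily small, which is precisely the statement of the corollary.

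I expect the main obstacle to be the regime where the $k_0$ largest frequencies are all comparable in size: here the descent $\Phi_{k_0}$ may degenerate (its Hessian or the relevant sub-level set of $\Phi$ can become large), and one must argue that the excluded directions in $\Gamma_\xi^\sigma(c)$ are exactly the bad ones, i.e.\ that the constants $c=1-\frac{k_0}{K}$ avoided in Remark~\ref{remark:avoid} cover all resonant configurations produced by collapsing $k-k_0$ frequencies to zero. Checking this compatibility --- that the "forbidden" values of $c$ arising from the descent structure coincide with those excluded by the remark --- is the delicate combinatorial point; once it is in place, the summation over the small frequencies and the Cauchy--Schwarz gluing are routine, given $s>\frac d2$.
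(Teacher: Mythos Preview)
Your proposal is correct and matches the paper's approach: the paper does not give a separate proof of Corollary~\ref{cor:indk} but simply states it ``as a consequence of the proof of Theorem~\ref{teo:indk}'', and you have correctly identified the mechanism --- re-running that proof with the observation that $s>\tfrac d2$ makes the weights $\langle\xi_j\rangle^{-s}$ on the discarded low frequencies integrable, so the scaling-critical bookkeeping ($s_{\text{LWP}}=s_c(k_0,d)$, $\epsilon<\epsilon_c$) is no longer needed. Your discussion of the comparable-frequency regime and the role of Remark~\ref{remark:avoid} is also on point.
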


In order to obtain the frequency-restricted estimates described above, we will need the following auxiliary lemma.
\begin{lemma}[{\cite[Lemma 5]{COS}}] \label{lemma:auxiliar} For $M,N>0$ and $\alpha\in \R$ fixed,
\begin{equation}
\int_{|p|,|q|<N} \mathbbm{1}_{|p^2\pm q^2-\alpha|<M}dpdq\lesssim M^{1^-}N^{0^+}
\end{equation}
and
\begin{equation}
\int \mathbbm{1}_{|p^2-\alpha|<M}dp\lesssim M^{1/2}.
\end{equation}

\end{lemma}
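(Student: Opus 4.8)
The plan is to reduce both bounds to one–dimensional measure estimates, dealing with the indefinite quadratic form $p^2-q^2$ by a linear change of variables. I expect the $-$ sign in the two–variable estimate to be the only real difficulty; the rest is bookkeeping.

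\emph{The single integral.} I would dispose of the second estimate first, as it is elementary. Set $S=\{p\in\R:|p^2-\alpha|<M\}$. If $\alpha\le M$ then $S\subseteq\{p:p^2<2M\}$, so $|S|\le 2\sqrt{2M}$; if $\alpha>M$ then $S$ is a union of two intervals about $\pm\sqrt{\alpha}$ of total length $2(\sqrt{\alpha+M}-\sqrt{\alpha-M})=\frac{4M}{\sqrt{\alpha+M}+\sqrt{\alpha-M}}\le\frac{4M}{\sqrt{2M}}$. In either case $|S|\lesssim M^{1/2}$. (Alternatively: on $\{|p|\ge r\}$ the map $p\mapsto p^2$ has derivative $\gtrsim r$, so the preimage of an interval of length $2M$ has measure $\lesssim M/r$ there, while $\{|p|<r\}$ has measure $2r$; optimizing at $r=M^{1/2}$ gives the bound.)

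\emph{The double integral, $+$ sign.} The sublevel set $\{|p|,|q|<N,\ |p^2+q^2-\alpha|<M\}$ lies in the intersection of the disc $\{p^2+q^2<2N^2\}$ with the annulus $\{(\alpha-M)_+<p^2+q^2<(\alpha+M)_+\}$; in polar coordinates its area is $\pi$ times the length of the interval of admissible values of $r^2=p^2+q^2$, which is at most $\min(2M,2N^2)$ since $t\mapsto t_+$ is $1$-Lipschitz. Hence the integral is $\lesssim\min(M,N^2)$, and for every $\epsilon>0$ one has $\min(M,N^2)\lesssim M^{1-\epsilon}N^{2\epsilon}$ — bound $M=M^{1-\epsilon}M^\epsilon\le M^{1-\epsilon}N^{2\epsilon}$ when $M\le N^2$, and $N^2=(N^2)^{1-\epsilon}(N^2)^\epsilon\le M^{1-\epsilon}N^{2\epsilon}$ when $M>N^2$ — which is the asserted $M^{1^-}N^{0^+}$.

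\emph{The double integral, $-$ sign.} This is where a logarithm genuinely appears (the level sets of $p^2-q^2$ are hyperbolas), and it is precisely what the factor $N^{0^+}$ absorbs; this is the main obstacle. I would split into two regimes. If $M\ge N^2$, Fubini together with the single–variable bound for the inner integral, $\int\mathbbm{1}_{|p^2-(\alpha+q^2)|<M}\,dp\lesssim M^{1/2}$, gives $\lesssim M^{1/2}N$; since $N^2\le M$ the exponents match ($(1-2\epsilon)/(\tfrac12-\epsilon)=2$), so $M^{1/2}N\le M^{1-\epsilon}N^{2\epsilon}$ because $M^{1/2}N/(M^{1-\epsilon}N^{2\epsilon})=(N^2/M)^{1/2-\epsilon}\le 1$. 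If $M<N^2$, I would change variables $u=p+q$, $v=p-q$, so that $p^2-q^2=uv$, the Jacobian is constant, and $\{|p|,|q|<N\}$ maps into $\{|u|,|v|<2N\}$; the task becomes bounding $\iint_{|u|,|v|<2N}\mathbbm{1}_{|uv-\alpha|<M}\,du\,dv$. Splitting at $|v|=M/(2N)$: the slab $\{|v|\le M/(2N)\}$ contributes at most its area, $\lesssim (M/N)\cdot N=M$; on $\{|v|>M/(2N)\}$ the set $\{u:|u-\alpha/v|<M/|v|\}$ has length $2M/|v|$, so the remaining part is $\lesssim\int_{M/(2N)}^{2N}\frac{M}{v}\,dv\lesssim M\log(N^2/M)$. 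Altogether the integral is $\lesssim M\log(2+N^2/M)$, and since $\log(2+x)\lesssim_{\delta}x^{\delta}$ and $M^{\delta}\le N^{2\delta}$ (as $M<N^2$), this is $\lesssim_{\delta} M^{1-\delta}N^{2\delta}=M^{1^-}N^{0^+}$, completing the plan.
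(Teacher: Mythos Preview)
Your argument is correct. The paper does not actually prove this lemma; it is quoted verbatim from \cite{COS} and used as a black box, so there is no in-paper proof to compare against. Your direct approach --- the elementary sublevel-set bound for the single integral, the polar-coordinate area estimate for the $+$ case, and the change of variables $u=p+q$, $v=p-q$ reducing the $-$ case to the hyperbolic measure bound $\iint_{|u|,|v|<2N}\mathbbm{1}_{|uv-\alpha|<M}\lesssim M\log(2+N^2/M)$ --- is the standard one and matches what one finds in \cite{COS}. One cosmetic remark: in the last line the clause ``and $M^{\delta}\le N^{2\delta}$'' is superfluous, since $M\cdot(N^2/M)^\delta$ already equals $M^{1-\delta}N^{2\delta}$; also your appeal to $\log(2+x)\lesssim_\delta x^\delta$ implicitly uses $x=N^2/M>1$, which is exactly the regime you are in.
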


The last lemma will also be useful to reduce the derivation of \eqref{eq:ffre1}-\eqref{eq:ffre2} to an almost equivalent frequency-restricted estimate with a full power of $M$.

\begin{lemma}[{\cite[Lemma 3.3]{CLS}}]\label{lem:eta1}
	Let $K:\R^n\times \R^m\times \R\to \R^+$ be a positive measurable function such that, for some $N\in \mathbb{N}$,
	$$
	|K(x,y,M)|\lesssim \max\{1,|x|,|y|\}^N,\quad \forall x\in\R^n,\ y\in \R^m, \ M\in \R.
	$$
	Suppose that
	$$
	\sup_y \int K(x,y,M)\max\{1,|x|,|y|\}^{0^+} dx \lesssim M,\quad \mbox{for all }M>1.
	$$
	Then there exists $0<\eta<1$ such that
	$$
	\sup_y \int K(x,y,M) dx \lesssim M^{\eta},\quad \mbox{for all } M>1.
	$$
\end{lemma}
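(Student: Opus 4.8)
The plan is to obtain the claim by a simple interpolation between the weighted bound in the hypothesis and the polynomial pointwise bound on $K$, optimizing over a single cutoff parameter. Throughout I would write $w=w(x,y):=\max\{1,|x|,|y|\}$ and fix a small $\delta>0$ for which $\sup_y\int K(x,y,M)\,w^{\delta}\,dx\lesssim M$ holds for all $M>1$ (this is the content of the exponent $0^+$). Fix $M>1$, $y\in\R^m$, and a parameter $R=R(M)\ge 1$ to be chosen at the end; the idea is to split the integral as $\int_{\R^n}K\,dx=\int_{\{w>R\}}K\,dx+\int_{\{w\le R\}}K\,dx$.

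For the first piece, bounding $w^{-\delta}<R^{-\delta}$ on $\{w>R\}$ and invoking the hypothesis gives
\begin{equation*}
\int_{\{w>R\}}K(x,y,M)\,dx\le R^{-\delta}\int_{\R^n}K(x,y,M)\,w^{\delta}\,dx\lesssim R^{-\delta}M,
\end{equation*}
with implicit constant independent of $y$. For the second piece, note that $\{w\le R\}$ is contained in $\{|x|\le R\}\subset\R^n$ (and is empty unless $|y|\le R$), a set of Lebesgue measure $\lesssim R^n$; using the pointwise bound $K\lesssim w^N\le R^N$ on this set, which is uniform in $M$, one gets
\begin{equation*}
\int_{\{w\le R\}}K(x,y,M)\,dx\lesssim R^{N+n}.
\end{equation*}

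Adding the two estimates yields $\int_{\R^n}K(x,y,M)\,dx\lesssim R^{N+n}+R^{-\delta}M$ for every $R\ge 1$, uniformly in $M>1$ and in $y$. The right-hand side is (up to constants) minimized by the choice $R=M^{1/(N+n+\delta)}\ge 1$, which balances the two terms and produces $\sup_y\int_{\R^n}K(x,y,M)\,dx\lesssim M^{\eta}$ with $\eta:=\tfrac{N+n}{N+n+\delta}\in(0,1)$, which is exactly the assertion. The argument is entirely soft; the only points that need a little care are that the constants coming from the hypothesis and from the pointwise estimate are uniform in $y$ (so that the supremum commutes with the splitting and the optimization) and that $\{w\le R\}$ always sits inside a ball of radius $R$ in $\R^n$ regardless of the size of $|y|$. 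I do not anticipate any genuine obstacle here: the lemma is just a quantitative upgrade converting a linear-in-$M$ weighted bound into a strictly sublinear unweighted one.
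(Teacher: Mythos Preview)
The paper does not supply a proof of this lemma; it is quoted verbatim from \cite{CLS} and used as a black box. Your argument is correct: splitting at the scale $w=R$, using the weighted bound to handle $\{w>R\}$ and the uniform pointwise growth bound together with $|\{|x|\le R\}|\lesssim R^n$ to handle $\{w\le R\}$, and then balancing $R^{-\delta}M$ against $R^{N+n}$ is exactly the standard interpolation that produces $\eta=(N+n)/(N+n+\delta)$. This is essentially the proof one finds in the cited reference, so there is nothing to contrast.
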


\subsection{Estimates for the generalized KdV equation}\label{sec:multilinear_kdv}
The following lemma provides a proof for the multilinear estimate \eqref{eq:aim1_mult}. This estimate is akin with a nonlinear smoothing estimate with a gain of $\theta<2$ derivatives, which is more than the known nonlinear smoothing effect of $\theta<1$ derivatives shown in \cite{CorLei}. This improved smoothing is a consequence of having the extra derivatives distributed through various frequencies, instead of being concentrated in the outgoing frequency.

\begin{lemma}\label{lemma:estkdv} For $k=4$ and $\theta\in [1,2)$, the flexible frequency-restricted estimate associated with \eqref{eq:aim1_mult} holds.
\end{lemma}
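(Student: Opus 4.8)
The goal is to verify that the flexible frequency-restricted estimate \eqref{eq:ffre1}--\eqref{eq:ffre2} holds for the $5$-linear KdV symbol associated to \eqref{eq:aim1_mult}, with $k=4$, $s=1$, $d=1$, phase $\Phi(\xi_1,\dots,\xi_5)=\xi^3-\sum_{j=1}^5\xi_j^3$, and multiplier $m_5\simeq |\xi|\,|\xi_1|^{\theta-1}|\xi_2|$ (where we have ordered $|\xi_1|\ge\cdots\ge|\xi_5|$). Since the target space in \eqref{eq:aim1_mult} is $X^{1,b-1}$ and the input is $X^{1,b}$, after cancelling $\langle\xi\rangle/\prod\langle\xi_j\rangle$ weights the effective gain to distribute is $\epsilon$-worth of extra smoothing, and the real content is that the derivative loss $|\xi|\,|\xi_1|^{\theta-1}|\xi_2|$ can be absorbed by a splitting into $\mathcal M_1,\mathcal M_2$ each of which is integrable against the curve $\{\Phi=\alpha\}$ with a full power of $M$. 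The plan is therefore: (i) fix the index set $A$ and the weight splitting; (ii) reduce the two resulting integrals to the $k_0$-descent situation of Corollary \ref{cor:indk}, where $k_0=3$, so that only the three largest frequencies $\xi_1,\xi_2,\xi_3$ genuinely matter and the rest are summed away by Young/Hölder; and (iii) carry out the resulting trilinear frequency-restricted estimate using the KdV resonance identity.

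Concretely, I would take $A=\{0\}$ (or equivalently $A=\{1,2,3,4,5\}$, whichever is notationally lighter) and split the weight symmetrically: writing $s_j+s_j'=2$ and $\epsilon_0+\epsilon_0'=2\epsilon$, I would put all the derivative loss into $\mathcal M_1$, i.e. $\mathcal M_1 \simeq |\xi|^{2}\langle\xi_1\rangle^{2(\theta-1)}\langle\xi_2\rangle^{2}\cdot(\text{balancing }\langle\xi_j\rangle^{-2s_j}\text{ factors})$ and $\mathcal M_2\simeq 1$, then adjust: since $\theta<2$ the exponent $\theta-1<1$, so the bad factor on $\xi_1$ is strictly subquadratic, which is exactly the margin that makes the argument work. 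The key algebraic input is the KdV factorization on $\Gamma_\xi$: for a true $3$-wave interaction $\xi=\xi_1+\xi_2+\xi_3$ one has $\Phi = 3(\xi-\xi_1)(\xi-\xi_2)(\xi-\xi_3)$ (up to sign and the deformation $\sigma$), so $d\Phi$ with two frequencies frozen is comparable to a product of differences of the large frequencies. Freezing $\xi$ and $\xi_{j\notin A}$, I integrate in the remaining large frequencies: using Lemma \ref{lemma:auxiliar} (or rather its cubic analogue, the standard KdV bilinear-in-$\Phi$ estimate) the constraint $|\Phi-\alpha|<M$ localizes one variable to a set of measure $\lesssim M/(\text{difference of large frequencies})$, and the gained denominator is what eats the numerator $|\xi|^2\langle\xi_1\rangle^{2(\theta-1)}\langle\xi_2\rangle^2$; because $\theta-1<1$ there is a power of the largest difference left over, guaranteeing $\lesssim M^{1^-}$ rather than merely $M$. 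The leftover small frequencies $\xi_4,\xi_5$ are handled trivially since their weights $\langle\xi_j\rangle^{-2s_j}$ with $s_j\ge 0$ are bounded and the corresponding integrations are over compact-in-the-relevant-sense regions after using $|\xi_j|\le|\xi_3|$; Lemma \ref{lem:eta1} then upgrades the $M^{1^-}\langle\cdot\rangle^{0^+}$ bound to a genuine $M^{\eta}$, $\eta<1$.

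The subtlety, and where I expect the main obstacle, is the degenerate/resonant configurations where $\Phi$ fails to be genuinely transverse in the integration variables — precisely when $\xi$ coincides with one of the $\xi_j$ or, worse, when $|\xi_1|\sim\cdots\sim|\xi_{k_0}|$ and $\sigma\simeq c\xi$ for the forbidden values $c=1-k_0/K$. This is exactly the scenario Remark \ref{remark:avoid} is designed to excise: one restricts to $\Gamma_\xi^\sigma(c)$, and I would need to check that in our case the relevant $c$ (here $c\in\{1-3/3,1-3/4,1-3/5\}=\{0,1/4,2/5\}$) are indeed avoidable, i.e. that on the complementary region the resonance function has a nonvanishing derivative in at least one of the two largest free frequencies. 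I would also need to dispose of the "high-high-to-low" pocket where $|\xi|\ll|\xi_1|\sim|\xi_2|$: there $|\xi|^2$ is small, which helps, but one must make sure the $\langle\xi\rangle^{-1}$ in the output norm doesn't cost more than the available smoothing; the strict inequality $\theta<2$ again provides the slack. A secondary technical point is that the multilinear symbol $|\xi_1|^{\theta-1}|\xi_2|$ is only defined after ordering the frequencies, so one must either symmetrize (sum over which two inputs carry the derivatives) or fix the ordering region $|\xi_1|\ge\cdots\ge|\xi_5|$ at the outset and note that all other orderings are handled identically by relabeling; I would do the latter. Everything else — the descent from $k=4$ to $k_0=3$, the passage from $M^{1^-}\langle\cdot\rangle^{0^+}$ to $M^\eta$, and the Hölder estimates on the small frequencies — is routine given Corollary \ref{cor:indk} and Lemmas \ref{lemma:auxiliar} and \ref{lem:eta1}.
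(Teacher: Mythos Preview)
Your plan diverges from the paper's proof and has concrete gaps.

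The paper works directly at the $5$-linear level, without invoking Corollary~\ref{cor:indk} or any trilinear factorization. After ordering $|\xi_1|\ge\cdots\ge|\xi_5|$ and restricting to the worst case $|\xi|\sim|\xi_1|$, the multiplier is bounded by $|\xi|^\theta/(\langle\xi_3\rangle\langle\xi_4\rangle\langle\xi_5\rangle)$. The proof then branches: if all frequencies are comparable (Case~0), the weight is rewritten as the standard gKdV multiplier at regularity $3/4$ with smoothing gain $\epsilon=\theta-1<1$, and one quotes \cite[Proposition~3.1]{CorLei}. Otherwise $|\xi_5|\ll|\xi_1|$, and the paper chooses two-variable integration sets --- $(\xi_1,\xi_3)$ for one inequality and $(\xi,\xi_2)$ or $(\xi,\xi_4)$ for the dual one, depending on whether $|\xi_3|\gtrsim|\xi_2|$ or $|\xi_3|\ll|\xi_2|$ --- together with asymmetric exponents $(s_j,s_j')$ that place a factor $\langle\xi_4\rangle^{-2}$ or $\langle\xi_5\rangle^{-2}$ where needed. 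The entire computation then reduces to the one-variable change $\xi_1\mapsto\Phi$ (respectively $\xi\mapsto\Phi$) with the elementary Jacobian bound $|\partial_{\xi_1}\Phi|=3|\xi_1^2-\xi_5^2|\gtrsim|\xi|^2$, valid precisely because $|\xi_5|\ll|\xi_1|$ (respectively $|\xi_2|\not\simeq|\xi|$). No factorization of $\Phi$ and no appeal to Lemma~\ref{lemma:auxiliar} enters.

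Your choice $A=\{0\}$ with the asymmetric splitting $\mathcal{M}_2\simeq 1$ is not workable: the complementary integral then ranges over four free frequencies with only the constraint $|\Phi-\alpha|<M$, and there is no mechanism to obtain a bound $\lesssim M^{1^-}$. (Your sentence ``freezing $\xi$ and $\xi_{j\notin A}$'' is also internally inconsistent, since with $A=\{0\}$ these together exhaust all variables.) More substantially, you never actually establish the trilinear base case that Corollary~\ref{cor:indk} would require as input; you invoke the identity $\Phi=3(\xi-\xi_1)(\xi-\xi_2)(\xi-\xi_3)$, but on the \emph{deformed} surface $\Gamma_\xi^\sigma$ this exact factorization is lost, and one is pushed back into precisely the kind of case analysis the paper carries out directly at the $5$-linear level. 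Finally, the discussion of avoidable deformations via Remark~\ref{remark:avoid} is unnecessary for this lemma: the paper invokes that device only in the NLS estimate, not here.
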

\begin{proof}
We begin by ordering $|\xi_1|\geq \cdots \geq |\xi_5|$ and consider the worst-case scenario $|\xi|\ge |\xi_1|$, which implies\footnote{We say that $a\lesssim b$ if there exists a universal constant such that $|a|\le C|b|$. If $C$ can be chosen arbitrarily small, we write $a\ll b$. If $a\lesssim b$ and $b\lesssim a$, we say that $a\sim b$. Finally, we write $a\simeq b$ if $|a-b|\ll \max\{|a|,|b|\}$.} that $|\xi_1|\sim |\xi|$. As the case $|\xi|<1$ is trivial, we suppose that $|\xi|>1$. Consequently,
\begin{equation}
\frac{|\xi|\langle\xi\rangle|\xi_1|^{\theta-1}|\xi_2| }{\langle \xi_1\rangle\langle \xi_2\rangle\langle \xi_3\rangle\langle \xi_4\rangle\langle \xi_5\rangle}\lesssim \frac{|\xi|^{\theta}}{\langle \xi_3\rangle\langle \xi_4\rangle\langle \xi_5\rangle}
\end{equation}
\textbf{Case 0. $|\xi_5|\gtrsim |\xi_1|$.}
As all frequencies are comparable, we may bound
$$
\frac{|\xi|\langle\xi\rangle|\xi_1|^{\theta-1}|\xi_2| }{\langle \xi_1\rangle\langle \xi_2\rangle\langle \xi_3\rangle\langle \xi_4\rangle\langle \xi_5\rangle}\lesssim \frac{|\xi|^{\theta}}{\langle \xi_3\rangle\langle \xi_4\rangle\langle \xi_5\rangle} \lesssim \frac{|\xi|\jap{\xi}^{\frac34 + \epsilon}}{\prod_{j=1}^5 \jap{\xi_j}^{\frac34}}, \quad \epsilon=\theta-1.
$$
This reduces to the flexible frequency-restricted estimate shown in \cite[Proposition 3.1]{CorLei} for $\epsilon<1$.

\bigskip

Henceforth, we focus on the case $|\xi_5|\ll |\xi_1|$. This implies that $|\xi_4|\not\simeq |\xi|$, otherwise
$$
\xi\simeq\xi_1+\xi_2+\xi_3+\xi_4\simeq k\xi, \quad k\text{ even},
$$
which is absurd.

By Lemma \ref{lem:eta1}, it suffices to derive the frequency-restricted estimates with a full power of $M$. We split the proof in two cases.

\noindent
\textbf{Case 1.} $|\xi_3|\gtrsim |\xi_2|$.  We show that 
\begin{equation}\label{eq:kdv_smooth_1}
\sup_{\sigma,\alpha,\xi,\xi_2,\xi_4}\int_{\Gamma_\xi^\sigma}\frac{|\xi|^{\theta}}{\langle\xi_3\rangle\langle\xi_5\rangle^2}\mathbbm{1}_{|\Phi-\alpha|<M}d\xi_1d\xi_3\lesssim M^{1^-}
\end{equation}
and
\begin{equation}
\sup_{\sigma,\alpha,\xi_1,\xi_3,\xi_5}\int_{\Gamma_\xi^\sigma}\frac{|\xi|^{\theta}}{\jap{\xi_3}\langle\xi_4\rangle^2}\mathbbm{1}_{|\Phi-\alpha|<M}d\xi d\xi_2\lesssim M^{1^-}.
\end{equation}
For the first estimate, let us fix $\xi,\xi_2,\xi_4$. Observe that
\begin{equation}
\left|\frac{\partial\Phi}{\partial\xi_1}\right|=3|\xi_1^2-\xi_5^2|\gtrsim |\xi_1|^2\sim|\xi|^2
\end{equation}
and so we can perform the change of variable $\xi_1\mapsto \Phi$:
\begin{align}
\int_{\Gamma_\xi^\sigma}\frac{|\xi|^{\theta+0^+}}{\langle\xi_3\rangle\langle\xi_5\rangle^2}\mathbbm{1}_{|\Phi-\alpha|<M}d\xi_1d\xi_3 &\lesssim \int_{\Gamma_\xi^\sigma}\frac{|\xi|^{\theta-2+0^+}}{\langle\xi_3\rangle}\mathbbm{1}_{|\Phi-\alpha|<M}d\Phi d\xi_3\\
&\lesssim |\xi|^{\theta-2+0^+}M\int_{|\xi_3|\le|\xi|}\frac{1}{\langle\xi_3\rangle}d\xi_3\\
&\lesssim M |\xi|^{\theta-2+0^+}\ln |\xi| \lesssim M.
\end{align}
for $\theta\in[1,2)$. Applying Lemma \ref{lem:eta1}, we find \eqref{eq:kdv_smooth_1}. The second estimate follows from the same steps, using the fact that $|\xi_3|\gtrsim |\xi_2|$.

\noindent
\textbf{Case 2.} $|\xi_3|\ll |\xi_2|$. We have $|\xi_2|\not\simeq |\xi|$, since otherwise $\xi\simeq \xi_1+\xi_2 \simeq k\xi$, $k$ even.

We then prove that
\begin{equation}
\sup_{\sigma,\alpha,\xi,\xi_2,\xi_4}\int_{\Gamma_\xi^\sigma}\frac{|\xi|^{\theta}}{\langle\xi_3\rangle^2\langle\xi_5\rangle^2}\mathbbm{1}_{|\Phi-\alpha|<M}d\xi_1d\xi_3\lesssim M^{1^-}
\end{equation}
and
\begin{equation}\label{eq:kdv_smooth_2}
\sup_{\sigma,\alpha,\xi_1,\xi_3,\xi_5}\int_{\Gamma_\xi^\sigma}\frac{|\xi|^{\theta}}{\langle \xi_4\rangle^2}\mathbbm{1}_{|\Phi-\alpha|<M}d\xi d\xi_4\lesssim M^{1^-}.
\end{equation}
The first estimate follows exactly as in Case 1. For the second, let us fix $\xi_1,\xi_3,\xi_5$ and observe that
\begin{equation}
\left|\frac{\partial\Phi}{\partial\xi}\right|=3|\xi^2-\xi_2^2|\gtrsim |\xi|^2\sim|\xi_1|^2.
\end{equation}
In particular, we may perform the change of variables $\xi\mapsto \Phi$:
\begin{equation}
    \int_{\Gamma_\xi^\sigma}\frac{|\xi|^{\theta+0^+}}{\langle \xi_4\rangle^2}\mathbbm{1}_{|\Phi-\alpha|<M}d\xi d\xi_4 \lesssim \int_{\Gamma_\xi^\sigma}\frac{|\xi_1|^{\theta-2+0^+}}{\langle\xi_4\rangle^2}\mathbbm{1}_{|\Phi-\alpha|<M}d\Phi d\xi_4\lesssim M.
\end{equation}
Estimate \eqref{eq:kdv_smooth_2} is now a consequence of Lemma \ref{lem:eta1}.

\end{proof}

For \eqref{eq:aim2_mult}, we do not need to use any sort of smoothing effect. Heuristically, when compared with \eqref{eq:aim1_mult}, we see that there are two derivatives missing and thus, instead of having to gain $\theta<2$ derivatives, we do not need to gain any derivatives at all.

\begin{lemma}\label{lemma:estkdv} For $k=2$ and $\theta\in [1,2)$, the flexible frequency-restricted estimate associated with \eqref{eq:aim2_mult} holds.
\end{lemma}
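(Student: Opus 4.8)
The plan is to deduce this from the argument of the preceding lemma (the $k=4$ case of \eqref{eq:aim1_mult}), exploiting that the symbol in \eqref{eq:aim2_mult} carries two fewer derivatives and hence requires no genuine smoothing gain. We establish the $5$-linear flexible frequency-restricted estimate (the case $k=2$); the cases $k\ge 3$ of \eqref{eq:aim2_mult} in Lemma \ref{lem:multi_kdv} then follow by Corollary \ref{cor:indk}, since the derivatives $|D_x|^\theta,|D_x|$ fall on only two of the factors and thus the $(2k+1)$-linear symbol is controlled by the $5$-linear one evaluated at the five largest frequencies. Ordering $|\xi_1|\ge\cdots\ge|\xi_5|$ and noting that, by symmetry and since $|\xi_2|\le\langle\xi_2\rangle$, the worst case is when the $\theta$-derivative falls on the largest frequency, the multiplier to control, against the gKdV phase $\Phi(\xi_1,\dots,\xi_5)=\xi^3-\sum_{j=1}^5\xi_j^3$ on the deformed hyperplane $\Gamma_\xi^\sigma$, is
\[
\mu=\frac{|\xi_1|^\theta\,|\xi_2|}{\langle\xi\rangle\langle\xi_1\rangle\langle\xi_2\rangle\langle\xi_3\rangle\langle\xi_4\rangle\langle\xi_5\rangle}.
\]

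The first step is to bound $\mu$: since $\theta<2$ and $|\xi_1|^{\theta-1}\le\langle\xi_1\rangle^{\theta-1}$ (the case $|\xi_1|\le 1$, where all frequencies are bounded, being trivial),
\[
\mu\ \lesssim\ \frac{\langle\xi_1\rangle^{\theta-1}}{\langle\xi\rangle\,\langle\xi_3\rangle\langle\xi_4\rangle\langle\xi_5\rangle}\ \le\ \frac{\langle\xi_1\rangle^{\theta-1}}{\langle\xi_3\rangle\langle\xi_4\rangle\langle\xi_5\rangle}\ \le\ \frac{\langle\xi_1\rangle^{\theta}}{\langle\xi_3\rangle\langle\xi_4\rangle\langle\xi_5\rangle},
\]
so $\mu$ is dominated, up to constants, by the multiplier treated in the proof of the preceding lemma \emph{with one derivative removed from the top frequency}; crucially, this bound uses no relation between $|\xi|$ and $|\xi_1|$. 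By Lemma \ref{lem:eta1} it suffices to establish the frequency-restricted estimates with a full power of $M$. One then runs the same case analysis as in the preceding lemma. If all five frequencies are comparable, then $\langle\xi_3\rangle\langle\xi_4\rangle\langle\xi_5\rangle\sim\langle\xi_1\rangle^3$, so $\mu\lesssim\langle\xi_1\rangle^{\theta-4}$, and bounding the $\Phi$-level set trivially by the area $\sim\langle\xi_1\rangle^2$ of a two-dimensional box gives each frequency-restricted integral $\lesssim\langle\xi_1\rangle^{\theta-2}\lesssim 1\lesssim M$; no appeal to \cite[Proposition 3.1]{CorLei} is needed here, in contrast with the preceding lemma, whose comparable-frequency multiplier is larger. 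Otherwise there is a frequency gap, and — excluding the only potentially resonant configuration through the parity of the gKdV nonlinearity, exactly as in the preceding lemma ($\xi$ cannot be comparable to a sum of evenly many near-maximal frequencies) — one of the phase derivatives $|\partial_{\xi_1}\Phi|=3|\xi_1^2-\xi_5^2|$, $|\partial_{\xi_2}\Phi|=3|\xi_2^2-\xi_4^2|$ or $|\partial_\xi\Phi|=3|\xi^2-\xi_2^2|$ is $\gtrsim|\xi_1|^2$; a change of variables into $\Phi$ along that variable turns the frequency-restricted integral into $\lesssim\langle\xi_1\rangle^{\theta-1}\,|\xi_1|^{-2}\,M\,\ln\langle\xi_1\rangle\lesssim M$ (using $\theta<3$ and $\int\langle\xi_j\rangle^{-1}\,d\xi_j\lesssim\ln$ over a range $\lesssim\langle\xi_1\rangle$), which completes the full-power estimate.

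The only point requiring genuine care is, as in the preceding lemma, the case bookkeeping that guarantees a phase derivative of size $\gtrsim|\xi_1|^2$ in each configuration; this is dictated entirely by the parity of the nonlinearity. Beyond that nothing is delicate: unlike for \eqref{eq:aim1_mult}, the effective multiplier is here at worst $\langle\xi_1\rangle^{\theta-1}$ with $\theta-1<1$ — so no smoothing beyond the classical range of \cite{CorLei} is used — and it is negligible in every resonant-looking configuration. In particular, since the bound on $\mu$ never invokes $|\xi|\sim|\xi_1|$, the configuration in which the two largest frequencies cancel (so that $|\xi|\ll|\xi_1|$) is covered at no extra cost: there $|\xi_2|\gg|\xi|$, so $|\partial_\xi\Phi|=3|\xi^2-\xi_2^2|\sim|\xi_1|^2$ already provides the required non-degeneracy.
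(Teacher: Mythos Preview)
Your argument is correct but takes a different route from the paper. The paper's proof is a one-line reduction: since $\theta<2$, in the worst case $|\xi_1|\sim|\xi_2|\ge\dots\ge|\xi_5|\ge|\xi|$ one bounds
\[
\frac{|\xi_1|^{\theta}|\xi_2|}{\langle\xi\rangle\prod_{j=1}^{5}\langle\xi_j\rangle}\ \lesssim\ \frac{|\xi_1|\langle\xi_1\rangle}{\langle\xi\rangle\prod_{j=2}^{5}\langle\xi_j\rangle},
\]
which is exactly the multiplier of the classical estimate $\|\partial_x(u_1\cdots u_5)\|_{X^{1,b'}}\lesssim\prod_j\|u_j\|_{X^{1,b}}$, and then invokes \cite[Proposition~3.1]{CorLei} where the corresponding flexible frequency-restricted estimate is already established. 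You instead discard the helpful factor $\langle\xi\rangle^{-1}$, bound by $\langle\xi_1\rangle^{\theta-1}/(\langle\xi_3\rangle\langle\xi_4\rangle\langle\xi_5\rangle)$, and rerun the case analysis of the preceding lemma from scratch; this goes through because the top-frequency exponent has dropped by one, so both the comparable-frequency case (handled by a crude box bound rather than a further appeal to \cite{CorLei}) and the gap case (same phase-derivative lower bounds $|\partial_{\xi_1}\Phi|,|\partial_\xi\Phi|\gtrsim|\xi_1|^2$) close with room to spare. Your approach is more self-contained and makes transparent that no genuine smoothing is being used here, at the cost of repeating the case bookkeeping; the paper's approach is shorter but leans on the external reference. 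Both are valid proofs of the same statement.
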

\begin{proof}
The multiplier associated with \eqref{eq:aim2_mult} is 
\begin{equation}
\mathcal{M}=\frac{|\xi_1|^{\theta}|\xi_2| }{\langle\xi\rangle\prod_{j=1}^{5}\langle\xi_j\rangle}.
\end{equation}
where $\xi=\xi_1+\cdots+\xi_5$. In the worst-case scenario $|\xi_1|\sim |\xi_2|\geq \cdots\geq |\xi_5|\geq |\xi|$, the estimate is trivial if $|\xi_1|<1$. For $|\xi_1|>1$,
\begin{equation}\label{eq:aim2mult}
\frac{|\xi_1|^{\theta}|\xi_2| }{\langle\xi\rangle\prod_{j=1}^{5}\langle\xi_j\rangle}\lesssim \frac{|\xi_1|\jap{\xi_1}}{\langle\xi\rangle\prod_{j=2}^{5}\langle\xi_j\rangle}.
\end{equation}
This corresponds to the classical multilinear estimate
$$
\left\| \partial_x(u_1\dots u_5) \right\|_{X^{1,b'}}\lesssim \prod_{j=1}^5\|u_j\|_{X^{1,b}},
$$
for which the flexible frequency-restricted estimate was shown in \cite[Proposition 3.1]{CorLei}.
\end{proof}

\subsection{Multilinear estimates for the nonlinear Schrödinger equation}\label{sec:multilinear_nls} In this section. we prove the multilinear estimates \eqref{eq:aim1s_multi} and \eqref{eq:aim2s_multi}.

\begin{lemma}\label{lemma:estsch} For $p=3$ and $\theta\in[1,2)$, the flexible-restricted estimate associated with \eqref{eq:aim1s_multi} holds.
\end{lemma}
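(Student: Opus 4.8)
The plan is to mirror the structure of Lemma \ref{lemma:estkdv} (the gKdV case for $k=4$), adapting the argument to the Schrödinger phase function $L(\xi)=-\xi^2$ and to the cubic nonlinearity $p=3$. For $p=3$, the relevant multiplier associated with \eqref{eq:aim1s_multi} is
\begin{equation*}
\mathcal{M}=\frac{|\xi|\langle\xi\rangle|\xi_1|^{\theta-1}|\xi_2|}{\prod_{j=1}^{3}\langle\xi_j\rangle},
\end{equation*}
where (after ordering $|\xi_1|\ge|\xi_2|\ge|\xi_3|$ and accounting for all conjugation patterns) we have $\xi=\pm\xi_1\pm\xi_2\pm\xi_3$. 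First I would dispose of the trivial region $|\xi|\le 1$ and the region where all frequencies are comparable, $|\xi_3|\gtrsim|\xi_1|$; in the latter case one bounds $\mathcal M\lesssim |\xi|\langle\xi\rangle^{\frac12+\epsilon}/\prod_j\langle\xi_j\rangle^{\frac34}$ with $\epsilon=\theta-1<1$ and quotes the flexible frequency-restricted estimate for the classical cubic NLS from \cite{CorLei}. So the real work is the regime $|\xi_3|\ll|\xi_1|\sim|\xi|$ and $|\xi_2|\not\simeq\xi$ (the latter forced because $\xi\simeq\pm\xi_1\pm\xi_2$ would give $\xi\simeq c\xi$ with $c\in\{0,\pm2\}$, contradicting Remark \ref{remark:avoid}).

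In that regime the key gain comes from the derivative of the phase in the largest variable. With $L(\xi)=-\xi^2$ one has, for the appropriate choice of set $A$,
\begin{equation*}
\left|\frac{\partial\Phi}{\partial\xi_1}\right|=2|\xi_1\mp\xi_3|\sim|\xi_1|\sim|\xi|,
\end{equation*}
so the change of variables $\xi_1\mapsto\Phi$ costs only $|\xi|^{-1}$, not $|\xi|^{-2}$ as in the KdV case. The point is that here we only need to gain $\theta-1<1$ net derivatives (since one derivative sits on $\langle\xi\rangle$ in \eqref{eq:aim1s_multi}, versus the KdV estimate \eqref{eq:aim1_mult} which effectively asks for $\theta<2$), so one power of $|\xi|^{-1}$ together with a spare $\langle\xi_3\rangle^{-1}$ (and, in the subcase $|\xi_3|\gtrsim|\xi_2|$, also $|\xi_3|\gtrsim|\xi_2|$) suffices to close the $\xi_3$-integral with a logarithmic loss $|\xi|^{\theta-2+0^+}\ln|\xi|\lesssim 1$. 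For the reverse estimate \eqref{eq:ffre2} one fixes the complementary set of frequencies and uses $|\partial\Phi/\partial\xi|=2|\xi\mp\xi_2|\sim|\xi|$ to change variables $\xi\mapsto\Phi$, again paying only $|\xi|^{-1}$; the remaining one-variable integral is handled with the second inequality of Lemma \ref{lemma:auxiliar}. As in the KdV proof, I would first establish these bounds with an integrand carrying an extra $\max\{1,\dots\}^{0^+}$ and a full power of $M$, then invoke Lemma \ref{lem:eta1} to upgrade to $M^{1^-}$; equivalently one can set up the two multipliers $\mathcal M_1,\mathcal M_2$ directly with split Sobolev weights $s_j+s_j'=2$ and cite Lemma \ref{lemma:freqrestr} / Corollary \ref{cor:indk}.

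The main obstacle I anticipate is bookkeeping rather than analysis: one must check that in \emph{every} configuration of complex conjugates the two largest frequencies genuinely cannot combine to force $\xi\simeq c\xi$, so that Remark \ref{remark:avoid} applies and one stays on $\Gamma_\xi^\sigma(c)$ with an admissible $c$; and one must verify that the split of Sobolev weights between the two frequency-restricted estimates is consistent (i.e. that the exponent $\theta-1$ can be distributed so that both \eqref{eq:ffre1} and \eqref{eq:ffre2} hold with power $M^{1^-}$). A minor additional point is that, because $p=3$ is below the threshold where Corollary \ref{cor:indk} applies directly through an induction in $k$, the estimate must be proved by hand at the base case, which is exactly what the explicit change-of-variables argument above accomplishes. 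Once the two frequency-restricted bounds are in place, the conclusion is immediate from Lemma \ref{lemma:freqrestr}.
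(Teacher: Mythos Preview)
Your Case~1 argument (the regime $|\xi_3|\ll|\xi|$) is correct and matches the paper's approach exactly: bound the multiplier by $|\xi|^{\theta-1}/\langle\xi_3\rangle$, use $|\partial\Phi/\partial\xi_1|\sim|\xi|$ (resp.\ $|\partial\Phi/\partial\xi|\sim|\xi|$) for the two dual estimates, and invoke Lemma~\ref{lem:eta1}. Two small corrections: first, there is no extra factor $|\xi|$ in the multiplier --- the nonlinearity in \eqref{eq:aim1s_multi} carries no outer $\partial_x$, unlike \eqref{eq:aim1_mult} --- so the correct form is $\langle\xi\rangle|\xi_1|^{\theta-1}|\xi_2|/\prod_j\langle\xi_j\rangle$; your derivative count (``only need $\theta-1<1$ net derivatives'') is in fact consistent with this correct form, so it looks like a transcription slip, but it does make your Case~2 reduction miscalculated. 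Second, since $p=3$ is only trilinear, after the change $\xi_1\mapsto\Phi$ there is no remaining integration variable, so the $\ln|\xi|$ from a $\xi_3$-integral and the appeal to Lemma~\ref{lemma:auxiliar} do not enter here.

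The genuine gap is your treatment of the case $|\xi_3|\gtrsim|\xi_1|$ (all frequencies comparable). You propose to reduce to the classical cubic NLS estimate from \cite{CorLei}, but the $3$-descents of the resonance function here run over \emph{all} sign configurations $\Phi=\xi^2+\lambda_1\xi_1^2+\lambda_2\xi_2^2+\lambda_3\xi_3^2$, $\lambda_j\in\{\pm1\}$, not just the standard $|v|^2v$ pattern. The paper handles this region directly: it shows that for each sign pattern, after possibly permuting $\xi_1,\xi_2,\xi_3$, one is either nonstationary or at a \emph{nondegenerate} stationary point, where the Morse lemma together with the second bound in Lemma~\ref{lemma:auxiliar} gives the required $M^{1/2}$ decay (sufficient since the multiplier is $\lesssim|\xi|^{\theta-2}$ in this region). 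The delicate subcase is $(\lambda_1,\lambda_2,\lambda_3)=(+,+,-)$ with $\xi_1\simeq-\xi_2\simeq-\xi_3$: no permutation removes the degeneracy, and the paper instead verifies that this forces $\sigma\simeq-2\xi$, which is an avoidable deformation by Remark~\ref{remark:avoid}. You anticipate the bookkeeping issue but underestimate it --- this Morse/sign-pattern analysis is the bulk of the paper's proof, and a bare citation does not dispatch it.
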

\begin{proof}
We begin by ordering $|\xi|\geq|\xi_1|\geq |\xi_2| \geq |\xi_3|$ which implies $|\xi_1|\sim |\xi|$. For $|\xi|<1$, the multiplier is bounded and the integration over bounded sets is controlled directly. For $|\xi|>1$, we bound the associated multiplier as
\begin{equation}\label{eq:est_multi_nls}
\frac{\langle\xi\rangle|\xi_1|^{\theta-1}|\xi_2| }{\langle \xi_1\rangle\langle \xi_2\rangle\langle \xi_3\rangle}\lesssim \frac{|\xi|^{\theta-1}}{\langle \xi_3\rangle}.
\end{equation}

The 3-descents of the resonance function are of the form
\begin{equation}
\Phi=\xi^2+\lambda_1\xi_1^2+\lambda_2\xi_2^2+\lambda_3\xi_3^2, \quad \lambda_j\in \{\pm 1\}, \quad j=1,2,3.
\end{equation}
We want to estimate
\begin{equation}\label{eq:est1s}
\sup_{\sigma,\alpha,\xi,\xi_2}\int_{\Gamma_\xi^\sigma}\frac{|\xi|^{\theta-1}}{\langle\xi_3\rangle}\mathbbm{1}_{|\Phi-\alpha|<M}d\xi_1\lesssim M^{1^{-}}
\end{equation}
and
\begin{equation}\label{eq:est2s}
\sup_{\sigma,\alpha,\xi_1,\xi_3}\int_{\Gamma_\xi^\sigma}\frac{|\xi|^{\theta-1}}{\langle\xi_3\rangle}\mathbbm{1}_{|\Phi-\alpha|<M}d\xi \lesssim M^{1^{-}},
\end{equation}
where
\begin{equation}
\Gamma_{\xi}^{\sigma}=\{(\xi_1,\xi_2,\xi_3)\in \R^3: \xi+\sigma=\xi_1-\xi_2+\xi_3, \quad \min_{j}|\xi_j|\gtrsim |\sigma|\}.
\end{equation}

\noindent\textbf{Case 1.} $|\xi_3|\ll |\xi|$. We claim that $|\xi|\not\simeq |\xi_2|$. Indeed, if $\xi \simeq \pm \xi_2$, then
\begin{equation}
\xi \simeq \xi_1+\xi_2 \simeq k\xi \qquad \text{for some} \quad k\in\{0,\pm 2\},
\end{equation}
which is impossible. 
For the first estimate \eqref{eq:est1s}, we observe that
\begin{equation}
\left|\frac{\partial\Phi}{\partial \xi_1}\right|=|2\lambda_1\xi_1-2\lambda_3\xi_3|\gtrsim |\xi_1|\sim |\xi|.
\end{equation}
So we can perform the change of variables $\xi_1\mapsto \Phi$ obtaining
\begin{equation}
\begin{split}
\int_{\Gamma_\xi^\sigma}\frac{|\xi|^{\theta-1+0^+}}{\langle\xi_3\rangle}\mathbbm{1}_{|\Phi-\alpha|<M}d\xi_1&\lesssim \int_{\Gamma_\xi^\sigma}\frac{|\xi|^{\theta-2+0^+}}{\langle\xi_3\rangle}\mathbbm{1}_{|\Phi-\alpha|<M}d\Phi\\
& \lesssim M
\end{split}
\end{equation}
for $\theta\in [1,2)$. \eqref{eq:est1s} now follows from Lemma \ref{lem:eta1}.

For the estimate \eqref{eq:est2s}, we have that
\begin{equation}
\left|\frac{\partial\Phi}{\partial \xi}\right|=|2\xi-2\lambda_2\xi_2|\gtrsim |\xi|\sim |\xi_1|
\end{equation}
and we can perform the change of variables $\xi\mapsto \Phi$ analogous to the previous estimate.
\medskip

\noindent\textbf{Case 2.} $|\xi_3|\gtrsim |\xi_1|$. In this case, all frequencies are comparable and we may make suitable permutations whenever necessary. To prove the flexible frequency-restricted estimates, it suffices to check that we are either in a nonstationary case (and then we proceed as in the previous case) or in a nondegenerate stationary point. In the second scenario, for the estimate \eqref{eq:est1s}, for $\xi,\xi_2$ fixed, we can write 
\begin{equation}
\Phi=\xi^2(1+\lambda_1p_1^2+\lambda_2p_2^2+\lambda_3p_3^2)=\xi^2P(p_1),
\end{equation} 
where $p_j=\frac{\xi_j}{\xi}$ for $j\in\{1,2,3\}$ and $p_1+p_2+p_3=1$. We are supposing that $|p_1-p_1^0|\ll 1$ with $\frac{\partial P}{\partial p_1}(p_1^0)=0$ and $\frac{\partial^2 P}{\partial p_1^2}(p_1^0)\neq 0$ so that we can use the Morse lemma in a neighbourhood of $p_1^0$ and we find that there exists a change of coordinates $p_1\mapsto q_1$ such that
\begin{equation}
P(p_1)=P(p_1^0)+\frac{1}{2}\frac{\partial^2P}{\partial p_1^2}(p_1^0)q_1^2
\end{equation}
and we obtain
\begin{equation}
\begin{split}
\int_{\Gamma_\xi^\sigma}\frac{|\xi|^{\theta-1}}{\langle\xi_3\rangle}\mathbbm{1}_{|\Phi-\alpha|<M}d\xi_1&\lesssim \int_{|p_1-p_1^0|\ll 1} |\xi|^{\theta-2}|\xi|\mathbbm{1}_{|\xi^2P-\alpha|<M}dp_1\\
&\lesssim \int_{|q_1|\ll 1} |\xi|^{\theta-1}\mathbbm{1}_{\left|\xi^2\left(P(p_1^0)+\frac{1}{2}\frac{\partial^2P}{\partial p_1^2}q_1^2\right)-\alpha\right|<M}dq_1\\
&\lesssim \int_{|q_1|\ll 1} |\xi|^{\theta-1}\mathbbm{1}_{\left|\left(P(p_1^0)+\frac{1}{2}\frac{\partial^2P}{\partial p_1^2}q_1^2\right)-\frac{\alpha}{\xi^2}\right|<\frac{M}{\xi^2}}dq_1\\
&\lesssim |\xi|^{\theta-1}\sqrt{\frac{M}{\xi^2}}\\
& \lesssim M^{1^-}
\end{split}
\end{equation}
for $\theta\in [1,2)$.

Consequently, all we have to prove in that, up to a possible rearrangements between $\xi_1,\xi_2,\xi_3$ there are no degenerate stationary critical points. We will divide the proof of this according to the different combinations of signs of $\lambda_1, \lambda_2$ and $\lambda_3$.\\
\textbf{Subcase 2.1.} $(+,+,+)$. For $\xi, \xi_2$ fixed, $\frac{\partial^2\Phi}{\partial \xi_1^2}=4$ and for $\xi_1,\xi_3$ fixed, $\frac{\partial^2\Phi}{\partial \xi^2}=4$.\\
\textbf{Subcase 2.2.} $(+,+,-)$. For $\xi_1,\xi_3$ fixed, $\frac{\partial^2\Phi}{\partial \xi^2}=4$. For $\xi, \xi_2$ fixed, $\frac{\partial^2\Phi}{\partial \xi_1^2}=0$ and $\frac{\partial\Phi}{\partial \xi_1}=2\xi_1+2\xi_3$. In this case, we have to consider the region $\xi_1\simeq -\xi_3$.

\textbf{Subcase 2.2.1.} $\xi_2\not\simeq \xi_3$. It suffices to exchange $\xi_1$ and $\xi_2$: for $\xi_2,\xi_3$ fixed, $\frac{\partial^2\Phi}{\partial \xi^2}=4$ and, for $\xi,\xi_1$ fixed, $\frac{\partial\Phi}{\partial \xi_2}=2\xi_2-2\xi_3\neq 0$.

\textbf{Subcase 2.2.2.} $\xi_2\simeq \xi_3$. In this case, $\xi_1\simeq -\xi_2\simeq -\xi_3$ and we exchange  $\xi_2$ and $\xi_3$. With this, for $\xi, \xi_3$ fixed, $\frac{\partial^2\Phi}{\partial \xi_3^2}=4$. Moreover, for $\xi_1,\xi_2$ fixed, $\frac{\partial\Phi}{\partial \xi}=2\xi-2\xi_3$. If $\left|\frac{\partial\Phi}{\partial \xi}\right|\ll |\xi|$, then $\xi\simeq \xi_3$ and
\begin{equation}
\xi+\sigma=\xi_1-\xi_2+\xi_3\simeq -\xi-\xi+\xi=-\xi, \quad \text{this is} \quad \sigma\simeq -2\xi
\end{equation}
which is an avoidable value of $\sigma$ (cf. Remark \ref{remark:avoid}).\\
\textbf{Subcase 2.3.} $(-,+,-)$. For $\xi, \xi_2$ fixed, $\frac{\partial^2\Phi}{\partial \xi_1^2}=-4$ and for $\xi_1,\xi_3$ fixed, $\frac{\partial^2\Phi}{\partial \xi^2}=4$.\\
\noindent\textbf{Subcase 2.4.} $(-,-,-)$. Exchanging $\xi$ with $\xi_3$ and inverting the sign of the resonance function, this case is equivalent to Subcase 2.2. 

\end{proof}

\begin{lemma}\label{lemma:estsch} For $p=2$ and $\theta\in[1,2)$, the flexible-restricted estimate associated with \eqref{eq:aim2_mult} holds.
\end{lemma}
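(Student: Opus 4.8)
The plan is to treat this as the ``easy twin'' of the previous lemma, exactly as the flexible frequency-restricted estimate for the gKdV identity \eqref{eq:aim2_mult} is the easy twin of the one for \eqref{eq:aim1_mult}. The multiplier now carries a factor $\langle\xi\rangle^{-1}$ coming from the $X^{-1}$ target, together with the extra factors $\langle\xi_j\rangle^{-1}$ produced by the higher-degree nonlinearity; these leave so much room that a completely crude bound suffices. I would reduce to the quintilinear base case, the general odd $p$ then following from Corollary~\ref{cor:indk} once one notes that the symbol $m(\xi_1,\dots,\xi_{2p-1})=|\xi_1|^{\theta-1}|\xi_2|$ depends only on the two largest frequencies and is therefore dominated by the corresponding quintic symbol on every $5$-descent of the phase $\Phi$.

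For the base case, I would write the multiplier governing the frequency-restricted estimate as
\begin{equation*}
\mathcal{M}=\frac{|\xi_1|^{\theta-1}\,|\xi_2|}{\langle\xi\rangle\,\prod_{j=1}^{5}\langle\xi_j\rangle},\qquad \xi=\pm\xi_1\pm\xi_2\pm\xi_3\pm\xi_4\pm\xi_5,
\end{equation*}
the signs being dictated by the placement of the complex conjugates. Ordering $|\xi_1|\ge\cdots\ge|\xi_5|$, the binding configuration is $|\xi_1|\sim|\xi_2|$ together with $|\xi|\le|\xi_5|$; every other arrangement ($|\xi|\gtrsim|\xi_1|$, or $|\xi_1|\gg|\xi_2|$) only diminishes $\mathcal{M}$, and the region $|\xi_1|\le1$ is trivial, the corresponding integral running over a bounded set. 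When $|\xi_1|>1$, I would exploit $\theta-1<1$ together with $|\xi_2|\le|\xi_1|$ to absorb the derivatives, $|\xi_1|^{\theta-1}|\xi_2|\lesssim|\xi_1|\langle\xi_1\rangle$, which yields
\begin{equation*}
\mathcal{M}\lesssim\frac{|\xi_1|\,\langle\xi_1\rangle}{\langle\xi\rangle\,\prod_{j=2}^{5}\langle\xi_j\rangle}.
\end{equation*}
The right-hand side is, up to constants, the multiplier governing the classical quintilinear estimate $\|\partial_x(u_1\cdots u_5)\|_{X^{1,b'}}\lesssim\prod_{j=1}^{5}\|u_j\|_{X^{1,b}}$ for \eqref{eq:schrodinger}, whose flexible frequency-restricted estimate is established in \cite{CorLei}. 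This closes the argument for one placement of the conjugates, and since the bound above is insensitive to conjugation while the cited estimate is available for every configuration, repeating it over the finitely many sign patterns finishes the proof.

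I do not expect a genuine obstacle: the point of the lemma is precisely that, in contrast with \eqref{eq:aim1s_multi}, one has to gain no derivatives at all --- in fact one loses $2-\theta>0$ of them --- so none of the stationary-phase/Morse-lemma analysis used in the cubic case is needed, and the crude bound displayed above does everything. The only step deserving (mild) care is the one flagged in the remark preceding Lemma~\ref{lem:multi_nls}: the two largest frequencies may sit on factors carrying different conjugations, which changes $\Phi$, so the reduction must be run through all such configurations; but because the multiplier estimate does not see the conjugations and the classical quintilinear frequency-restricted estimate is available for all of them, this amounts to bookkeeping.
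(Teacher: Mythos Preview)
Your overall strategy is correct and the ``easy twin'' philosophy is exactly right, but you take a different descent from the paper. The paper descends to the \emph{trilinear} level (three frequencies $\xi_1,\xi_2,\xi_3$), writes the multiplier as
\[
\mathcal{M}=\frac{|\xi_1|^{\theta}|\xi_2|}{\langle\xi\rangle\prod_{j=1}^{3}\langle\xi_j\rangle}\lesssim \frac{|\xi_1|^{\theta-1}}{\langle\xi\rangle},
\]
and then observes that this is precisely the bound \eqref{eq:est_multi_nls} from the previous lemma with the roles of the outgoing frequency $\xi$ and the largest incoming frequency $\xi_1$ exchanged (here $\xi$ is the smallest). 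The frequency-restricted analysis of the preceding lemma therefore applies verbatim, making the argument entirely self-contained within the paper.

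You instead descend to the quintilinear level, mirroring the gKdV treatment of \eqref{eq:aim2_mult}. The multiplier bound $|\xi_1|^{\theta-1}|\xi_2|\lesssim|\xi_1|\langle\xi_1\rangle$ is fine, but the estimate you then invoke --- ``$\|\partial_x(u_1\cdots u_5)\|_{X^{1,b'}}\lesssim\prod_j\|u_j\|_{X^{1,b}}$ for \eqref{eq:schrodinger}'' --- is not a canonical Schr\"odinger estimate (the NLS nonlinearity carries no derivative), so the appeal to \cite{CorLei} is at best indirect: one would have to check that the flexible FRE for that specific multiplier is actually proved there for the Schr\"odinger phase and for every conjugation pattern. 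The paper's trilinear reduction sidesteps this entirely by recycling the lemma it has just established. Both routes are viable in principle, but the paper's is shorter and does not rely on locating an external result tailored to a somewhat artificial multiplier.
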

\begin{proof}
In this case, the associate multiplier is 
\begin{equation}
\mathcal{M}=\frac{|\xi_1|^{\theta}|\xi_2| }{\langle\xi\rangle\prod_{j=1}^{3}\langle\xi_j\rangle}.
\end{equation}
In the worst-case scenario $|\xi_1|\ge |\xi_2|\ge |\xi_3|\ge |\xi|$, we  estimate
\begin{equation}
\mathcal{M}=\frac{|\xi_1|^{\theta}|\xi_2| }{\langle\xi\rangle\prod_{j=1}^{3}\langle\xi_j\rangle}\lesssim \frac{|\xi_1|^{\theta-1}}{\jap{\xi}}.
\end{equation}
This corresponds exactly to \eqref{eq:est_multi_nls} and thus the proof follows from that of Lemma \ref{lem:multi_nls}.
\end{proof}

\bibliography{Biblio}
\bibliographystyle{plain}

	\normalsize

\begin{center}

	{\scshape Mikaela Baldasso}\\
{\footnotesize
	Instituto de Matemática, Estatística e Computação Científica,\\
    Departamento de Matemática,\\ 
Universidade Estadual de Campinas\\
Rua Sérgio Buarque de Holanda, 651, 13083-859, Campinas–SP, Brazil\\
	mikaelabaldasso@gmail.com
}

	\bigskip
	{\scshape Simão Correia}\\
	{\footnotesize
		Center for Mathematical Analysis, Geometry and Dynamical Systems,\\
		Department of Mathematics,\\
		Instituto Superior T\'ecnico, Universidade de Lisboa\\
		Av. Rovisco Pais, 1049-001 Lisboa, Portugal\\
		simao.f.correia@tecnico.ulisboa.pt
	}

\end{center}

\end{document}